\documentclass[11pt]{article}

\usepackage[T1]{fontenc}
\usepackage[a4paper,margin=2.55cm]{geometry}
\usepackage{amsmath,amssymb,amsthm,mathtools,mathrsfs}
\usepackage{microtype}
\usepackage{booktabs}
\usepackage[hidelinks]{hyperref}
\usepackage{authblk}

\DeclareMathOperator{\End}{End}
\newtheorem{theorem}{Theorem}[section]
\newtheorem{proposition}[theorem]{Proposition}
\newtheorem{lemma}[theorem]{Lemma}
\newtheorem{corollary}[theorem]{Corollary}

\theoremstyle{definition}
\newtheorem{definition}[theorem]{Definition}

\theoremstyle{remark}
\newtheorem{remark}[theorem]{Remark}

\theoremstyle{plain}

\newcommand{\Wres}{\operatorname{Wres}}
\newcommand{\Wresb}{\widetilde{\operatorname{Wres}}}
\newcommand{\trE}{\operatorname{tr}_{E}}
\newcommand{\Id}{\operatorname{Id}}
\newcommand{\Ric}{\operatorname{Ric}}
\newcommand{\vol}{\operatorname{vol}}
\newcommand{\dd}{\,\mathrm{d}}
\newcommand{\ii}{\mathrm{i}}
\newcommand{\eps}{\varepsilon}
\newcommand{\iotaop}{\iota}
\newcommand{\cG}[1]{c_{g_{#1}}}

\title{The Bimetric generalization of Kastler--Kalau--Walze Type Theorems}

\author[1]{Sining Wei\thanks{ E-mail: \texttt{weisn835@nenu.edu.cn}}}
\author[2]{Yong Wang\thanks{Corresponding author. E-mail: \texttt{wangy581@nenu.edu.cn}}}
\affil[1]{School of Data Science and Artificial Intelligence, Dongbei University of Finance and Economics\\ Dalian, 116025, China}
\affil[2]{School of Mathematics and Statistics, Northeast Normal University, Changchun, 130024, China}

\date{}

\begin{document}
\maketitle
\vspace{-1.2cm}

\begin{abstract}
Let $M^n$ be a closed oriented manifold of even dimension
$n=2m\ge2$, equipped with a smooth metric $g_1$
and a Riemannian metric $g_2$.
We compute the Wodzicki residue of $D_{g_1}^2D_{g_2}^{-n}$ on
the common exterior bundle, where $D_{g_r}=d+\delta_{g_r}$. This is the noncommutative integral of $D_{g_1}^2$ relative
to the reference operator $D_{g_2}$.
Its local density involves the curvatures of the two metrics
and the difference of their Levi--Civita connections.
Integration by parts gives a closed-manifold formula without
explicit derivatives of the connection difference. For compact manifolds with boundary and $n\ge4$, we assume that both metrics are Riemannian and satisfy
$g_r=h_r(x_n)^{-1}g^{\partial M}+dx_n^2$
near the boundary, with $h_r(0)=1$.
We compute the noncommutative residues of two
factorizations involving even and odd powers of $D_{g_2}$.
Their interior contributions coincide, whereas their boundary
terms are explicit multiples of the integral of
$K_{g_1}-K_{g_2}$, with the coefficient for the even
factorization twice that for the odd one.
Here $K_{g_r}$ is the trace of the second fundamental form
with respect to the inward unit normal.
The calculation uses a second-order residue formula and
direct boundary symbol expansions.
When the metrics coincide, the formulas reduce to the
Hodge--de Rham Kastler--Kalau--Walze identity.
\end{abstract}

\noindent\textbf{Keywords.}
Bimetric geometry;
Kastler--Kalau--Walze theorem; Boutet de Monvel algebra;
noncommutative residue.

\medskip
\noindent\textbf{MSC 2020.} 58J42; 53C21; 46L87.

\section{Introduction}

The noncommutative residue relates the symbolic calculus of
elliptic operators to Riemannian curvature.
In the Kastler--Kalau--Walze theorem, the residue of a
negative $(n-2)$-power of a Dirac operator is proportional to the
Einstein--Hilbert action
\cite{Connes1994,KalauWalze1995,Kastler1995,Wodzicki1987}.
The relation with heat coefficients provides another approach
to the calculation of such residues
\cite{Ackermann1996,Gilkey1995}.
For manifolds with boundary, Boutet de Monvel's calculus
\cite{Boutet1971,Grubb1996} provides the corresponding algebra
of boundary operators. Its noncommutative residue contains
a boundary contribution in addition to the interior residue
density \cite{FGLS1996,GrubbSchrohe2001,Schrohe1999}.

In the usual construction, the same metric determines both
the differential operator and the elliptic negative power.
Separating these roles leads to a comparison problem for two
Riemannian metrics on a fixed manifold.
The principal symbols involve the two quadratic forms,
whereas the lower-order symbols also depend on their
Levi--Civita connections.
The resulting residue therefore raises two questions:
which curvature and connection terms occur in the
two-metric formula, and how do they combine to recover the
classical identity when the metrics agree?
A comparison of the scalar principal symbols alone does
not determine these lower-order contributions.

The Hodge--de Rham operator provides a natural setting for
this problem.
For every Riemannian metric $g$, the operator
$D_g=d+\delta_g$ acts on
$E=\Lambda^*T^*M\otimes\mathbb C$, where $\delta_g$ is
the formal adjoint of the exterior differential $d$.
The codifferential and the induced connection depend on
$g$, but the bundle and the exterior differential do not.
Thus two such operators can be composed without choosing
an identification between metric-dependent spinor bundles.

Several earlier results are particularly relevant to the
two-metric problem considered here.
Liu and Wang \cite{LiuWang2026} introduced a bimetric
spectral Einstein--Hilbert action for scalar Laplace
operators and computed it for Lorentz warped products.
Their construction separates the metric of the
second-order insertion from the Riemannian metric
determining the elliptic denominator.
It motivates the corresponding comparison for Hodge
Laplacians, whose lower-order symbols also involve
the induced connections on differential forms.
For a single Riemannian metric,
D\k{a}browski, Sitarz, and Zalecki
\cite{DabrowskiSitarzZalecki2025} computed the spectral
metric and Einstein functionals of the Hodge--Dirac
operator and identified them, up to dimensional
constants, with those of the canonical spin Dirac
operator.
Their normal-coordinate symbol expansions and
exterior-algebra trace identities supply the Hodge
coefficients needed in our interior calculation.
The second-order residue formula recalled by
Bochniak, D\k{a}browski, Sitarz, and Zalecki
\cite[Proposition~2.1]{BDSZ2026} expresses the residue
of a second-order differential insertion against a
negative power of a Laplace-type operator in terms
of its local coefficients.
Applying this formula allows us to keep the
two-metric numerator separate from the Hodge
denominator throughout the calculation.
Related residue functionals have also been studied
for perturbed Hodge--de Rham operators
\cite{WangWangLiu2025} and for a one-form rescaling
of the spin Dirac operator \cite{WangWang2025}.
For manifolds with boundary, Wang developed explicit
residue calculations in non-product collars
\cite{Wang2006Forms,Wang2006Nonproduct} and established
KKW-type formulas for the Dirac and signature
operators \cite{Wang2007}.
Liu, Wu, and Wang \cite{LiuWuWang2022} subsequently
derived Lichnerowicz-type formulas for perturbed
Hodge--de Rham operators and carried out boundary
residue calculations in dimensions four and six.
These works provide the collar construction and
the boundary symbol method used below.
Our boundary problem retains two independent
warping functions and compares two factorizations
of the same interior operator, so that the
dependence on the two metrics and on the
factorization can be examined separately.

Let $n=2m\ge2$. On a closed oriented manifold $M^n$,
equipped with Riemannian metrics $g_1$ and $g_2$, we
consider
\[
\mathcal K_n(g_1\mid g_2)
=
\Wres(D_{g_1}^2D_{g_2}^{-n}).
\]
The first metric determines the differential insertion,
and the second determines the elliptic denominator.
The metric arguments are therefore ordered.
Writing $C=\nabla^{g_1}-\nabla^{g_2}$ for the difference
of the Levi--Civita connections,
Theorem~\ref{thm:local-closed} expresses the residue
density in terms of the scalar curvatures, the Ricci
tensor of $g_2$, the connection difference $C$, and
the covariant derivative of its contraction.
On a closed manifold,
Theorem~\ref{thm:global-closed} removes the derivative
term by integration by parts.
The diagonal case $g_1=g_2$ gives the usual
Hodge--de Rham KKW formula.

We next consider the effect of factorization at the boundary.
Let $M^n$ be compact and oriented, with $n=2m\ge4$.
We assume that the two metrics induce the same
Riemannian metric $g^{\partial M}$ on $\partial M$ and have
the following form in a common collar:
\[
g_r=h_r(x_n)^{-1}g^{\partial M}+dx_n^2,
\qquad
h_r(0)=1,
\qquad r=1,2.
\]
Here each $h_r$ is a positive smooth function of the
inward normal coordinate $x_n$.
This is the two-metric version of the collar used in
\cite[(1.3)]{Wang2007}.
Denoting truncation to $M$ by $\pi^+$ and the
noncommutative residue in the boundary calculus by
$\Wresb$, we study
\begin{align*}
\mathcal R_n^{(2)}(g_1\mid g_2)
&=
\Wresb\!\left[
\pi^+(D_{g_1}^2D_{g_2}^{-2})
\circ
\pi^+(D_{g_2}^{-n+2})
\right],\\
\mathcal R_n^{(1)}(g_1\mid g_2)
&=
\Wresb\!\left[
\pi^+(D_{g_1}^2D_{g_2}^{-1})
\circ
\pi^+(D_{g_2}^{-n+1})
\right].
\end{align*}
We refer to these as the even and odd splittings,
respectively.
Their interior products agree modulo smoothing
operators, but their boundary residues need not
agree because truncation is not multiplicative.
Theorem~\ref{thm:boundary-KKW} gives the same interior
contribution for both splittings and expresses their
singular Green contributions as explicit dimensional
multiples of
$\int_{\partial M}(K_{g_1}-K_{g_2})\dd\vol_{g^{\partial M}}$.
The coefficient for the even splitting is twice
that for the odd splitting.
Here $K_{g_r}$ denotes the trace of the second
fundamental form with respect to the inward unit
normal.
In particular, the boundary contributions vanish
when the two mean curvatures agree.
These conclusions concern the warped collars
specified above.

For the interior calculation, we apply
\cite[Proposition~2.1]{BDSZ2026} with
$O=D_{g_1}^2$ and $L=D_{g_2}^2$.
Both operators are written in the coordinate exterior
frame associated with $g_2$-normal coordinates.
The denominator coefficients are those of the
Hodge Laplacian, whereas the numerator coefficients
are expressed through the connection difference $C$.
For the boundary calculation, we use the local
noncommutative residue formula in
\cite[(2.2.5)]{Wang2007}.
Its order condition leaves five cases for each
factorization.
We retain $h_1'(0)$ and $h_2'(0)$ separately in the
symbol expansions; their difference appears after
the cancellations.
Only one case contributes for the even splitting
and two for the odd splitting.

This paper is organized as follows. Section~2 gives the symbol conventions and exterior
trace identities.
Section~3 derives the interior formula.
Section~4 describes the warped collars and the
boundary symbols, and Section~5 evaluates both
factorizations and proves the boundary theorems.
The appendix records mixed Clifford trace identities
on the common exterior bundle.

\section{Preliminaries}
\label{sec:conventions}

In this section, we introduce the exterior-algebra, curvature, symbol, and residue conventions used below. Both Hodge--de Rham operators are
realized on the same exterior bundle. We also give the fiber-trace
identities needed in the interior and boundary calculations.

\subsection{The common exterior bundle and symbol conventions}

Let $M$ be a smooth oriented manifold of dimension $n$, and let
$g_1$ and $g_2$ be Riemannian metrics on $M$. Set
$E=\Lambda^*T^*M\otimes\mathbb C
=\bigoplus_{p=0}^n\Lambda^pT^*M\otimes\mathbb C$.
The definition of $E$ is independent of either metric, and
$\dim_{\mathbb C}E_x=2^n$ for every $x\in M$.

For local coordinates $(x^1,\ldots,x^n)$, write
$\partial_i=\partial/\partial x^i$ and
$g_{r,ij}=g_r(\partial_i,\partial_j)$, where $r\in\{1,2\}$,
and let $(g_r^{ij})$ be the inverse matrix of $(g_{r,ij})$.
Unless otherwise stated, indices run from $1$ to $n$, and repeated
indices are summed. We write $\dd\vol_{g_r}$ for the Riemannian
volume form and $\Id_E$ for the identity endomorphism of $E$.

Let $\alpha,\beta\in\Omega^1(M)$ and
$X,Y\in\Gamma(TM)$. Exterior multiplication and contraction are
defined by
\[
\eps(\alpha)\omega=\alpha\wedge\omega,
\qquad
\bigl(\iotaop(X)\omega\bigr)(X_1,\ldots,X_{p-1})
=\omega(X,X_1,\ldots,X_{p-1})
\]
for $\omega\in\Omega^p(M)$ with $p\geq1$; contraction is zero on
zero-forms. These operators satisfy the canonical anticommutation
relations
\begin{align}
\eps(\alpha)\eps(\beta)+\eps(\beta)\eps(\alpha)
&=0,\notag\\
\iotaop(X)\iotaop(Y)+\iotaop(Y)\iotaop(X)
&=0,\notag\\
\eps(\alpha)\iotaop(X)+\iotaop(X)\eps(\alpha)
&=\alpha(X)\Id_E.
\label{eq:CAR}
\end{align}

For a Riemannian metric $g$, let $\nabla^g$ denote its Levi--Civita
connection, with Christoffel coefficients determined by
$\nabla^g_{\partial_i}\partial_j
=\Gamma(g)^k{}_{ij}\partial_k$.
We use the curvature convention
\[
R^g(X,Y)Z=\nabla^g_X\nabla^g_YZ
-\nabla^g_Y\nabla^g_XZ
-\nabla^g_{[X,Y]}Z
\]
and write
$R^g_{abcd}
=g(R^g(\partial_a,\partial_b)\partial_d,\partial_c)$.
Accordingly,
$\Ric(g)_{bd}=g^{ac}R^g_{abcd}$ and
$R_g=g^{bd}\Ric(g)_{bd}$.

Choose $g$-normal coordinates
$x=(x^1,\ldots,x^n)$ centered at $x_0\in M$. Thus $x(x_0)=0$,
$g_{ab}(0)=\delta_{ab}$,
$\partial_cg_{ab}(0)=0$, and
$\Gamma(g)^k{}_{ij}(0)=0$.
Here $\delta_{ab}$ is the Euclidean metric at the origin. For
$\xi=\xi_a\,dx^a$, put
$|x|^2=\delta_{ab}x^ax^b$ and
$|\xi|^2=\delta^{ab}\xi_a\xi_b$.
With the curvature convention above, the standard normal-coordinate
expansions \cite[p.~1092]{DabrowskiSitarzZalecki2025} are
\begin{align*}
g_{ab}(x)
&=
\delta_{ab}
-\frac13R^g_{acbd}(x_0)x^cx^d
+O(|x|^3),\\
g^{ab}(x)
&=
\delta^{ab}
+\frac13R^g_{acbd}(x_0)x^cx^d
+O(|x|^3),\\
\sqrt{\det(g_{ab}(x))}
&=
1-\frac16\Ric(g)_{ij}(x_0)x^ix^j
+O(|x|^3).
\end{align*}
In particular,
$g^{ab}(x)\xi_a\xi_b
=(\delta^{ab}+\frac13R^g_{acbd}(x_0)x^cx^d)
\xi_a\xi_b+O(|x|^3|\xi|^2)$.

The contracted Christoffel coefficients satisfy
\[
\Gamma(g)^a{}_{ja}
=\frac12g^{ab}\partial_jg_{ab}
=\partial_j\log\sqrt{\det(g_{ab})}.
\]
The volume-density expansion gives
$\log\sqrt{\det(g_{ab}(x))}
=-\frac16\Ric(g)_{k\ell}(x_0)x^kx^\ell+O(|x|^3)$.
Differentiating twice at the origin and using the symmetry of the
Ricci tensor yields
\begin{equation}\label{eq:contracted-Gamma}
\partial_i\Gamma(g)^a{}_{ja}(0)
=-\frac13\Ric(g)_{ij}(x_0).
\end{equation}

For a vector field $X$, let $X_g^\flat=g(X,\cdot)$ and define
$c_g(X)=\eps(X_g^\flat)-\iotaop(X)$.
Equivalently, for a one-form $\alpha$, set
$c_g(\alpha)=\eps(\alpha)-\iotaop(\alpha^{\sharp_g})$, where
$\alpha^{\sharp_g}$ is determined by
$g(\alpha^{\sharp_g},X)=\alpha(X)$.
Thus, for $r\in\{1,2\}$,
$c_{g_r}(X)=\eps(X_{g_r}^\flat)-\iotaop(X)$ and
$c_{g_r}(\alpha)
=\eps(\alpha)-\iotaop(\alpha^{\sharp_{g_r}})$.
It follows from \eqref{eq:CAR} that
\[c_g(X)c_g(Y)+c_g(Y)c_g(X)
=-2g(X,Y)\Id_E
\]
and
\[c_g(\alpha)c_g(\beta)+c_g(\beta)c_g(\alpha)
=-2g^{-1}(\alpha,\beta)\Id_E.
\]
Let $\delta_{g_r}$ be the formal adjoint of the exterior differential
$d$ with respect to the $L^2$-inner product induced by $g_r$. The
Hodge--de Rham operator
$D_{g_r}=d+\delta_{g_r}$ acts on $C^\infty(M,E)$.
With the convention $D_{x^j}=-\ii\partial_{x^j}$, its principal
symbol satisfies
\[
\sigma_1(D_{g_r})(x,\xi)=\ii\,c_{g_r}(\xi),
\qquad
c_{g_r}(\xi)^2=-|\xi|_{g_r}^2\Id_E,
\qquad
|\xi|_{g_r}^2=g_r^{ij}\xi_i\xi_j.
\]
Consequently, $D_{g_r}^2$ is a Laplace-type operator with scalar
principal symbol
\[
\sigma_2(D_{g_r}^2)(x,\xi)
=|\xi|_{g_r}^2\Id_E.
\]
On a closed manifold, the negative powers of $D_{g_r}$ are defined
by spectral calculus on the orthogonal complement of
$\ker D_{g_r}$ and are set equal to zero on the kernel. Equivalently,
they may be replaced by parametrices modulo smoothing operators.
The two definitions differ by a smoothing operator and hence give
the same noncommutative residue. We denote the ideal of smoothing
operators by $\Psi^{-\infty}$.

If $P$ is a classical pseudodifferential operator of order $q$, write
$$\sigma(P)(x,\xi)\sim\sum_{j=0}^{\infty}
\sigma_{q-j}(P)(x,\xi),$$ where
$\sigma_{q-j}(P)$ is positively homogeneous of degree $q-j$ in
$\xi$ for $|\xi|\geq1$.
For a multi-index
$\alpha=(\alpha_1,\ldots,\alpha_n)\in\mathbb N_0^n$, set
$|\alpha|=\sum_{j=1}^n\alpha_j$,
$\alpha!=\prod_{j=1}^n\alpha_j!$,
$\partial_\xi^\alpha
=\prod_{j=1}^n\partial_{\xi_j}^{\alpha_j}$, and
$\partial_x^\alpha
=\prod_{j=1}^n\partial_{x^j}^{\alpha_j}$.
We use the left composition convention
\begin{equation}
\label{eq:symbol-product}
\sigma(PQ)(x,\xi)
\sim
\sum_{\alpha\in\mathbb N_0^n}
\frac{1}{\alpha!}
\partial_\xi^\alpha\sigma(P)(x,\xi)
D_x^\alpha\sigma(Q)(x,\xi),
\qquad
D_x^\alpha=(-\ii)^{|\alpha|}\partial_x^\alpha.
\end{equation}

Throughout the paper, $\trE$ denotes the ordinary complex fiber trace
on $\End(E_x)$, so that $\trE(\Id_E)=2^n$. We write
$\nu_j=\vol(S^j)$ for the Euclidean volume of the unit sphere $S^j$.
If $M$ is closed and $P$ is a classical pseudodifferential operator
on $E$, our residue convention is
\[
\Wres(P)
=
\int_M
\int_{|\xi|_{g_2}=1}
\trE\bigl(\sigma_{-n}(P)(x,\xi)\bigr)
\dd S_{g_2}(\xi)\,
\dd\vol_{g_2}(x),
\]
where $\sigma_{-n}(P)$ is the homogeneous component of degree $-n$
of the complete symbol of $P$, and $\dd S_{g_2}$ is the hypersurface
measure on the $g_2$-unit cotangent sphere.

\subsection{Exterior-module trace identities}
\label{sec:exterior-traces}

In the coordinate exterior basis, $\eps(dx^i)$ denotes exterior
multiplication by $dx^i$, while $\iotaop(\partial_i)$ denotes
contraction by the coordinate vector field $\partial_i$.

\begin{lemma}[Exterior trace identities]
\label{lem:single-metric-traces}
For $x\in M$, $X,Y\in T_xM$, and a Riemannian metric $g$,
\[
\trE\bigl(c_g(X)c_g(Y)\bigr)=-2^n g(X,Y).
\]
Moreover,
\begin{align}
\trE\bigl(\eps(dx^a)\iotaop(\partial_b)\bigr)
&=2^{n-1}\delta^a{}_b,
\label{eq:lambda2}\\
\trE\bigl(
\eps(dx^a)\iotaop(\partial_b)
\eps(dx^c)\iotaop(\partial_d)
\bigr)
&=
2^{n-2}
\left(
\delta^a{}_b\delta^c{}_d
+\delta^a{}_d\delta^c{}_b
\right).
\label{eq:lambda4}
\end{align}
\end{lemma}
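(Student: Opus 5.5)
The plan is to reduce everything to a product decomposition of the exterior algebra. Fix $x\in M$ and the coordinate basis $dx^1,\dots,dx^n$ of $T_x^*M$. Then
\[
E_x\cong\bigotimes_{j=1}^n\Lambda^*\bigl(\mathbb C\,dx^j\bigr)\cong(\mathbb C^2)^{\otimes n}.
\]
Both $\eps(dx^a)$ and $\iotaop(\partial_b)$ act in a "Jordan--Wigner" form: they act on the $a$-th factor by the elementary matrices $e=\begin{pmatrix}0&0\\1&0\end{pmatrix}$, respectively $\iota=\begin{pmatrix}0&1\\0&0\end{pmatrix}$, and by a diagonal sign operator $\pm1$ (parity) on the factors with smaller index. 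Rather than track signs, I would work directly with the basis $\{dx^I\}$ of $E_x$, indexed by subsets $I\subset\{1,\dots,n\}$, and compute traces as sums of diagonal matrix coefficients.

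\textbf{Identity \eqref{eq:lambda2}.} If $a\neq b$, the operator $\eps(dx^a)\iotaop(\partial_b)$ sends $dx^I$ to a multiple of $dx^{I\setminus\{b\}\cup\{a\}}\ne dx^I$. Its diagonal entries therefore vanish, and so does the trace. If $a=b$, then $\eps(dx^a)\iotaop(\partial_a)$ is the projection onto $\operatorname{span}\{dx^I: a\in I\}$. This subspace has dimension $2^{n-1}$, which gives $2^{n-1}\delta^a{}_b$.

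\textbf{Identity \eqref{eq:lambda4}.} The composite operator maps $dx^I$ to a multiple of a basis element. For a diagonal entry to be nonzero, the multiset of indices created, $\{a,c\}$, must equal the multiset annihilated, $\{b,d\}$. This leaves three cases.
\begin{itemize}
\item[(i)] $a=b$, $c=d$ with $a\neq c$. The operator is the product of two commuting projections $N_a N_c$, where $N_a=\eps(dx^a)\iotaop(\partial_a)$. Its trace counts the subsets containing both $a$ and $c$, namely $2^{n-2}$.
\item[(ii)] $a=d$, $c=b$ with $a\ne c$. Apply \eqref{eq:CAR} to move $\iotaop(\partial_c)$ past $\eps(dx^c)$:
\[
\eps(dx^a)\iotaop(\partial_c)\eps(dx^c)\iotaop(\partial_a)=\eps(dx^a)(1-N_c)\iotaop(\partial_a).
\]
Since $N_c$ commutes with $\eps(dx^a)$ and $\iotaop(\partial_a)$ for $c\neq a$, this equals $(1-N_c)N_a$. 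Its trace counts the subsets with $a\in I$ and $c\notin I$, namely $2^{n-2}$.
\item[(iii)] $a=b=c=d$. Since $N_a^2=N_a$, the trace is $2^{n-1}=2^{n-2}(1+1)$.
\end{itemize}
In all other index configurations the trace vanishes. This matches $2^{n-2}(\delta^a{}_b\delta^c{}_d+\delta^a{}_d\delta^c{}_b)$ in every case.

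\textbf{The first identity.} Expand
\[
c_g(X)c_g(Y)=\eps(X^\flat)\eps(Y^\flat)-\eps(X^\flat)\iotaop(Y)-\iotaop(X)\eps(Y^\flat)+\iotaop(X)\iotaop(Y).
\]
The terms $\eps\eps$ and $\iotaop\iotaop$ shift form degree by $\pm2$, so they are traceless. By linearity, \eqref{eq:lambda2} gives $\trE(\eps(\alpha)\iotaop(Z))=2^{n-1}\alpha(Z)$. Using \eqref{eq:CAR} and cyclicity of the trace, $\trE(\iotaop(X)\eps(Y^\flat))=2^n\,Y^\flat(X)-2^{n-1}Y^\flat(X)=2^{n-1}g(X,Y)$. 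Summing the two mixed terms, each with a minus sign, gives $-2^{n}g(X,Y)$.

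The only delicate point is the sign bookkeeping in case (ii) of \eqref{eq:lambda4}. Reducing that case to projections via \eqref{eq:CAR} avoids explicit sign computations, which is why the argument above takes that route rather than working with the Jordan--Wigner sign factors directly.
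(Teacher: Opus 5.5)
Your proof is correct, but it takes a different route from the paper's.

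\textbf{The paper's route.} The paper never works with the basis $\{dx^I\}$ of $E_x$. It proves the first identity in one line, by tracing the Clifford relation $c_g(X)c_g(Y)+c_g(Y)c_g(X)=-2g(X,Y)\Id_E$ and using cyclicity. It obtains \eqref{eq:lambda2} the same way from \eqref{eq:CAR}. For \eqref{eq:lambda4}, it anticommutes $\iotaop(\partial_b)$ past $\eps(dx^c)$. It then applies cyclicity to $\trE\bigl(\eps(dx^a)\eps(dx^c)\iotaop(\partial_b)\iotaop(\partial_d)\bigr)$, which yields a linear equation expressing that trace through two-factor traces.

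\textbf{Your route.} You use that each $\eps(dx^a)\iotaop(\partial_b)$ is a monomial matrix in the basis $\{dx^I\}$. You identify $N_a=\eps(dx^a)\iotaop(\partial_a)$ as commuting coordinate projections, so the traces become subset counts. You also derive the first identity from \eqref{eq:lambda2} by expansion and a degree-parity argument, rather than from the Clifford relation.

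\textbf{Comparison.} The paper's argument is shorter and needs no case split over coincident indices. It uses only \eqref{eq:CAR}, cyclicity, and $\trE(\Id_E)=2^n$. It is also the same mechanism that drives the recursion in Proposition~\ref{prop:mixed-recursion}. Your argument makes the constants $2^{n-1}$ and $2^{n-2}$ transparent as counts of subsets containing, or omitting, given indices. It needs nothing beyond inspecting diagonal entries; cyclicity, which you invoke for $\trE(\iotaop(X)\eps(Y^\flat))$, is not actually needed there.

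\textbf{One wording fix.} In \eqref{eq:lambda2} with $a\neq b$, the set $(I\setminus\{b\})\cup\{a\}$ can equal $I$, namely when $b\notin I$ and $a\in I$. In that case the coefficient is zero, so your conclusion still holds. The accurate statement is that a nonzero image forces $b\in I$ and $a\notin I$, hence a different basis element. This is exactly the ``created equals annihilated'' criterion you use later for \eqref{eq:lambda4}.
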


\begin{proof}
The Clifford relation and cyclicity of the fiber trace give
\[
2\trE\bigl(c_g(X)c_g(Y)\bigr)
=
-2g(X,Y)\trE(\Id_E)
=
-2^{n+1}g(X,Y),
\]
which proves the first identity.

Applying \eqref{eq:CAR} with
$\alpha=dx^a$ and $X=\partial_b$, and then using cyclicity, gives
$2\trE(\eps(dx^a)\iotaop(\partial_b))
=\delta^a{}_b\trE(\Id_E)$.
This proves \eqref{eq:lambda2}.

For the four-factor trace, the anticommutation relations give
\begin{align*}
&\trE\bigl(
\eps(dx^a)\iotaop(\partial_b)
\eps(dx^c)\iotaop(\partial_d)
\bigr)
=
\delta^c{}_b\,
\trE\bigl(\eps(dx^a)\iotaop(\partial_d)\bigr)
-
\trE\bigl(
\eps(dx^a)\eps(dx^c)
\iotaop(\partial_b)\iotaop(\partial_d)
\bigr),
\end{align*}
while cyclicity and \eqref{eq:CAR} yield
\begin{align*}
&2\trE\bigl(
\eps(dx^a)\eps(dx^c)
\iotaop(\partial_b)\iotaop(\partial_d)
\bigr)
=
\delta^a{}_d\,
\trE\bigl(\eps(dx^c)\iotaop(\partial_b)\bigr)
-
\delta^c{}_d\,
\trE\bigl(\eps(dx^a)\iotaop(\partial_b)\bigr).
\end{align*}
Substitution of \eqref{eq:lambda2} gives \eqref{eq:lambda4}.
\end{proof}

\section{The interior residue}
\label{sec:universal}

We compute the interior residue of $D_{g_1}^2D_{g_2}^{-n}$
by applying a second-order residue formula with denominator
$D_{g_2}^2$. We first evaluate the relevant Hodge coefficients
and then express the numerator in the same coordinate exterior
frame. These calculations give the local bimetric density and
its integral on a closed manifold.

\subsection{The second-order residue formula and the Hodge denominator}
\label{subsec:second-order-residue}

Let $n=2m\geq2$. Choose $g_2$-normal coordinates centered at
$x_0\in M$ and a smooth local frame of $E$. We use the symbol
composition convention \eqref{eq:symbol-product}.

Let $L$ be a Laplace-type operator on $E$ with principal symbol
$g_2^{ab}(x)\xi_a\xi_b\Id_E$. Write
$$\sigma(L)=a_2+a_1+a_0,$$ where $a_j$ is homogeneous of degree
$j$ in $\xi$. Following the coefficient notation of
\cite[(2.1)]{BDSZ2026}, write
\begin{align}
a_2(x,\xi)
&=
g_2^{ab}(x)\xi_a\xi_b\Id_E,
\notag\\
a_1(x,\xi)
&=
\ii\bigl(P_{ab}x^b+S_a\bigr)\xi_a
+o(|x||\xi|),
\notag\\
a_0(x)
&=
Q+o(1).
\label{eq:L-symbol}
\end{align}
The normal-coordinate expansion of $g_2^{ab}$ was recalled
in Section~\ref{sec:conventions}. The remainders in
\eqref{eq:L-symbol} are understood as $x\to0$, uniformly for
$|\xi|=1$. The coefficients
$S_a,P_{ab},Q\in\End(E_{x_0})$ are characterized by
$a_1(0,\xi)=\ii S_a\xi_a$,
$\partial_{x^b}a_1(0,\xi)=\ii P_{ab}\xi_a$, and
$Q=a_0(0)$. They are computed in the chosen local frame;
no symmetry of $P_{ab}$ is assumed.

Let $O$ be a differential operator of order at most two.
At $x_0$, write
\[
\sigma(O)(x_0,\xi)
=
F^{ab}\xi_a\xi_b+\ii G^a\xi_a+H,
\qquad
F^{ab}=F^{ba},
\]
where $F^{ab},G^a,H\in\End(E_{x_0})$. The symmetry of
$F^{ab}$ may be assumed because its antisymmetric part
does not contribute to $F^{ab}\xi_a\xi_b$.
In the following formula, repeated coordinate indices are
contracted with the Euclidean metric at $x_0$, and products
of endomorphisms denote composition in the order written.

We recall the second-order residue formula of
\cite[Proposition~2.1]{BDSZ2026}.

\begin{proposition}[Second-order residue formula]
\label{prop:universal}
Suppose that $M$ is closed and that $L$ and $O$ have the
local symbols described above. Then
\begin{equation}
\label{eq:universal}
\begin{aligned}
\Wres(OL^{-m})
={}&
\frac{\nu_{n-1}}{24}
\int_M
\trE\Bigl[
24H+12G^aS_a
\\
&\quad
+F^{aa}
\bigl(
-12Q+6P_{bb}-2R_{g_2}\Id_E-3S_bS_b
\bigr)
\\
&\quad
+2F^{ab}
\bigl(
-6P_{ab}+2\Ric(g_2)_{ab}\Id_E-3S_aS_b
\bigr)
\Bigr]\dd\vol_{g_2}.
\end{aligned}
\end{equation}
The coefficients in the integrand are evaluated pointwise
in normal coordinates and the chosen local frame.
The negative power $L^{-m}$ is understood modulo smoothing
operators.
\end{proposition}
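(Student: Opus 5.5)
Although the paper cites this formula from \cite[Proposition~2.1]{BDSZ2026}, we would reprove it by a direct symbol computation at the centre $x_0$ of $g_2$-normal coordinates. The residue density at $x_0$ is $\int_{|\xi|=1}\trE\,\sigma_{-n}(OL^{-m})(x_0,\xi)\dd S(\xi)$. Since $\sqrt{\det g_2}(0)=1$, the Euclidean unit sphere at the origin agrees with the $g_2$-unit sphere there. By \eqref{eq:symbol-product} and because $O$ is differential,
\[
\sigma_{-n}(OL^{-m})=\sum_{|\alpha|\le 2}\frac1{\alpha!}\,\partial_\xi^\alpha\sigma(O)\,D_x^\alpha\sigma_{-n-2+|\alpha|-\deg}(L^{-m}),
\]
where $\deg$ is the $\xi$-degree of the chosen homogeneous piece of $\sigma(O)$. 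We therefore need, at $x_0$, the following symbols of $L^{-m}$:
\begin{itemize}
\item $\sigma_{-2m}$, $\sigma_{-2m-1}$, $\sigma_{-2m-2}$;
\item the first $x$-derivatives of $\sigma_{-2m}$ and $\sigma_{-2m-1}$;
\item the second $x$-derivatives of $\sigma_{-2m}$.
\end{itemize}

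First we compute the parametrix $B=L^{-1}$ in the standard way. Its symbols are $b_{-2}=|\xi|_{g_2}^{-2}$ and $b_{-3}=-b_{-2}(a_1b_{-2}+\sum_j\partial_{\xi_j}a_2D_{x_j}b_{-2})$, with the analogous formula for $b_{-4}$. In normal coordinates $\partial_x g_2^{ab}(0)=0$, while $\partial^2_x g_2^{ab}(0)$ is given by $\tfrac13 R$ from the expansion recalled earlier.

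Next, rather than iterating to $L^{-m}$, we use that $a_2$ is scalar. Write $\sigma(L^{-m})$ through the resolvent, $\frac{1}{2\pi i}\int\lambda^{-m}(L-\lambda)^{-1}d\lambda$, or equivalently through the Dunford formula with parameter-dependent parametrix symbols $r_{-2}=(|\xi|^2_{g_2}-\lambda)^{-1}$, $r_{-3}$, $r_{-4}$. Since $a_2$ is scalar, all $\lambda$-integrals reduce to derivatives of $\lambda^{-m}$. They give expressions like $|\xi|^{-2m-2j}$ times polynomials in $\xi$, with coefficients that are products of $S_a$, $P_{ab}$, $Q$ and $\partial^2 g_2$.

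Then we substitute $F^{ab}\xi_a\xi_b+\ii G^a\xi_a+H$ and organise by type. The term $H\,\sigma_{-2m}$ gives $24H$ after normalisation. The terms of the form $G^a$ times $r_{-3}$ and $\partial_\xi$ cross terms give $12G^aS_a$. Here the Euclidean sphere averages $\int\xi_a\xi_b\,dS=\nu_{n-1}\delta_{ab}/n$ and $\int\xi_a\xi_b\xi_c\xi_d\,dS=\nu_{n-1}(\delta\delta+\delta\delta+\delta\delta)/(n(n+2))$ convert the $\xi$-polynomials into contractions. The $F$-terms collect four contributions:
\begin{itemize}
\item $Q$ and $S_bS_b$ from $r_{-4}$;
\item $P_{bb}$ and $P_{ab}$ from $\partial_x a_1$ inside $r_{-4}$;
\item curvature from $\partial_x^2 g_2^{ab}$ in both $r_{-4}$ and the $\partial_\xi^2 F\cdot D_x^2\sigma_{-2m}$ term;
\item the Jacobian factor $-\tfrac16\Ric$.
\end{itemize}
The Jacobian factor enters because the integral is against $\dd\vol_{g_2}$ at $x_0$, where it equals $1$; it matters only through the normalisation of the sphere, so we would check this carefully.

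The main obstacle is the $F$-terms. There the $m$-dependence must cancel, since the final coefficients $-12,6,-2,-3,2$ carry no $n$. This cancellation comes from combining the factors $m(m+1)$ from derivatives of $\lambda^{-m}$ with the $1/(n(n+2))$ in the four-fold sphere average. Noncommutativity also matters: $S_a$, $P_{ab}$ and $F^{ab}$ need not commute, so the order of products must be tracked. The trace's cyclicity lets us move $F$ to the front, which explains the ordering $F^{ab}(\cdots)$ in \eqref{eq:universal}. As a check, we would verify the result against the case $O=L$: then $\Wres(L^{1-m})$ must reproduce the known $a_{n-2}$ heat coefficient, proportional to $\int\trE(\tfrac16R_{g_2}-E)$.
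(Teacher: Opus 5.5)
The paper does not prove this proposition. It quotes it from Proposition~2.1 of Bochniak, D\k{a}browski, Sitarz and Zalecki, and then extracts the pointwise version by the cutoff argument with $\chi O$. Your direct symbol computation at the centre of $g_2$-normal coordinates is therefore a genuinely different, self-contained route, and it goes through.

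It gives the residue density at $x_0$ directly, so the cutoff step is no longer needed. It also checks that the coefficients hold in the conventions fixed in Section~2 ($D_x=-\ii\partial_x$, left composition, $g_2^{ab}=\delta^{ab}+\frac13R_{acbd}x^cx^d$); the citation buys brevity but leaves that matching to trust.

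Concretely, your resolvent recursion gives at $x_0$
\[
\sigma_{-2m-1}(L^{-m})=-\ii m|\xi|^{-2m-2}S_a\xi_a,
\qquad
\partial_{x^c}\sigma_{-2m-1}(L^{-m})=-\ii m|\xi|^{-2m-2}P_{ac}\xi_a,
\]
\[
\sigma_{-2m-2}(L^{-m})=\tfrac{m(m+1)}{2}|\xi|^{-2m-4}\bigl(-S_aS_b+2P_{ab}-\tfrac23\Ric(g_2)_{ab}\Id_E\bigr)\xi_a\xi_b-m|\xi|^{-2m-2}Q.
\]
The metric term in $\partial_x\sigma_{-2m-1}$ drops out because $\xi_j\xi_a\xi_b\,\partial_c\partial_jg_2^{ab}(0)=0$ by the curvature antisymmetries. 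In units of $\nu_{n-1}/24$, the surviving terms contribute:
\begin{itemize}
\item $24H$ and $12G^aS_a$;
\item $F^{aa}(-12Q+6P_{bb}-2R_{g_2}-3S_bS_b)+2F^{ab}(6P_{ab}-2\Ric(g_2)_{ab}-3S_aS_b)$ from $F^{ab}\xi_a\xi_b\,\sigma_{-2m-2}$;
\item $-24F^{ab}P_{ab}$ from $\partial_\xi(F\xi\xi)\,D_x\sigma_{-2m-1}$;
\item $+8F^{ab}\Ric(g_2)_{ab}$ from the $|\alpha|=2$ term.
\end{itemize}
These sum exactly to the stated bracket. The $m$-independence is the identity $\frac{m(m+1)}{2}\cdot\frac{1}{n(n+2)}=\frac18$ that you anticipated.

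Some of your bookkeeping needs correcting.

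\emph{No Jacobian term.} At $x_0$ one has $g_2^{ab}=\delta^{ab}$ (this, not $\det g_2=1$, is what identifies the two unit spheres) and $\sqrt{\det g_2}=1$. The density involves no $x$-derivative of the volume, and any Christoffel terms of $L$ are already encoded in $P_{ab}$. All $R_{g_2}$ and $\Ric(g_2)$ terms come from $\partial^2g_2^{ab}$. They enter through the $-\frac23\Ric$ inside $\sigma_{-2m-2}$, which comes from $\frac12\partial_\xi^2a_2\,D_x^2r_{-2}$ in the recursion, and through the $|\alpha|=2$ composition term. Inserting a $-\frac16\Ric$ Jacobian would spoil the coefficient $2F^{ab}\cdot 2\Ric(g_2)_{ab}$.

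\emph{Order shift.} Your displayed composition formula is off by two in the order. It should involve $\sigma_{-n-\deg+|\alpha|}(L^{-m})$, although your list of required symbols is correct.

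\emph{Minor points.} The $P_{ab}$ terms come from two places, $+12F^{ab}P_{ab}$ and $-24F^{ab}P_{ab}$. The cross term $\partial_\xi(G\xi)\,D_x\sigma_{-2m}$ vanishes at $x_0$. No cyclicity is needed: $F$, $G$, $H$ stand on the left by the composition rule, so with $F^{ab}=F^{ba}$ the identity already holds before taking $\trE$.
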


We also use \eqref{eq:universal} as a local identity.
Let $\chi$ be a smooth scalar function supported in a
coordinate neighborhood. Left multiplication gives
$\sigma(\chi O)=\chi\,\sigma(O)$ and
\[
\sigma_{-n}\bigl((\chi O)L^{-m}\bigr)
=
\chi\,\sigma_{-n}(OL^{-m}).
\]
Applying \eqref{eq:universal} to $\chi O$ therefore multiplies
both integrands by $\chi$. Since $\chi$ is arbitrary, the
traced sphere integral of $\sigma_{-n}(OL^{-m})$ equals
pointwise $\nu_{n-1}/24$ times the trace of the bracket in
\eqref{eq:universal}. By locality of the symbol calculation,
the same identity gives the interior density on a manifold
with boundary.

We now take $L=D_{g_2}^2$ and use the coordinate exterior
frame. The following identities follow from the Hodge symbol
calculation in
\cite[Lemma~2.1 and the proof of Proposition~2.3]
{DabrowskiSitarzZalecki2025}, together with the exterior
trace identities \eqref{eq:lambda2} and \eqref{eq:lambda4}.

\begin{lemma}[Traced Hodge coefficients]
\label{lem:Hodge-coeff}
For $L=D_{g_2}^2$ in the coordinate exterior frame of
$g_2$-normal coordinates centered at $x_0$, the coefficients
in \eqref{eq:L-symbol} satisfy $S_a=0$ and
\[
\trE P_{ab}
=
\frac{2^n}{3}\Ric(g_2)_{ab},
\qquad
\trE P_{aa}
=
\frac{2^n}{3}R_{g_2},
\qquad
\trE Q
=
\frac{2^{n-2}}{3}R_{g_2}.
\]
All quantities are evaluated at $x_0$.
\end{lemma}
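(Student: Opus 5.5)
The plan is to write $L=D_{g_2}^2$ in the coordinate exterior frame through the Weitzenböck formula. Then read off $a_1$ and $a_0$ in $g_2$-normal coordinates, differentiate once at the origin, and trace with Lemma~\ref{lem:single-metric-traces}. This reproduces, in our sign conventions, the Hodge symbol computation of \cite[Lemma~2.1]{DabrowskiSitarzZalecki2025}.

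\textbf{Step 1: the connection and the symbol of $L$.} In the frame $dx^{i_1}\wedge\cdots\wedge dx^{i_p}$, the Levi--Civita connection of $g_2$ on $E$ is $\nabla_{\partial_i}=\partial_i+\omega_i$ with
\[
\omega_i=-\Gamma(g_2)^k{}_{ij}\,\eps(dx^j)\iotaop(\partial_k).
\]
The Weitzenböck formula reads $D_{g_2}^2=\nabla^*\nabla+\mathcal R$, where
\[
\nabla^*\nabla=-g_2^{ij}\bigl(\nabla_{\partial_i}\nabla_{\partial_j}-\Gamma(g_2)^k{}_{ij}\nabla_{\partial_k}\bigr)
\]
and $\mathcal R$ is the curvature endomorphism, a quadratic expression $\pm R^{g_2}_{abcd}\,\eps(dx^a)\iotaop(\partial_b)\eps(dx^c)\iotaop(\partial_d)$ plus a Ricci-type term $\pm\Ric(g_2)_{ab}\eps(dx^a)\iotaop(\partial_b)$; I would fix the signs by checking against $\Delta$ on one-forms. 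Expanding $\nabla^*\nabla$ and replacing $\partial_j$ by $\ii\xi_j$ gives
\[
a_1=-\ii\, g_2^{ij}\bigl(2\omega_i-\Gamma(g_2)^k{}_{ij}\delta_{\cdot k}\bigr)\xi_\cdot
\]
with the index pattern fixed by the expansion, and
\[
a_0=-g_2^{ij}\bigl(\partial_i\omega_j+\omega_i\omega_j-\Gamma(g_2)^k{}_{ij}\omega_k\bigr)+\mathcal R .
\]

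\textbf{Step 2: $S_a$ and $P_{ab}$.} Since $\Gamma(g_2)(0)=0$ and $\omega(0)=0$, we get $a_1(0,\xi)=0$, so $S_a=0$. Next, $P_{ab}$ is $\partial_{x^b}$ of the coefficient of $\ii\xi_a$ at $0$. I would use the standard normal-coordinate identity
\[
\partial_b\Gamma^a{}_{ij}(0)=-\tfrac13\bigl(R_{aibj}+R_{ajbi}\bigr)
\]
(signs adapted to the convention of Section~\ref{sec:conventions}), together with \eqref{eq:contracted-Gamma}. Tracing with \eqref{eq:lambda2} gives
\[
\trE\partial_b\omega_a(0)=-2^{n-1}\partial_b\Gamma^k{}_{ak}(0)=\tfrac{2^{n-1}}3\Ric(g_2)_{ab}.
\]
The scalar term $\partial_b\bigl(g_2^{ij}\Gamma^a{}_{ij}\bigr)(0)=\partial_b\Gamma^a{}_{ii}(0)$ is a multiple of $\Ric(g_2)_{ab}$, namely $\pm\tfrac23\Ric(g_2)_{ab}$ in one convention, and it is multiplied by $\trE\Id_E=2^n$. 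Adding the two contributions should give $\tfrac{2^n}{3}\Ric(g_2)_{ab}$. Contracting $a=b$ then gives the stated $\trE P_{aa}$.

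\textbf{Step 3: $Q$.} At the origin only $-\partial_i\omega_i$ and $\mathcal R$ survive. The first contributes
\[
\Gamma\text{-derivatives}\cdot\trE\bigl(\eps\iotaop\bigr),
\]
that is, $2^{n-1}$ times a combination of $\partial_i\Gamma^k{}_{ik}(0)$ and $\partial_i\Gamma^k{}_{ii}(0)$, hence a multiple of $2^{n-1}R_{g_2}$. The second is traced using \eqref{eq:lambda4} and \eqref{eq:lambda2}. In \eqref{eq:lambda4}, the $\delta^a{}_b\delta^c{}_d$ term gives $R_{aacc}=0$ and the other term gives $\pm R_{g_2}$, so this contribution is $\pm2^{n-2}R_{g_2}$ plus the Ricci part $\pm2^{n-1}R_{g_2}$. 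The cancellation should leave $\tfrac{2^{n-2}}{3}R_{g_2}$.

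\textbf{Main obstacle.} The main difficulty is the bookkeeping of signs. This covers the curvature convention $R_{abcd}=g(R(\partial_a,\partial_b)\partial_d,\partial_c)$, the sign of $\omega_i$ in the coordinate exterior frame (the dual action on forms), the $-\ii\partial$ symbol convention, and the Weitzenböck signs. I would control these by cross-checking against two invariant facts. First, $\int\trE a_0$ must match the known heat-coefficient trace $\trE(\tfrac16R-\mathcal R)$ combination. Second, the final formula must give the classical Hodge KKW value when inserted into \eqref{eq:universal} with $O=L$.
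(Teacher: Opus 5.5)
Your route is correct, but it is not the paper's. The paper imports the full endomorphism-valued $P_{ab}$ and $Q$ from \cite{DabrowskiSitarzZalecki2025} and only then traces them with \eqref{eq:lambda2} and \eqref{eq:lambda4}. You instead read $a_1$ and $a_0$ off the Weitzenb\"ock expansion in the coordinate exterior frame and trace immediately. You therefore need only contracted Christoffel derivatives and $\trE\mathcal R_{g_2}$, never the untraced coefficients. This is the same expansion the paper later applies to the numerator. For $Q$ it is literally Proposition~\ref{prop:trH} with $g_1=g_2$, so $C=0$ and $\trE Q=2^{n-2}R_{g_2}-\frac{2^{n-1}}{3}R_{g_2}=\frac{2^{n-2}}{3}R_{g_2}$. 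Your version makes the lemma self-contained and removes any dependence on matching external sign conventions. The paper's citation supplies more, namely the explicit endomorphisms, but the lemma does not use them.

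The signs you left open are all fixed by Section~\ref{sec:conventions}.

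From the stated expansion of $g_{ab}$,
$\partial_b\Gamma(g_2)^a{}_{ii}(0)=\partial_b\partial_ig_{ia}(0)-\frac12\partial_b\partial_ag_{ii}(0)=\frac13\Ric(g_2)_{ab}+\frac13\Ric(g_2)_{ab}$, with a plus sign. Hence $\trE P_{ab}=-2\cdot\frac{2^{n-1}}{3}\Ric(g_2)_{ab}+2^n\cdot\frac23\Ric(g_2)_{ab}=\frac{2^n}{3}\Ric(g_2)_{ab}$.

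In Step~3 only $\partial_i\Gamma(g_2)^k{}_{ik}$ occurs, not $\partial_i\Gamma(g_2)^k{}_{ii}$, because $\trE\omega_i=-2^{n-1}\Gamma(g_2)^k{}_{ik}$ identically. Then \eqref{eq:contracted-Gamma} gives $\trE(-\partial_i\omega_i)(0)=-\frac{2^{n-1}}{3}R_{g_2}$.

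For $\trE\mathcal R_{g_2}=2^{n-2}R_{g_2}$, use the degree count behind \eqref{eq:trace-curvature}. It is cleaner than expanding $\mathcal R$ in $\eps\iotaop$ products with undetermined signs. Your schematic ``quadratic plus Ricci'' form, with the quadratic part written in $\eps\iotaop\eps\iotaop$ order, double counts the Ricci contribution unless you fix the ordering.

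Of your two cross-checks, the heat-coefficient one is not independent. At the centre, $\trE Q=-\trE(\frac16R_{g_2}\Id_E-\mathcal R_{g_2})$ merely restates the $\partial_i\omega_i$ computation and does not test $\trE\mathcal R_{g_2}$. The KKW check gives only the single relation $6\trE P_{aa}-12\trE Q=2^nR_{g_2}$, so it cannot fix both traces by itself.
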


\begin{proof}
By \cite[Lemma~2.1]{DabrowskiSitarzZalecki2025}, the
first-order Hodge symbol in the coordinate exterior frame
has the expansion
\[
a_1(x,\xi)
=
\ii\left[
\frac23\Ric(g_2)_{ab}\Id_E
-\frac23\eps(dx^k)\iotaop(\partial_\ell)
\bigl(
R^{g_2}_{\ell kab}+R^{g_2}_{\ell akb}
\bigr)
\right]x^b\xi_a
+o(|x||\xi|),
\]
where the curvature components are evaluated at $x_0$.
In particular, $a_1(0,\xi)=0$ for every $\xi$.
On the other hand, \eqref{eq:L-symbol} gives
$a_1(0,\xi)=\ii S_a\xi_a$. Since the components $\xi_a$
can be chosen independently, $S_a=0$ for every $a$.

Comparing the coefficients of $\ii x^b\xi_a$ determines
$P_{ab}$. The zero-order symbol calculation in
\cite[proof of Proposition~2.3]{DabrowskiSitarzZalecki2025}
gives $Q$. Thus
\begin{align*}
P_{ab}
&=
\frac23\Ric(g_2)_{ab}\Id_E
-\frac23\eps(dx^k)\iotaop(\partial_\ell)
\bigl(
R^{g_2}_{\ell kab}+R^{g_2}_{\ell akb}
\bigr),
\\
Q
&=
-\frac13
\eps(dx^p)\iotaop(\partial_q)
\eps(dx^k)\iotaop(\partial_\ell)
\bigl(
R^{g_2}_{\ell qkp}+R^{g_2}_{\ell kqp}
\bigr).
\end{align*}

Taking the trace of $P_{ab}$ and using
\eqref{eq:lambda2}, we obtain
\begin{align*}
\trE P_{ab}
&=
\frac{2^{n+1}}3\Ric(g_2)_{ab}
-\frac{2^n}{3}
\sum_k
\bigl(
R^{g_2}_{kkab}+R^{g_2}_{kakb}
\bigr)
\\
&=
\frac{2^{n+1}}3\Ric(g_2)_{ab}
-\frac{2^n}{3}\Ric(g_2)_{ab}
\\
&=
\frac{2^n}{3}\Ric(g_2)_{ab}.
\end{align*}
Here $R^{g_2}_{kkab}=0$ by antisymmetry in the first two
curvature indices, and
$\sum_kR^{g_2}_{kakb}=\Ric(g_2)_{ab}$ by the definition
of the Ricci tensor. Contracting with $\delta^{ab}$ gives
$\trE P_{aa}=2^nR_{g_2}/3$.

For the zero-order coefficient, \eqref{eq:lambda4} yields
\begin{align*}
\trE Q
&=
-\frac{2^{n-2}}3
\bigl(
\delta^p{}_q\delta^k{}_\ell
+\delta^p{}_\ell\delta^k{}_q
\bigr)
\bigl(
R^{g_2}_{\ell qkp}+R^{g_2}_{\ell kqp}
\bigr)
\\
&=
-\frac{2^{n-2}}3
\sum_{p,k}
\bigl(
R^{g_2}_{kpkp}
+R^{g_2}_{kkpp}
+2R^{g_2}_{pkkp}
\bigr)
\\
&=
-\frac{2^{n-2}}3
\bigl(R_{g_2}-2R_{g_2}\bigr)
\\
&=
\frac{2^{n-2}}3R_{g_2}.
\end{align*}
Indeed, the curvature symmetries give
$R^{g_2}_{kkpp}=0$ and
$R^{g_2}_{pkkp}=-R^{g_2}_{pkpk}$, while
$\sum_{p,k}R^{g_2}_{kpkp}=R_{g_2}$.
\end{proof}

When the principal symbol of $O$ is scalar, these trace
identities simplify \eqref{eq:universal} further.
The resulting formula is the scalar-principal-symbol case of
\cite[Lemma~3.5]{DabrowskiSitarzZalecki2025}.

\begin{proposition}[Reduction formula for the Hodge denominator]
\label{prop:compression}
Let $O$ be a second-order differential operator on $E$
with scalar principal symbol. At $x_0$, write its symbol in
$g_2$-normal coordinates and the coordinate exterior frame as
\[
\sigma(O)(x_0,\xi)
=
f^{ab}\xi_a\xi_b\Id_E+\ii G^a\xi_a+H,
\qquad
f^{ab}=f^{ba}.
\]
Here $f^{ab}$ are the scalar components of the symmetric
contravariant two-tensor determined by the principal symbol.
Its contraction with $g_2$ is $g_{2,ab}f^{ab}$, which equals
$\delta_{ab}f^{ab}$ at $x_0$. Then
\begin{equation}
\label{eq:Hodge-denominator-reduction}
\begin{aligned}
&\int_{|\xi|_{g_2}=1}
\trE\bigl(
\sigma_{-n}(OD_{g_2}^{-n})(x_0,\xi)
\bigr)\dd S_{g_2}(\xi)
=
\nu_{n-1}
\left[
\trE H
-\frac{2^n}{24}R_{g_2}\,g_{2,ab}f^{ab}
\right].
\end{aligned}
\end{equation}
All quantities on the right are evaluated at $x_0$.
If $M$ is closed, integration against $\dd\vol_{g_2}$
gives $\Wres(OD_{g_2}^{-n})$.
\end{proposition}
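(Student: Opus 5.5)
The plan is to specialize the pointwise (local) form of Proposition~\ref{prop:universal} to $L=D_{g_2}^2$, so that $L^{-m}=D_{g_2}^{-n}$ modulo $\Psi^{-\infty}$, and to the given $O$. Since $O$ has scalar principal symbol we take $F^{ab}=f^{ab}\Id_E$. By the localization remark after Proposition~\ref{prop:universal}, the traced sphere integral of $\sigma_{-n}(OD_{g_2}^{-n})$ at $x_0$ equals $\frac{\nu_{n-1}}{24}$ times the fiber trace of the bracket in \eqref{eq:universal}. So the whole argument is an evaluation of that trace using Lemma~\ref{lem:Hodge-coeff}.

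First, Lemma~\ref{lem:Hodge-coeff} gives $S_a=0$. This kills the terms $12G^aS_a$, $-3S_bS_b$ and $-3S_aS_b$. In particular the first-order coefficient $G^a$ of $O$ drops out entirely, which is why it does not appear in \eqref{eq:Hodge-denominator-reduction}. What remains is
\[
\trE\bigl[24H\bigr]
+f^{aa}\bigl(-12\trE Q+6\trE P_{bb}-2^{n+1}R_{g_2}\bigr)
+2f^{ab}\bigl(-6\trE P_{ab}+2^{n+1}\Ric(g_2)_{ab}\bigr).
\]
Because $f^{ab}$ is scalar, it commutes past the trace, so only the traces $\trE Q$, $\trE P_{aa}$ and $\trE P_{ab}$ are needed.

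Next, insert the values from Lemma~\ref{lem:Hodge-coeff}. For the $f^{aa}$ coefficient,
\[
-12\cdot\tfrac{2^{n-2}}{3}R_{g_2}+6\cdot\tfrac{2^n}{3}R_{g_2}-2^{n+1}R_{g_2}
=(-2^n+2^{n+1}-2^{n+1})R_{g_2}=-2^nR_{g_2}.
\]
For the $f^{ab}$ coefficient,
\[
2\bigl(-6\cdot\tfrac{2^n}{3}+2^{n+1}\bigr)\Ric(g_2)_{ab}=0.
\]
Hence the bracket traces to $24\,\trE H-2^nR_{g_2}f^{aa}$. Multiplying by $\nu_{n-1}/24$ gives $\nu_{n-1}\bigl[\trE H-\frac{2^n}{24}R_{g_2}f^{aa}\bigr]$. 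At $x_0$ we have $f^{aa}=\delta_{ab}f^{ab}=g_{2,ab}f^{ab}$, since $g_2$-normal coordinates satisfy $g_{2,ab}(0)=\delta_{ab}$; this is \eqref{eq:Hodge-denominator-reduction}. The closed-manifold statement then follows by integrating against $\dd\vol_{g_2}$, using the residue convention of Section~\ref{sec:conventions}.

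The main obstacle is not the arithmetic but the justification of the pointwise use of \eqref{eq:universal} in this setting. That use has three ingredients. The expansion of $\sigma(D_{g_2}^2)$ must match \eqref{eq:L-symbol} in the coordinate exterior frame. The pointwise $Q$, $P_{ab}$ and $H$ are frame dependent, but their traces are not. The right-hand side must be coordinate invariant, since $g_{2,ab}f^{ab}$ and $R_{g_2}$ are scalars and $\trE H$ is evaluated in the frame where the formula holds. All three are supplied by the preceding remark and by Lemma~\ref{lem:Hodge-coeff}, so I would simply cite them and not redo them.
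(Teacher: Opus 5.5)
Your proof is correct and is essentially the paper's argument: you apply the local form of \eqref{eq:universal} with $L=D_{g_2}^2$ and $F^{ab}=f^{ab}\Id_E$, use $S_a=0$, and substitute the traces from Lemma~\ref{lem:Hodge-coeff} to obtain $-2^nR_{g_2}$ for the $f^{aa}$ coefficient and $0$ for the $f^{ab}$ coefficient. One small caution: your remark that the traces of $Q$, $P_{ab}$ and $H$ are frame independent does not hold for $x$-dependent changes of frame, since a gauge change $A$ shifts $\trE H$ by terms involving $\partial A$ and $\partial^2A$. The remark is not needed, however, because everything is computed consistently in the single coordinate exterior frame of $g_2$-normal coordinates, as the statement prescribes.
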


\begin{proof}
Apply the local form of \eqref{eq:universal} with
$L=D_{g_2}^2$ and $F^{ab}=f^{ab}\Id_E$.
By Lemma~\ref{lem:Hodge-coeff}, $S_a=0$.
The two remaining denominator terms satisfy
\begin{align*}
\trE\bigl(
-12Q+6P_{bb}-2R_{g_2}\Id_E
\bigr)
&=
-12\frac{2^{n-2}}3R_{g_2}
+6\frac{2^n}3R_{g_2}
-2^{n+1}R_{g_2}
\\
&=
-2^nR_{g_2},
\\
\trE\bigl(
-6P_{ab}+2\Ric(g_2)_{ab}\Id_E
\bigr)
&=
-6\frac{2^n}3\Ric(g_2)_{ab}
+2^{n+1}\Ric(g_2)_{ab}
\\
&=
0.
\end{align*}
Since $f^{ab}$ is scalar, it can be taken outside the
fiber trace. Substituting these identities into the local
form of \eqref{eq:universal} gives
\[
\int_{|\xi|_{g_2}=1}
\trE\bigl(
\sigma_{-n}(OD_{g_2}^{-n})(x_0,\xi)
\bigr)\dd S_{g_2}(\xi)
=
\frac{\nu_{n-1}}{24}
\left[
24\trE H
-2^nR_{g_2}\delta_{ab}f^{ab}
\right].
\]
Since $g_{2,ab}(x_0)=\delta_{ab}$, this is
\eqref{eq:Hodge-denominator-reduction}.
\end{proof}

For the numerator $O=D_{g_1}^2$, one has $f^{ab}=g_1^{ab}$.
It remains to compute $\trE H$ in the coordinate exterior
frame associated with $g_2$-normal coordinates.

\subsection{The numerator in \texorpdfstring{$g_2$}{g2}-normal coordinates}
\label{sec:numerator}

We express the numerator $D_{g_1}^2$ in the coordinate exterior
frame associated with $g_2$-normal coordinates. The difference
between the two Levi--Civita connections determines the
connection terms in its symbol.

For smooth vector fields $X,Y$, define
$C(X,Y)=\nabla_X^{g_1}Y-\nabla_X^{g_2}Y$.
The difference of two affine connections is tensorial in both
arguments, so
$C\in\Gamma(T^*M\otimes T^*M\otimes TM)$.
Writing
$C(\partial_i,\partial_j)=C^k{}_{ij}\partial_k$,
we obtain
\begin{equation}
\label{eq:C-components}
C^k{}_{ij}
=
\Gamma(g_1)^k{}_{ij}
-
\Gamma(g_2)^k{}_{ij}.
\end{equation}
We denote its contraction by $c_i=C^k{}_{ik}$.
These components define the one-form $c_i\,dx^i$.

The following form of the Koszul formula expresses the
connection difference in terms of the covariant derivative
of $g_1$ with respect to $\nabla^{g_2}$.

\begin{proposition}[Metric expression for the connection difference]
\label{prop:C-metric}
For all smooth vector fields $X,Y,Z$,
\begin{equation}
\label{eq:C-intrinsic}
\begin{aligned}
2g_1\bigl(C(X,Y),Z\bigr)
={}&
(\nabla_X^{g_2}g_1)(Y,Z)
+
(\nabla_Y^{g_2}g_1)(X,Z)
-
(\nabla_Z^{g_2}g_1)(X,Y).
\end{aligned}
\end{equation}
Consequently, in local coordinates,
\begin{equation}
\label{eq:C-metric}
C^k{}_{ij}
=
\frac12g_1^{k\ell}
\left(
\nabla_i^{g_2}g_{1,j\ell}
+
\nabla_j^{g_2}g_{1,i\ell}
-
\nabla_\ell^{g_2}g_{1,ij}
\right).
\end{equation}
\end{proposition}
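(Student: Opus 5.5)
The plan is to reproduce the Koszul-formula derivation, expressing everything through $\nabla^{g_2}$ rather than through coordinate derivatives. The key inputs are that $\nabla^{g_1}$ is torsion-free and $g_1$-parallel, and that $\nabla^{g_2}$ is torsion-free.

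First, since both connections are torsion-free, the tensor $C$ is symmetric:
\[
C(X,Y)-C(Y,X)=\bigl(\nabla^{g_1}_XY-\nabla^{g_1}_YX\bigr)-\bigl(\nabla^{g_2}_XY-\nabla^{g_2}_YX\bigr)=[X,Y]-[X,Y]=0 .
\]

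Next, I differentiate $g_1$ with respect to $\nabla^{g_2}$. Metric compatibility of $\nabla^{g_1}$ gives $X\bigl(g_1(Y,Z)\bigr)=g_1(\nabla^{g_1}_XY,Z)+g_1(Y,\nabla^{g_1}_XZ)$. Substituting $\nabla^{g_1}=\nabla^{g_2}+C$ yields
\[
(\nabla^{g_2}_Xg_1)(Y,Z)=g_1\bigl(C(X,Y),Z\bigr)+g_1\bigl(Y,C(X,Z)\bigr).
\]
Write $A(X,Y,Z)=g_1(C(X,Y),Z)$. The identity then reads $(\nabla^{g_2}_Xg_1)(Y,Z)=A(X,Y,Z)+A(X,Z,Y)$, and the symmetry of the first step says $A(X,Y,Z)=A(Y,X,Z)$.

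The core step is the standard Koszul cyclic combination. Form the sum of the derivative terms in the directions $X$ and $Y$, minus the term in the direction $Z$:
\[
[A(X,Y,Z)+A(X,Z,Y)]+[A(Y,X,Z)+A(Y,Z,X)]-[A(Z,X,Y)+A(Z,Y,X)].
\]
By the symmetry in the first two slots, $A(X,Z,Y)=A(Z,X,Y)$ and $A(Y,Z,X)=A(Z,Y,X)$, so these pairs cancel. What remains is $A(X,Y,Z)+A(Y,X,Z)=2A(X,Y,Z)$, which is exactly \eqref{eq:C-intrinsic}. The only place requiring care is keeping the slot ordering consistent, so that the cancellations are genuinely applications of the symmetry of $C$. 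I expect no real obstacle here.

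Finally, for the coordinate form \eqref{eq:C-metric}, I take $X=\partial_i$, $Y=\partial_j$, $Z=\partial_\ell$. Then $g_1(C(\partial_i,\partial_j),\partial_\ell)=C^p{}_{ij}g_{1,p\ell}$, and contracting with $g_1^{k\ell}$ isolates $C^k{}_{ij}$. Here $\nabla^{g_2}_ig_{1,j\ell}$ denotes the components of the tensor $\nabla^{g_2}g_1$. The result is consistent with \eqref{eq:C-components}, and that consistency can serve as a check: in $g_2$-normal coordinates at a point, $\nabla^{g_2}_ig_{1,j\ell}=\partial_ig_{1,j\ell}$, so \eqref{eq:C-metric} reduces to the usual Christoffel formula for $\Gamma(g_1)$, while $\Gamma(g_2)$ vanishes there.
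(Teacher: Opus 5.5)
Your proposal is correct and follows essentially the same route as the paper: symmetry of $C$ from torsion-freeness, the identity $(\nabla^{g_2}_Xg_1)(Y,Z)=g_1(C(X,Y),Z)+g_1(C(X,Z),Y)$ from $\nabla^{g_1}g_1=0$, the Koszul-type combination in which the symmetry of $C$ cancels the mixed terms, and then contraction with $g_1^{k\ell}$ to obtain the coordinate formula. Your final consistency check in $g_2$-normal coordinates is an extra sanity check that the paper does not include, and it is not needed for the proof.
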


\begin{proof}
Since both Levi--Civita connections are torsion free,
\begin{align*}
C(X,Y)-C(Y,X)
&=
\bigl(\nabla_X^{g_1}Y-\nabla_Y^{g_1}X\bigr)
-
\bigl(\nabla_X^{g_2}Y-\nabla_Y^{g_2}X\bigr)
\\
&=
[X,Y]-[X,Y]
=
0.
\end{align*}
Thus $C(X,Y)=C(Y,X)$.

Metric compatibility, $\nabla^{g_1}g_1=0$, gives
\[
X\bigl(g_1(Y,Z)\bigr)
=
g_1(\nabla_X^{g_1}Y,Z)
+
g_1(Y,\nabla_X^{g_1}Z).
\]
Substituting
$\nabla_X^{g_1}Y=\nabla_X^{g_2}Y+C(X,Y)$
and the corresponding identity for $Z$, we obtain
\begin{align*}
X\bigl(g_1(Y,Z)\bigr)
&=
g_1\bigl(\nabla_X^{g_2}Y+C(X,Y),Z\bigr)
+
g_1\bigl(Y,\nabla_X^{g_2}Z+C(X,Z)\bigr)
\\
&=
g_1(\nabla_X^{g_2}Y,Z)
+
g_1(Y,\nabla_X^{g_2}Z)
+
g_1(C(X,Y),Z)
+
g_1(Y,C(X,Z)).
\end{align*}
By the definition of the covariant derivative of a
covariant two-tensor,
\[
(\nabla_X^{g_2}g_1)(Y,Z)
=
X\bigl(g_1(Y,Z)\bigr)
-
g_1(\nabla_X^{g_2}Y,Z)
-
g_1(Y,\nabla_X^{g_2}Z).
\]
Combining these identities and using the symmetry of $g_1$
therefore yields
\[
(\nabla_X^{g_2}g_1)(Y,Z)
=
g_1(C(X,Y),Z)
+
g_1(C(X,Z),Y).
\]
Applying the same identity with differentiating vector fields
$Y$ and $Z$, respectively, gives
\begin{align*}
(\nabla_Y^{g_2}g_1)(X,Z)
&=
g_1(C(Y,X),Z)
+
g_1(C(Y,Z),X),
\\
(\nabla_Z^{g_2}g_1)(X,Y)
&=
g_1(C(Z,X),Y)
+
g_1(C(Z,Y),X).
\end{align*}
Adding the first two identities and subtracting the third,
we find
\begin{align*}
&(\nabla_X^{g_2}g_1)(Y,Z)
+
(\nabla_Y^{g_2}g_1)(X,Z)
-
(\nabla_Z^{g_2}g_1)(X,Y)
\\
=&
g_1\bigl(C(X,Y)+C(Y,X),Z\bigr)
+
g_1\bigl(C(X,Z)-C(Z,X),Y\bigr)
+
g_1\bigl(C(Y,Z)-C(Z,Y),X\bigr)
\\
=&
2g_1(C(X,Y),Z),
\end{align*}
where the last equality follows from the symmetry of $C$.
This proves \eqref{eq:C-intrinsic}.

To obtain the coordinate formula, take
$X=\partial_i$, $Y=\partial_j$, and $Z=\partial_\ell$.
Since
$C(\partial_i,\partial_j)=C^k{}_{ij}\partial_k$,
the resulting identity is
\[
2g_{1,k\ell}C^k{}_{ij}
=
\nabla_i^{g_2}g_{1,j\ell}
+
\nabla_j^{g_2}g_{1,i\ell}
-
\nabla_\ell^{g_2}g_{1,ij}.
\]
Here
$\nabla_i^{g_2}g_{1,j\ell}
=(\nabla_{\partial_i}^{g_2}g_1)(\partial_j,\partial_\ell)$;
explicitly,
\[
\nabla_i^{g_2}g_{1,j\ell}
=
\partial_i g_{1,j\ell}
-
\Gamma(g_2)^p{}_{ij}g_{1,p\ell}
-
\Gamma(g_2)^p{}_{i\ell}g_{1,jp}.
\]
Contracting with $g_1^{r\ell}$ and using
$g_1^{r\ell}g_{1,k\ell}=\delta^r{}_k$, we obtain
\[
2C^r{}_{ij}
=
g_1^{r\ell}
\left(
\nabla_i^{g_2}g_{1,j\ell}
+
\nabla_j^{g_2}g_{1,i\ell}
-
\nabla_\ell^{g_2}g_{1,ij}
\right).
\]
Dividing by two and renaming the free index $r$ as $k$
gives \eqref{eq:C-metric}.
\end{proof}

We next record the contraction of $C$ and the corresponding
identity for the inverse metric.

\begin{lemma}[Elementary identities for the connection difference]
\label{lem:C-identities}
The contracted connection difference and the inverse metric
satisfy
\begin{align}
c_i=C^k{}_{ik}
&=
\frac12g_1^{ab}\nabla_i^{g_2}g_{1,ab}
,
\label{eq:c-density}
\\
\nabla_i^{g_2}g_1^{jk}
&=
-C^j{}_{i\ell}g_1^{\ell k}
-C^k{}_{i\ell}g_1^{j\ell}.
\label{eq:inverse-difference}
\end{align}
\end{lemma}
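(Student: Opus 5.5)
Both identities follow from the relation
\[
(\nabla_X^{g_2}g_1)(Y,Z)=g_1(C(X,Y),Z)+g_1(C(X,Z),Y),
\]
which was derived in the proof of Proposition~\ref{prop:C-metric}. In coordinates it reads
\[
\nabla_i^{g_2}g_{1,ab}=C^p{}_{ia}g_{1,pb}+C^p{}_{ib}g_{1,ap}.
\]
I would derive \eqref{eq:c-density} from this coordinate form and then obtain \eqref{eq:inverse-difference} by differentiating $g_1^{jk}g_{1,k\ell}=\delta^j{}_\ell$.

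\emph{First identity.} Contract the coordinate relation with $g_1^{ab}$. The first term becomes $C^p{}_{ia}\delta^a{}_p=C^a{}_{ia}$. The second term becomes $C^p{}_{ib}\delta^b{}_p$, which is the same quantity. Hence $g_1^{ab}\nabla^{g_2}_ig_{1,ab}=2C^k{}_{ik}=2c_i$, which is \eqref{eq:c-density}. As a cross-check, one can also contract \eqref{eq:C-metric} over $k$ and $j$. The last two terms there cancel by the symmetry of $g_1^{k\ell}$ under the swap $j\leftrightarrow\ell$, leaving the same expression.

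\emph{Second identity.} The covariant derivative $\nabla^{g_2}$ satisfies the Leibniz rule on tensor contractions and annihilates the identity tensor $\delta^j{}_\ell$. Applying it to $g_1^{jk}g_{1,k\ell}=\delta^j{}_\ell$ therefore gives
\[
(\nabla_i^{g_2}g_1^{jk})g_{1,k\ell}=-g_1^{jk}\nabla_i^{g_2}g_{1,k\ell}.
\]
Substitute the coordinate relation into the right-hand side and contract with $g_1^{\ell q}$. The result is
\[
\nabla^{g_2}_ig_1^{jq}
=-g_1^{jk}g_1^{\ell q}\bigl(C^p{}_{ik}g_{1,p\ell}+C^p{}_{i\ell}g_{1,kp}\bigr)
=-g_1^{jk}C^q{}_{ik}-g_1^{\ell q}C^j{}_{i\ell}.
\]
After renaming indices, this is exactly \eqref{eq:inverse-difference}.

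There is no real obstacle. The only point needing care is index bookkeeping: the placement of contracted indices on $C$, and the symmetry of $g_1$ used when matching the two terms. I would verify the signs by checking the case $g_1=g_2$, where $C=0$ and both sides of each identity vanish.
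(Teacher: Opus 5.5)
Your argument is correct, but it reaches the second identity by a different route from the paper.

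\textbf{Your route.} You derive both identities from the single covariant relation $\nabla^{g_2}_ig_{1,ab}=C^p{}_{ia}g_{1,pb}+C^p{}_{ib}g_{1,ap}$, which is established inside the proof of Proposition~\ref{prop:C-metric}. Contracting it with $g_1^{ab}$ gives \eqref{eq:c-density}. Applying $\nabla^{g_2}$ to $g_1^{jk}g_{1,k\ell}=\delta^j{}_\ell$ and substituting gives \eqref{eq:inverse-difference}.

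\textbf{The paper's route.} It obtains \eqref{eq:c-density} by contracting the Koszul-type formula \eqref{eq:C-metric} with $j=k$, which is your cross-check. It obtains \eqref{eq:inverse-difference} in one line from $0=\nabla^{g_1}_ig_1^{jk}$. This uses that on each contravariant index $\nabla^{g_1}$ differs from $\nabla^{g_2}$ by $+C$, which is just the definition $C(X,Y)=\nabla^{g_1}_XY-\nabla^{g_2}_XY$ extended by the Leibniz rule.

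\textbf{Comparison.} The paper's route is shorter. Yours avoids invoking the action of $\nabla^{g_1}$ on $(2,0)$-tensors, and it makes explicit that both identities follow from one relation encoding $\nabla^{g_1}g_1=0$. The cost is the inverse-matrix manipulation.

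\textbf{A small remark on your sign check.} The check you propose at $g_1=g_2$ is vacuous, since both sides of each identity vanish identically there. A nontrivial test is a conformal pair $g_1=e^{2f}g_2$. For this pair both sides of \eqref{eq:c-density} equal $n\,\partial_if$, and both sides of \eqref{eq:inverse-difference} equal $-2e^{-2f}\,\partial_if\,g_2^{jk}$. Your index computations are correct as written in any case, so nothing further is needed.
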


\begin{proof}
Contracting the upper index with the second lower index
in \eqref{eq:C-metric} gives
\begin{align*}
c_i
&=
\frac12g_1^{k\ell}
\left(
\nabla_i^{g_2}g_{1,k\ell}
+
\nabla_k^{g_2}g_{1,i\ell}
-
\nabla_\ell^{g_2}g_{1,ik}
\right)
\\
&=
\frac12g_1^{k\ell}\nabla_i^{g_2}g_{1,k\ell}.
\end{align*}
The last two terms in the first line cancel after
interchanging the dummy indices $k$ and $\ell$.

The covariant derivative of this scalar function is its
ordinary derivative, which proves \eqref{eq:c-density}.

For the inverse metric, the induced connection acts on
each contravariant index. Hence
\[
0
=
\nabla_i^{g_1}g_1^{jk}
=
\nabla_i^{g_2}g_1^{jk}
+
C^j{}_{i\ell}g_1^{\ell k}
+
C^k{}_{i\ell}g_1^{j\ell},
\]
where the first equality follows from
$\nabla^{g_1}g_1^{-1}=0$.
Rearranging proves \eqref{eq:inverse-difference}.
\end{proof}

We now describe the induced connection on the exterior
bundle in the coordinate frame.

\begin{lemma}[The induced exterior connection in normal coordinates]
\label{lem:exterior-connection}
Choose $g_2$-normal coordinates centered at $x_0$.
Let $\nabla^{\Lambda,g_1}$ be the connection on $E$ induced
by $\nabla^{g_1}$, and write
$\nabla_i^{\Lambda,g_1}
=\nabla_{\partial_i}^{\Lambda,g_1}$.
Define its connection matrix in the coordinate exterior
frame by
\[
\nabla_i^{\Lambda,g_1}
=
\partial_i+\omega_i^{(1)},
\]
where $\partial_i$ differentiates only the coefficient
functions in this frame.
Throughout the coordinate neighborhood,
\begin{equation}
\label{eq:omega}
\omega_i^{(1)}
=
-\Gamma(g_1)^a{}_{ib}
\eps(dx^b)\iotaop(\partial_a).
\end{equation}
At the center $x_0$, one has
$\Gamma(g_2)^a{}_{ib}(x_0)=0$,
$\Gamma(g_1)^a{}_{ib}(x_0)=C^a{}_{ib}(x_0)$, and therefore
\[
\omega_i^{(1)}(x_0)
=
-C^a{}_{ib}(x_0)
\bigl[\eps(dx^b)\iotaop(\partial_a)\bigr]_{x_0}.
\]
\end{lemma}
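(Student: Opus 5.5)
Since $\nabla^{\Lambda,g_1}$ is defined by extending $\nabla^{g_1}$ as a derivation, the plan is to compute its action on the coordinate exterior frame $dx^{i_1}\wedge\cdots\wedge dx^{i_p}$ directly, and then verify that the proposed endomorphism $-\Gamma(g_1)^a{}_{ib}\eps(dx^b)\iotaop(\partial_a)$ reproduces that action. We begin with one-forms: from $\nabla^{g_1}_{\partial_i}\partial_a=\Gamma(g_1)^b{}_{ia}\partial_b$ and the duality $dx^b(\partial_a)=\delta^b{}_a$ we get
\[
\nabla^{g_1}_{\partial_i}dx^a=-\Gamma(g_1)^a{}_{ib}\,dx^b .
\]
The connection on $E$ satisfies the Leibniz rule with respect to the wedge product, and it annihilates the constant function $1$ (the frame element in degree zero). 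Hence
\[
\nabla_i^{\Lambda,g_1}\bigl(dx^{i_1}\wedge\cdots\wedge dx^{i_p}\bigr)
=\sum_{s=1}^p dx^{i_1}\wedge\cdots\wedge\bigl(-\Gamma(g_1)^{i_s}{}_{ib}\,dx^b\bigr)\wedge\cdots\wedge dx^{i_p}.
\]
This shows that $\omega^{(1)}_i$ is the unique derivation of the exterior algebra, of degree zero, that agrees on one-forms with the endomorphism $A_i\colon dx^a\mapsto -\Gamma(g_1)^a{}_{ib}dx^b$.

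The key step is to check that $\eps(dx^b)\iotaop(\partial_a)$ is itself a degree-zero derivation. We would derive this from the canonical anticommutation relations \eqref{eq:CAR}: $\iotaop(\partial_a)$ is an odd derivation and $\eps(dx^b)$ is left multiplication by an odd element, so their product commutes past even factors and satisfies the ordinary Leibniz rule. A cleaner route is to apply $\eps(dx^b)\iotaop(\partial_a)$ to $dx^{i_1}\wedge\cdots\wedge dx^{i_p}$ directly. Contraction produces the terms $(-1)^{s-1}\delta^{i_s}{}_a$ with the $s$-th factor removed. Wedging $dx^b$ in front and moving it back to slot $s$ costs a further $(-1)^{s-1}$, so the two signs cancel. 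The result is $\sum_s\delta^{i_s}{}_a(\text{$s$-th factor replaced by }dx^b)$. On one-forms this map is $dx^c\mapsto\delta^c{}_a dx^b$. Therefore $-\Gamma(g_1)^a{}_{ib}\eps(dx^b)\iotaop(\partial_a)$ sends $dx^c\mapsto-\Gamma(g_1)^c{}_{ib}dx^b$, which is exactly $A_i$. Since two derivations that agree on generators coincide, \eqref{eq:omega} holds throughout the chart. The formula uses nothing about normal coordinates, so it is valid on the whole neighborhood.

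Only the sign bookkeeping in this step is delicate. If we use the \eqref{eq:CAR} route, we would check the Leibniz rule via the identity $\eps(dx^b)\iotaop(\partial_a)(\alpha\wedge\omega)=(\eps(dx^b)\iotaop(\partial_a)\alpha)\wedge\omega+\alpha\wedge\eps(dx^b)\iotaop(\partial_a)\omega$ for a one-form $\alpha$, which follows from $\iotaop(\partial_a)(\alpha\wedge\omega)=\alpha(\partial_a)\omega-\alpha\wedge\iotaop(\partial_a)\omega$.

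For the statement at $x_0$, note that $g_2$-normal coordinates give $\Gamma(g_2)^a{}_{ib}(x_0)=0$, as recalled in Section~\ref{sec:conventions}. Then \eqref{eq:C-components} yields $\Gamma(g_1)^a{}_{ib}(x_0)=C^a{}_{ib}(x_0)$, and substituting this into \eqref{eq:omega} evaluated at $x_0$ gives the last displayed identity.
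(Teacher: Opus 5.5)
Your proposal is correct and follows essentially the same route as the paper. The paper also obtains $\nabla^{g_1}_{\partial_i}dx^c=-\Gamma(g_1)^c{}_{ib}dx^b$ by duality, applies the Leibniz rule to $dx^I$, and checks the formula by computing $\eps(dx^b)\iotaop(\partial_a)dx^I$ directly, with the same cancellation of the two signs $(-1)^{s-1}$; it then reads off the value at $x_0$ from $\Gamma(g_2)^a{}_{ib}(x_0)=0$ and \eqref{eq:C-components}. Your ``derivations agreeing on generators'' framing only repackages that computation, and you use implicitly the extension to general sections via the Leibniz rule in the coefficient functions, which the paper writes out.
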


\begin{proof}
The connection on one-forms is the dual of the connection
on vector fields. Since $dx^c(\partial_a)=\delta^c{}_a$
is constant,
\begin{align*}
0
&=
\partial_i\bigl(dx^c(\partial_a)\bigr)
\\
&=
(\nabla_{\partial_i}^{g_1}dx^c)(\partial_a)
+
dx^c(\nabla_{\partial_i}^{g_1}\partial_a)
\\
&=
(\nabla_{\partial_i}^{g_1}dx^c)(\partial_a)
+
\Gamma(g_1)^c{}_{ia}.
\end{align*}
Thus
$\nabla_{\partial_i}^{g_1}dx^c
=-\Gamma(g_1)^c{}_{ib}dx^b$.
The endomorphism in \eqref{eq:omega} has precisely this
action on a coordinate one-form:
\begin{align*}
-\Gamma(g_1)^a{}_{ib}
\eps(dx^b)\iotaop(\partial_a)dx^c
&=
-\Gamma(g_1)^a{}_{ib}\delta^c{}_a\,dx^b
\\
&=
-\Gamma(g_1)^c{}_{ib}dx^b.
\end{align*}

To verify its action in every degree, let
$I=(j_1,\ldots,j_p)$ with
$1\leq j_1<\cdots<j_p\leq n$, and put
$dx^I=dx^{j_1}\wedge\cdots\wedge dx^{j_p}$.
The Leibniz rule for the induced connection gives
\begin{align*}
\nabla_i^{\Lambda,g_1}dx^I
&=
\sum_{s=1}^p
dx^{j_1}\wedge\cdots\wedge
\bigl(\nabla_{\partial_i}^{g_1}dx^{j_s}\bigr)
\wedge\cdots\wedge dx^{j_p}
\\
&=
-\sum_{s=1}^p\sum_{b=1}^n
\Gamma(g_1)^{j_s}{}_{ib}\,
dx^{j_1}\wedge\cdots\wedge dx^b
\wedge\cdots\wedge dx^{j_p},
\end{align*}
where $dx^b$ occupies the $s$th position.

On the other hand, contraction satisfies
\[
\iotaop(\partial_a)dx^I
=
\sum_{s=1}^p
(-1)^{s-1}\delta^{j_s}{}_a\,
dx^{j_1}\wedge\cdots\wedge
\widehat{dx^{j_s}}\wedge\cdots\wedge dx^{j_p}.
\]
The hat denotes omission of that factor.
Exterior multiplication by $dx^b$ places it first:
\[
\eps(dx^b)\iotaop(\partial_a)dx^I
=
\sum_{s=1}^p
(-1)^{s-1}\delta^{j_s}{}_a\,
dx^b\wedge dx^{j_1}\wedge\cdots\wedge
\widehat{dx^{j_s}}\wedge\cdots\wedge dx^{j_p}.
\]
Moving $dx^b$ to the $s$th position requires $s-1$
interchanges and introduces another factor $(-1)^{s-1}$.
Since $(-1)^{s-1}(-1)^{s-1}=1$, this becomes
\[
\eps(dx^b)\iotaop(\partial_a)dx^I
=
\sum_{s=1}^p
\delta^{j_s}{}_a\,
dx^{j_1}\wedge\cdots\wedge dx^b
\wedge\cdots\wedge dx^{j_p}.
\]
Consequently,
\begin{align*}
-\Gamma(g_1)^a{}_{ib}
\eps(dx^b)\iotaop(\partial_a)dx^I
&
=
-\sum_{s=1}^p\sum_{b=1}^n
\Gamma(g_1)^{j_s}{}_{ib}\,
dx^{j_1}\wedge\cdots\wedge dx^b
\wedge\cdots\wedge dx^{j_p}
\\
&
=
\nabla_i^{\Lambda,g_1}dx^I.
\end{align*}
In degree zero, we set $dx^\varnothing=1$.
Both $\nabla_i^{\Lambda,g_1}1$ and
$\iotaop(\partial_a)1$ vanish.

For a general section $\alpha=\sum_I\alpha_I dx^I$,
where the sum runs over all exterior degrees, the Leibniz
rule therefore gives
\begin{align*}
\nabla_i^{\Lambda,g_1}\alpha
&=
\sum_I(\partial_i\alpha_I)dx^I
+
\sum_I\alpha_I\nabla_i^{\Lambda,g_1}dx^I
\\
&=
\partial_i\alpha
-
\Gamma(g_1)^a{}_{ib}
\eps(dx^b)\iotaop(\partial_a)\alpha.
\end{align*}
Here $\partial_i\alpha=\sum_I(\partial_i\alpha_I)dx^I$.
Comparison with
$\nabla_i^{\Lambda,g_1}=\partial_i+\omega_i^{(1)}$
proves \eqref{eq:omega}.

Finally, $g_2$-normal coordinates give
$\Gamma(g_2)^a{}_{ib}(x_0)=0$.
Equation~\eqref{eq:C-components} then yields
$\Gamma(g_1)^a{}_{ib}(x_0)=C^a{}_{ib}(x_0)$,
which proves the asserted formula at $x_0$.
\end{proof}

At the center of these normal coordinates,
Lemma~\ref{lem:exterior-connection} and the trace identities
\eqref{eq:lambda2} and \eqref{eq:lambda4} give
\begin{align*}
\trE\omega_i^{(1)}
&=
-2^{n-1}C^a{}_{ib}\delta^b{}_a
=
-2^{n-1}c_i,
\\
\trE\bigl(\omega_i^{(1)}\omega_j^{(1)}\bigr)
&=
2^{n-2}C^a{}_{ib}C^c{}_{jd}
\bigl(
\delta^b{}_a\delta^d{}_c
+
\delta^b{}_c\delta^d{}_a
\bigr)
\\
&=
2^{n-2}
\bigl(
c_ic_j+C^k{}_{i\ell}C^\ell{}_{jk}
\bigr).
\end{align*}

Let $\mathcal R_{g_1}$ denote the curvature endomorphism
in the Weitzenb\"ock formula for the Hodge Laplacian.
The standard decomposition \cite{Gilkey1995} reads
\begin{equation}
\label{eq:Weitzenbock}
D_{g_1}^2
=
-g_1^{ij}
\left(
\nabla_i^{\Lambda,g_1}\nabla_j^{\Lambda,g_1}
-
\Gamma(g_1)^k{}_{ij}\nabla_k^{\Lambda,g_1}
\right)
+
\mathcal R_{g_1}.
\end{equation}

We compute the fiber trace of $\mathcal R_{g_1}$ from
its action on forms. At a point, choose a $g_1$-orthonormal
basis $e_1,\ldots,e_n$ with dual basis $e^1,\ldots,e^n$.
For an increasing $p$-tuple $I$, write
$e^I=e^{i_1}\wedge\cdots\wedge e^{i_p}$.
With respect to the induced inner product on forms, the
diagonal entry is
\[
\bigl\langle\mathcal R_{g_1}e^I,e^I\bigr\rangle_{g_1}
=
\sum_{\substack{i\in I\\j\notin I}}
g_1\bigl(R^{g_1}(e_i,e_j)e_j,e_i\bigr).
\]
For a fixed unordered pair $\{i,j\}$, there are
$\binom{n-2}{p-1}$ subsets $I$ containing $i$ but not $j$,
and the same number containing $j$ but not $i$.
Since
$R_{g_1}
=2\sum_{i<j}g_1(R^{g_1}(e_i,e_j)e_j,e_i)$,
it follows that, for $1\leq p\leq n-1$,
\begin{align*}
\operatorname{tr}_{\Lambda^pT^*M\otimes\mathbb C}
\mathcal R_{g_1}
&=
2\binom{n-2}{p-1}
\sum_{i<j}
g_1\bigl(R^{g_1}(e_i,e_j)e_j,e_i\bigr)
\\
&=
\binom{n-2}{p-1}R_{g_1}.
\end{align*}
The curvature action is zero in degrees $0$ and $n$.
Summing over all degrees therefore gives
\begin{equation}
\label{eq:trace-curvature}
\trE\mathcal R_{g_1}
=
\left(
\sum_{p=1}^{n-1}\binom{n-2}{p-1}
\right)R_{g_1}
=
2^{n-2}R_{g_1}.
\end{equation}

We can now determine the numerator symbol.
For a section $\alpha$ expressed in the coordinate exterior
frame, the product rule gives
\begin{align*}
\nabla_i^{\Lambda,g_1}\nabla_j^{\Lambda,g_1}\alpha
&=
(\partial_i+\omega_i^{(1)})
(\partial_j\alpha+\omega_j^{(1)}\alpha)
\\
&=
\partial_i\partial_j\alpha
+
\omega_j^{(1)}\partial_i\alpha
+
\omega_i^{(1)}\partial_j\alpha
+
\bigl(
\partial_i\omega_j^{(1)}
+
\omega_i^{(1)}\omega_j^{(1)}
\bigr)\alpha.
\end{align*}
Because $g_1^{ij}$ is symmetric, contraction of the two
first-order terms gives
$2g_1^{ij}\omega_i^{(1)}\partial_j\alpha$.
Substitution into \eqref{eq:Weitzenbock} yields
\begin{align*}
D_{g_1}^2\alpha
={}&
-g_1^{ij}\partial_i\partial_j\alpha
\\
&+
\left(
-2g_1^{ik}\omega_i^{(1)}
+
g_1^{ij}\Gamma(g_1)^k{}_{ij}\Id_E
\right)\partial_k\alpha
\\
&+
\left[
-g_1^{ij}
\left(
\partial_i\omega_j^{(1)}
+
\omega_i^{(1)}\omega_j^{(1)}
-
\Gamma(g_1)^k{}_{ij}\omega_k^{(1)}
\right)
+
\mathcal R_{g_1}
\right]\alpha.
\end{align*}
Using $\sigma(\partial_k)=\ii\xi_k$, we therefore obtain
$\sigma(D_{g_1}^2)
=F^{ab}\xi_a\xi_b+\ii G^a\xi_a+H$, where
\begin{align*}
F^{ab}
&=
g_1^{ab}\Id_E,
\\
G^k
&=
-2g_1^{ik}\omega_i^{(1)}
+
g_1^{ij}\Gamma(g_1)^k{}_{ij}\Id_E,
\\
H
&=
-g_1^{ij}
\left(
\partial_i\omega_j^{(1)}
+
\omega_i^{(1)}\omega_j^{(1)}
-
\Gamma(g_1)^k{}_{ij}\omega_k^{(1)}
\right)
+
\mathcal R_{g_1}.
\end{align*}
In particular, the principal symbol is scalar, so
Proposition~\ref{prop:compression} applies with
$f^{ab}=g_1^{ab}$. It remains to evaluate $\trE H$.

\begin{proposition}[Trace of the zero-order numerator symbol]
\label{prop:trH}
Let $H=\sigma_0(D_{g_1}^2)$ be computed in the coordinate
exterior frame used above. At the center $x_0$ of the
$g_2$-normal coordinates,
\begin{equation}
\label{eq:trH}
\begin{aligned}
\trE H
={}&
2^{n-2}R_{g_1}
+
2^{n-1}g_1^{ij}\nabla_i^{g_2}c_j
-
2^{n-2}g_1^{ij}
\bigl(
c_ic_j+C^k{}_{i\ell}C^\ell{}_{jk}
\bigr)
\\
&-
2^{n-1}g_1^{ij}C^r{}_{ij}c_r
-
\frac{2^{n-1}}3g_1^{ij}\Ric(g_2)_{ij}.
\end{aligned}
\end{equation}
\end{proposition}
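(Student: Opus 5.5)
The plan is to take the fiber trace of each of the four pieces of
\[
H=-g_1^{ij}\bigl(\partial_i\omega_j^{(1)}+\omega_i^{(1)}\omega_j^{(1)}-\Gamma(g_1)^k{}_{ij}\omega_k^{(1)}\bigr)+\mathcal R_{g_1}
\]
separately at $x_0$. Since $g_1^{ij}$ and $\Gamma(g_1)^k{}_{ij}$ are scalars, they come outside $\trE$. The curvature term gives $2^{n-2}R_{g_1}$ by \eqref{eq:trace-curvature}. The quadratic term is handled by the trace of $\omega_i^{(1)}\omega_j^{(1)}$ recorded after Lemma~\ref{lem:exterior-connection}, and contributes $-2^{n-2}g_1^{ij}(c_ic_j+C^k{}_{i\ell}C^\ell{}_{jk})$.

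For the connection term, I will use $\Gamma(g_1)^k{}_{ij}(x_0)=C^k{}_{ij}(x_0)$ and $\trE\omega_k^{(1)}=-2^{n-1}c_k$. This turns $+g_1^{ij}\Gamma(g_1)^k{}_{ij}\trE\omega_k^{(1)}$ into $-2^{n-1}g_1^{ij}C^r{}_{ij}c_r$.

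The only delicate term is $-g_1^{ij}\trE(\partial_i\omega_j^{(1)})$, because it involves a derivative. I cannot substitute the value of $\omega_j^{(1)}$ at $x_0$ before differentiating. Instead I use \eqref{eq:omega}, which is valid throughout the coordinate neighbourhood. In the coordinate exterior frame the matrices of $\eps(dx^b)\iotaop(\partial_a)$ are constant, so $\partial_i$ commutes with both the endomorphism and the trace. Then \eqref{eq:lambda2} gives
\[
\trE\bigl(\partial_i\omega_j^{(1)}\bigr)=\partial_i\trE\omega_j^{(1)}=-2^{n-1}\partial_i\Gamma(g_1)^a{}_{ja}
\]
as an identity of functions near $x_0$.

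Next I split $\Gamma(g_1)^a{}_{ja}=c_j+\Gamma(g_2)^a{}_{ja}$ by \eqref{eq:C-components}, which holds in a whole neighbourhood. At $x_0$ the Christoffel symbols of $g_2$ vanish, so $\partial_ic_j(x_0)=\nabla_i^{g_2}c_j(x_0)$. By \eqref{eq:contracted-Gamma} applied to $g=g_2$, $\partial_i\Gamma(g_2)^a{}_{ja}(x_0)=-\tfrac13\Ric(g_2)_{ij}$. Multiplying by $-g_1^{ij}$, this term yields
\[
2^{n-1}g_1^{ij}\nabla_i^{g_2}c_j-\frac{2^{n-1}}{3}g_1^{ij}\Ric(g_2)_{ij}.
\]

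Adding the four contributions reproduces \eqref{eq:trH}. I expect the derivative term to be the main obstacle. The points to get right are that the frame endomorphisms are genuinely constant in coordinates, so the trace commutes with $\partial_i$, and that the normal-coordinate Christoffel symbols of $g_2$ vanish only at $x_0$, not to first order. Keeping their first derivatives is exactly what produces the $\Ric(g_2)$ term. The other three contributions are direct substitutions of results already established.
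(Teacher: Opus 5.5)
Your proposal is correct and follows essentially the same route as the paper: you split $\trE H$ into the same four traced pieces (curvature, quadratic, Christoffel correction, derivative), and handle the derivative term the same way, by differentiating $\trE\omega_j^{(1)}=-2^{n-1}\Gamma(g_1)^a{}_{ja}$ before evaluating and then splitting $\Gamma(g_1)^a{}_{ja}=\Gamma(g_2)^a{}_{ja}+c_j$ with \eqref{eq:contracted-Gamma}. Your explicit remark that the coordinate frame endomorphisms are constant, so that $\partial_i$ commutes with the trace, makes precise a step the paper uses only implicitly.
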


\begin{proof}
We first compute the derivative term in $H$.
Equation~\eqref{eq:omega} holds throughout the coordinate
neighborhood, and \eqref{eq:lambda2} gives
$\trE\omega_j^{(1)}
=-2^{n-1}\Gamma(g_1)^a{}_{ja}$.
Differentiating this identity before evaluating at $x_0$,
we obtain
\begin{align*}
\trE\bigl(\partial_i\omega_j^{(1)}\bigr)
&=
-2^{n-1}\partial_i\Gamma(g_1)^a{}_{ja}
\\
&=
-2^{n-1}
\left(
\partial_i\Gamma(g_2)^a{}_{ja}
+
\partial_i c_j
\right),
\end{align*}
where the second equality follows from
\eqref{eq:C-components}.

At the normal-coordinate center,
$\partial_i c_j=\nabla_i^{g_2}c_j$.
Moreover, \eqref{eq:contracted-Gamma} gives
$\partial_i\Gamma(g_2)^a{}_{ja}
=-\Ric(g_2)_{ij}/3$.
Consequently,
\[
\trE\bigl(\partial_i\omega_j^{(1)}\bigr)
=
-2^{n-1}\nabla_i^{g_2}c_j
+
\frac{2^{n-1}}3\Ric(g_2)_{ij}.
\]

The derivative term in the trace of $H$ is thus
\[
-g_1^{ij}\trE\bigl(\partial_i\omega_j^{(1)}\bigr)
=
2^{n-1}g_1^{ij}\nabla_i^{g_2}c_j
-
\frac{2^{n-1}}3g_1^{ij}\Ric(g_2)_{ij}.
\]
The quadratic connection term, computed above from
\eqref{eq:lambda4}, is
\[
-g_1^{ij}
\trE\bigl(\omega_i^{(1)}\omega_j^{(1)}\bigr)
=
-2^{n-2}g_1^{ij}
\bigl(
c_ic_j+C^k{}_{i\ell}C^\ell{}_{jk}
\bigr).
\]
For the Christoffel correction, use
$\Gamma(g_1)^r{}_{ij}=C^r{}_{ij}$ at $x_0$ and
$\trE\omega_r^{(1)}=-2^{n-1}c_r$ to obtain
\[
g_1^{ij}\Gamma(g_1)^r{}_{ij}\trE\omega_r^{(1)}
=
-2^{n-1}g_1^{ij}C^r{}_{ij}c_r.
\]
Finally,
$\trE\mathcal R_{g_1}=2^{n-2}R_{g_1}$
by \eqref{eq:trace-curvature}.
Substituting these contributions into
\begin{align*}
\trE H
={}&
-g_1^{ij}\trE\bigl(\partial_i\omega_j^{(1)}\bigr)
-
g_1^{ij}\trE\bigl(\omega_i^{(1)}\omega_j^{(1)}\bigr)
\\
&+
g_1^{ij}\Gamma(g_1)^r{}_{ij}\trE\omega_r^{(1)}
+
\trE\mathcal R_{g_1}
\end{align*}
proves \eqref{eq:trH}.
\end{proof}

\subsection{The closed bimetric KKW theorem}
\label{sec:closed}

Define the local bimetric scalar by
\begin{align}
\mathscr L_n(g_1\mid g_2)
:={}&
R_{g_1}
+
2g_1^{ij}\nabla_i^{g_2}c_j
-
g_1^{ij}
\bigl(
c_ic_j+C^k{}_{i\ell}C^\ell{}_{jk}
\bigr)
\notag\\
&-
2g_1^{ij}C^r{}_{ij}c_r
-
\frac23g_1^{ij}\Ric(g_2)_{ij}
-
\frac16R_{g_2}\,g_{2,ij}g_1^{ij}.
\label{eq:local-density}
\end{align}
Here $C$ is the connection difference defined in
\eqref{eq:C-components}, and $c_i=C^k{}_{ik}$ is equivalently
expressed by \eqref{eq:c-density}. Every term in
\eqref{eq:local-density} is a complete tensor contraction,
so $\mathscr L_n(g_1\mid g_2)$ is a globally defined smooth
function.

\begin{theorem}[Local-density form of the closed bimetric KKW formula]
\label{thm:local-closed}
Let $M^n$ be a closed oriented manifold of even dimension
$n=2m\geq2$, and let $g_1,g_2$ be Riemannian metrics on $M$.
Then
\[
\Wres(D_{g_1}^2D_{g_2}^{-n})
=
2^{n-2}\nu_{n-1}
\int_M
\mathscr L_n(g_1\mid g_2)\dd\vol_{g_2},
\]
where $\mathscr L_n(g_1\mid g_2)$ is defined by
\eqref{eq:local-density}.
\end{theorem}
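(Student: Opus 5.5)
The proof will be a direct assembly of the results already established. At an arbitrary point $x_0\in M$ we choose $g_2$-normal coordinates centered at $x_0$ and work in the coordinate exterior frame. We then apply Proposition~\ref{prop:compression} with $O=D_{g_1}^2$. This is legitimate because the symbol computation preceding Proposition~\ref{prop:trH} shows that the principal symbol of $D_{g_1}^2$ is scalar, namely $F^{ab}=g_1^{ab}\Id_E$, so $f^{ab}=g_1^{ab}$. The local identity \eqref{eq:Hodge-denominator-reduction} then gives the traced sphere integral of $\sigma_{-n}(D_{g_1}^2D_{g_2}^{-n})$ at $x_0$ as
\[
\nu_{n-1}\Bigl[\trE H-\tfrac{2^n}{24}R_{g_2}\,g_{2,ab}g_1^{ab}\Bigr].
\]
Here $D_{g_2}^{-n}=(D_{g_2}^2)^{-m}$ modulo smoothing operators, which does not affect the residue.

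Next we substitute $\trE H$ from \eqref{eq:trH} and factor out $2^{n-2}$. The terms $R_{g_1}$, $2g_1^{ij}\nabla^{g_2}_ic_j$, $-g_1^{ij}(c_ic_j+C^k{}_{i\ell}C^\ell{}_{jk})$, $-2g_1^{ij}C^r{}_{ij}c_r$ and $-\tfrac23 g_1^{ij}\Ric(g_2)_{ij}$ carry over directly. The denominator term contributes $-\tfrac{2^n}{24}=-2^{n-2}\cdot\tfrac16$, which produces $-\tfrac16R_{g_2}g_{2,ij}g_1^{ij}$. The bracket is therefore exactly $2^{n-2}\mathscr L_n(g_1\mid g_2)(x_0)$.

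The one point that needs care is that the pointwise identity is computed in a coordinate-dependent frame, so we must check that the result is independent of these choices. Two observations handle this. First, the left-hand side (the traced, sphere-integrated $\sigma_{-n}$) does not depend on the frame: the density $\int\trE\sigma_{-n}\,dS$ is invariant under changes of coordinates and under conjugation by bundle automorphisms. Second, each term of $\mathscr L_n$ is a full tensor contraction and hence globally defined, as remarked after \eqref{eq:local-density}. Both sides are thus globally defined smooth functions that agree at every $x_0$.

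The final step is to integrate against $\dd\vol_{g_2}$ using the residue convention of Section~\ref{sec:conventions}, which gives the stated formula. The main obstacle is bookkeeping rather than a conceptual difficulty: one must verify that evaluating $\nabla^{g_2}_ic_j$ as $\partial_ic_j$ at the normal-coordinate center, and identifying $\Gamma(g_1)$ with $C$ there, is consistent with the tensorial expression. Proposition~\ref{prop:trH} already handles this.
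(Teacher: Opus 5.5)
Your proposal is correct and follows the paper's proof essentially verbatim. You apply Proposition~\ref{prop:compression} with $f^{ab}=g_1^{ab}$ at the center of $g_2$-normal coordinates, substitute \eqref{eq:trH}, use $2^n/24=2^{n-2}/6$ to recognize $2^{n-2}\mathscr L_n(g_1\mid g_2)$, and integrate against $\dd\vol_{g_2}$. Your added remark on coordinate and frame independence of the residue density only makes explicit what the paper leaves implicit in its localization argument after Proposition~\ref{prop:universal}.
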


\begin{proof}
At an arbitrary point $x_0$, use $g_2$-normal coordinates
and their coordinate exterior frame.
The scalar principal coefficient of $D_{g_1}^2$ is
$f^{ab}=g_1^{ab}$.
By Proposition~\ref{prop:trH} and the identity
$2^n/24=2^{n-2}/6$,
\[
\trE H
-
\frac{2^n}{24}R_{g_2}\,g_{2,ab}g_1^{ab}
=
2^{n-2}\mathscr L_n(g_1\mid g_2).
\]
Substituting into
\eqref{eq:Hodge-denominator-reduction} gives the pointwise
identity
\[
\int_{|\xi|_{g_2}=1}
\trE\bigl(
\sigma_{-n}(D_{g_1}^2D_{g_2}^{-n})(x_0,\xi)
\bigr)\dd S_{g_2}(\xi)
=
2^{n-2}\nu_{n-1}\mathscr L_n(g_1\mid g_2)(x_0).
\]
Integration against $\dd\vol_{g_2}$ proves the theorem.
\end{proof}

The derivative term in \eqref{eq:local-density} can be
written as a divergence and algebraic contractions.
On a closed manifold, this gives the following equivalent
integral formula.

\begin{theorem}[Global closed formula]
\label{thm:global-closed}
Under the hypotheses of Theorem~\ref{thm:local-closed},
\begin{equation}
\label{eq:global-main}
\begin{aligned}
\Wres(D_{g_1}^2D_{g_2}^{-n})
={}&
2^{n-2}\nu_{n-1}
\int_M
\Bigl[
R_{g_1}
+
g_1^{ij}
\bigl(
c_ic_j-C^k{}_{i\ell}C^\ell{}_{jk}
\bigr)
\\
&\qquad
-
\frac23g_1^{ij}\Ric(g_2)_{ij}
-
\frac16R_{g_2}\,g_{2,ij}g_1^{ij}
\Bigr]\dd\vol_{g_2}.
\end{aligned}
\end{equation}
\end{theorem}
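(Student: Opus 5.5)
The plan is to start from Theorem~\ref{thm:local-closed} and rewrite only the part of $\mathscr L_n(g_1\mid g_2)$ that involves the connection difference. The curvature terms $R_{g_1}$, $-\frac23g_1^{ij}\Ric(g_2)_{ij}$ and $-\frac16R_{g_2}g_{2,ij}g_1^{ij}$ already appear unchanged in \eqref{eq:global-main}. It therefore suffices to prove
\[
\int_M\Bigl[2g_1^{ij}\nabla_i^{g_2}c_j-g_1^{ij}\bigl(c_ic_j+C^k{}_{i\ell}C^\ell{}_{jk}\bigr)-2g_1^{ij}C^r{}_{ij}c_r\Bigr]\dd\vol_{g_2}
=\int_M g_1^{ij}\bigl(c_ic_j-C^k{}_{i\ell}C^\ell{}_{jk}\bigr)\dd\vol_{g_2}.
\]

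The key step is to write the derivative term as a $g_2$-divergence plus algebraic terms. Consider the vector field $V^i=g_1^{ij}c_j$. By the Leibniz rule for $\nabla^{g_2}$,
\[
\nabla_i^{g_2}V^i=g_1^{ij}\nabla_i^{g_2}c_j+(\nabla_i^{g_2}g_1^{ij})c_j .
\]
Since $M$ is closed, the divergence theorem for $\dd\vol_{g_2}$ gives $\int_M\nabla_i^{g_2}V^i\dd\vol_{g_2}=0$. Hence
\[
\int_M g_1^{ij}\nabla_i^{g_2}c_j\dd\vol_{g_2}=-\int_M(\nabla_i^{g_2}g_1^{ij})c_j\dd\vol_{g_2}.
\]

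Next I evaluate $\nabla_i^{g_2}g_1^{ij}$ using \eqref{eq:inverse-difference}, contracted in $i$ and the first upper index:
\[
\nabla_i^{g_2}g_1^{ij}=-C^i{}_{i\ell}g_1^{\ell j}-C^j{}_{i\ell}g_1^{i\ell}.
\]
The symmetry $C^k{}_{ij}=C^k{}_{ji}$, established in the proof of Proposition~\ref{prop:C-metric}, gives $C^i{}_{i\ell}=C^i{}_{\ell i}=c_\ell$. Therefore
\[
-(\nabla_i^{g_2}g_1^{ij})c_j=g_1^{\ell j}c_\ell c_j+g_1^{i\ell}C^j{}_{i\ell}c_j ,
\]
and twice the derivative term integrates to
\[
\int_M\bigl(2g_1^{ij}c_ic_j+2g_1^{ij}C^r{}_{ij}c_r\bigr)\dd\vol_{g_2}.
\]
Adding the remaining algebraic terms of $\mathscr L_n$ cancels the $\pm2g_1^{ij}C^r{}_{ij}c_r$ contributions. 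The $c_ic_j$ coefficient becomes $2-1=1$, and $-g_1^{ij}C^k{}_{i\ell}C^\ell{}_{jk}$ is unchanged. This is exactly the displayed identity, and \eqref{eq:global-main} follows.

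The only delicate point is index bookkeeping. One must contract \eqref{eq:inverse-difference} in the correct slot and use the symmetry of $C$ to identify $C^i{}_{i\ell}$ with $c_\ell=C^k{}_{\ell k}$. One must also use the $g_2$-divergence rather than the $g_1$-divergence, because the volume form in the residue is $\dd\vol_{g_2}$. Everything else is algebra with globally defined tensors, so no coordinate choice is needed.
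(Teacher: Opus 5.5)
Your proposal is correct and follows essentially the same route as the paper: write $g_1^{ij}\nabla_i^{g_2}c_j$ as the $g_2$-divergence of $g_1^{ij}c_j\partial_i$ plus algebraic terms, obtained from the contracted form of \eqref{eq:inverse-difference} together with $C^i{}_{i\ell}=c_\ell$, and then drop the divergence on the closed manifold. The only cosmetic difference is that the paper first records a pointwise identity for $\mathscr L_n(g_1\mid g_2)$ containing the term $2\nabla_i^{g_2}(g_1^{ij}c_j)$ and only then integrates, whereas you integrate by parts directly.
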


\begin{proof}
Contracting \eqref{eq:inverse-difference} gives
\begin{align*}
\nabla_i^{g_2}g_1^{ij}
&=
-C^i{}_{ik}g_1^{kj}
-
C^j{}_{ik}g_1^{ik}
\\
&=
-c_kg_1^{kj}
-
C^j{}_{ik}g_1^{ik}.
\end{align*}
The second equality uses
$C^i{}_{ik}=C^i{}_{ki}=c_k$.
The product rule therefore yields
\begin{align*}
\nabla_i^{g_2}(g_1^{ij}c_j)
&=
(\nabla_i^{g_2}g_1^{ij})c_j
+
g_1^{ij}\nabla_i^{g_2}c_j
\\
&=
-g_1^{ij}c_ic_j
-
g_1^{ij}C^r{}_{ij}c_r
+
g_1^{ij}\nabla_i^{g_2}c_j,
\end{align*}
where dummy indices have been renamed in the second line.
Consequently,
\begin{align*}
&2g_1^{ij}\nabla_i^{g_2}c_j
-
g_1^{ij}c_ic_j
-
2g_1^{ij}C^r{}_{ij}c_r
=
2\nabla_i^{g_2}(g_1^{ij}c_j)
+
g_1^{ij}c_ic_j.
\end{align*}
Substituting this identity into \eqref{eq:local-density},
we obtain
\begin{align*}
\mathscr L_n(g_1\mid g_2)
={}&
R_{g_1}
+
g_1^{ij}
\bigl(
c_ic_j-C^k{}_{i\ell}C^\ell{}_{jk}
\bigr)
-
\frac23g_1^{ij}\Ric(g_2)_{ij}
-
\frac16R_{g_2}\,g_{2,ij}g_1^{ij}
+
2\nabla_i^{g_2}(g_1^{ij}c_j).
\end{align*}

The last term is twice the $g_2$-divergence of the globally
defined vector field $g_1^{ij}c_j\partial_i$.
Since $M$ is closed, the divergence theorem gives
\[
\int_M
\nabla_i^{g_2}(g_1^{ij}c_j)\dd\vol_{g_2}
=
0.
\]
Integrating the preceding expression for
$\mathscr L_n(g_1\mid g_2)$ and applying
Theorem~\ref{thm:local-closed} proves
\eqref{eq:global-main}.
\end{proof}

\begin{corollary}[Diagonal limit]
\label{cor:diagonal}
If $g_1=g_2=g$, then
\[
\Wres(D_g^{-n+2})
=
-\frac{(n-2)2^n}{24}\nu_{n-1}
\int_M R_g\dd\vol_g.
\]
\end{corollary}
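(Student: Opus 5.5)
The plan is to specialize Theorem~\ref{thm:global-closed} (or equivalently Theorem~\ref{thm:local-closed}) to $g_1=g_2=g$. We then identify the left-hand side with $\Wres(D_g^{-n+2})$.

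For the left-hand side, $D_g^2D_g^{-n}$ and $D_g^{-n+2}$ coincide on the orthogonal complement of $\ker D_g$. Both vanish on $\ker D_g$, since $D_g^2$ annihilates the kernel, and in any case the two operators differ at most by a smoothing operator. Hence their residues agree, by the convention recalled in Section~\ref{sec:conventions}.

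For the right-hand side, the connection difference vanishes identically when the metrics coincide: $C=\nabla^g-\nabla^g=0$, so $c_i=0$ and every term quadratic in $C$ in \eqref{eq:global-main} disappears. The remaining terms are evaluated with $g_1=g_2=g$. The contraction $g^{ij}\Ric(g)_{ij}$ equals $R_g$, and $g_{ij}g^{ij}=n$. The integrand therefore becomes
\[
R_g-\frac23R_g-\frac n6R_g=\frac{2-n}{6}R_g .
\]

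Multiplying by the prefactor $2^{n-2}\nu_{n-1}$ gives
\[
-\frac{(n-2)2^{n-2}}{6}\nu_{n-1}\int_MR_g\dd\vol_g=-\frac{(n-2)2^n}{24}\nu_{n-1}\int_MR_g\dd\vol_g,
\]
which is the asserted formula.

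There is no genuine obstacle in this argument. The only point needing care is the identification $D_g^2D_g^{-n}\equiv D_g^{-n+2}$ modulo $\Psi^{-\infty}$, and this follows directly from the spectral-calculus convention for negative powers.
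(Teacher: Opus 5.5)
Your proposal is correct and follows the paper's proof. You specialize the closed bimetric formula using $C=0$, $c_i=0$, $g^{ij}\Ric(g)_{ij}=R_g$ and $g_{ij}g^{ij}=n$ to obtain $\mathscr L_n(g\mid g)=-\frac{n-2}{6}R_g$, and you identify $D_g^2D_g^{-n}$ with $D_g^{-n+2}$ modulo smoothing operators. The paper cites Theorem~\ref{thm:local-closed} rather than Theorem~\ref{thm:global-closed}, which is immaterial because the divergence term vanishes when $C=0$. For $n=2$ your claim that both operators vanish on $\ker D_g$ is not literally accurate, but your fallback that they differ by a smoothing operator covers that case.
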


\begin{proof}
When the two metrics coincide, $C=0$ and $c=0$.
Moreover,
$g^{ij}\Ric(g)_{ij}=R_g$ and $g_{ij}g^{ij}=n$.
Thus
\[
\mathscr L_n(g\mid g)
=
R_g-\frac23R_g-\frac n6R_g
=
-\frac{n-2}{6}R_g.
\]
The parametrix identities give
$D_g^2D_g^{-n}=D_g^{-n+2}$ modulo smoothing operators,
which have zero noncommutative residue.
Theorem~\ref{thm:local-closed} therefore yields the stated
formula, since
$2^{n-2}(n-2)/6=(n-2)2^n/24$.
For $n=2$, it reduces to $\Wres(\Id_E)=0$.
\end{proof}

For brevity, write
$\mathcal K_n(g_1\mid g_2)
:=\Wres(D_{g_1}^2D_{g_2}^{-n})$.
The metric $g_1$ determines the differential factor,
whereas $g_2$ determines the elliptic negative power and
the volume form used to express the local residue density.

\begin{corollary}[Constant scaling]
\label{cor:scaling}
For a constant $\lambda>0$,
\[
\mathcal K_n(\lambda^2g_2\mid g_2)
=
\lambda^{-2}\Wres(D_{g_2}^{-n+2}).
\]
\end{corollary}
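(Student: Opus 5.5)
Set $g_1=\lambda^2g_2$ with $\lambda>0$ constant and evaluate Theorem~\ref{thm:local-closed}, or equivalently Theorem~\ref{thm:global-closed}, term by term. Since $\lambda$ is constant, the Christoffel symbols of $g_1$ and $g_2$ agree: in \eqref{eq:C-metric} one has $\nabla^{g_2}g_1=\lambda^2\nabla^{g_2}g_2=0$, so $C=0$ and $c=0$. All connection-difference terms in \eqref{eq:local-density} therefore vanish, including the derivative term $2g_1^{ij}\nabla_i^{g_2}c_j$.

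The remaining terms are computed from the scaling rules for the metric quantities:
\begin{itemize}
\item $g_1^{ij}=\lambda^{-2}g_2^{ij}$.
\item The Riemann tensor $R^{g}(X,Y)Z$ is unchanged, so $\Ric(g_1)=\Ric(g_2)$ and $R_{g_1}=g_1^{bd}\Ric(g_1)_{bd}=\lambda^{-2}R_{g_2}$.
\item $g_1^{ij}\Ric(g_2)_{ij}=\lambda^{-2}R_{g_2}$.
\item $g_{2,ij}g_1^{ij}=\lambda^{-2}n$.
\end{itemize}
Substituting these gives
\[
\mathscr L_n(\lambda^2g_2\mid g_2)
=\lambda^{-2}\Bigl(R_{g_2}-\tfrac23R_{g_2}-\tfrac n6R_{g_2}\Bigr)
=\lambda^{-2}\mathscr L_n(g_2\mid g_2).
\]
This is $\lambda^{-2}$ times the diagonal density $-\tfrac{n-2}{6}R_{g_2}$ computed in the proof of Corollary~\ref{cor:diagonal}.

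Integrate against $\dd\vol_{g_2}$. The volume form is that of the denominator metric, so it is not rescaled. Theorem~\ref{thm:local-closed} then gives $\mathcal K_n(\lambda^2g_2\mid g_2)=\lambda^{-2}\mathcal K_n(g_2\mid g_2)$. By Corollary~\ref{cor:diagonal}, $\mathcal K_n(g_2\mid g_2)=\Wres(D_{g_2}^{-n+2})$, since $D_{g_2}^2D_{g_2}^{-n}$ differs from $D_{g_2}^{-n+2}$ by a smoothing operator.

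A more operator-theoretic cross-check is also available. The codifferential scales as $\delta_{\lambda^2g}=\lambda^{-2}\delta_g$ on forms. Since $d^2=\delta^2=0$, this gives $D_{\lambda^2g_2}^2=d\delta_{\lambda^2 g_2}+\delta_{\lambda^2 g_2}d=\lambda^{-2}D_{g_2}^2$ directly, and the claim follows at once.

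There is no serious obstacle here. The only point needing care is the scaling of the curvature quantities: the $(1,3)$ curvature tensor and the Ricci tensor are scale-invariant, while the scalar curvature picks up $\lambda^{-2}$. The $\delta$-scaling argument avoids even this bookkeeping.
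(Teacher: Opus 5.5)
Your proposal is correct, but your main argument is not the one the paper uses. The paper's proof is exactly your closing ``cross-check'': it uses $\delta_{\lambda^2g_2}=\lambda^{-2}\delta_{g_2}$ and $d^2=\delta_{g_2}^2=0$ to get $D_{\lambda^2g_2}^2=\lambda^{-2}D_{g_2}^2$. It then concludes by linearity of $\Wres$ and the parametrix identity $D_{g_2}^2D_{g_2}^{-n}=D_{g_2}^{-n+2}$ modulo $\Psi^{-\infty}$.

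Your primary route instead specializes the density $\mathscr L_n$ of Theorem~\ref{thm:local-closed}. For homothetic metrics $C=0$, and each surviving term ($R_{g_1}$, $g_1^{ij}\Ric(g_2)_{ij}$, $R_{g_2}g_{2,ij}g_1^{ij}$) is $\lambda^{-2}$ times its diagonal value. All of your scaling rules are right.

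The trade-off is as follows. The paper's argument is elementary and independent of the symbol calculus of Section~\ref{sec:universal}. It also gives an exact operator identity $D_{\lambda^2g_2}^2D_{g_2}^{-n}=\lambda^{-2}D_{g_2}^2D_{g_2}^{-n}$, so the local residue densities agree pointwise with no curvature bookkeeping. Your route depends on the full strength of Theorem~\ref{thm:local-closed}, and in return serves as a consistency check of the bimetric density. That check is weak, however: any linear combination of those three scalars scales by $\lambda^{-2}$. It therefore confirms the vanishing of the $C$-terms and the homogeneity of the density, but not the specific coefficients $-\tfrac23$ and $-\tfrac16$.
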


\begin{proof}
Under a constant rescaling of the metric,
$\delta_{\lambda^2g_2}=\lambda^{-2}\delta_{g_2}$.
Since $d^2=0$ and $\delta_{g_2}^2=0$, it follows that
\begin{align*}
D_{\lambda^2g_2}^2
&=
(d+\lambda^{-2}\delta_{g_2})^2
\\
&=
\lambda^{-2}
(d\delta_{g_2}+\delta_{g_2}d)
\\
&=
\lambda^{-2}D_{g_2}^2.
\end{align*}
By linearity of the noncommutative residue and the
parametrix identities,
\begin{align*}
\mathcal K_n(\lambda^2g_2\mid g_2)
&=
\Wres\bigl(
\lambda^{-2}D_{g_2}^2D_{g_2}^{-n}
\bigr)
\\
&=
\lambda^{-2}\Wres(D_{g_2}^{-n+2}),
\end{align*}
as claimed.
\end{proof}

The scaling relations also exhibit the asymmetry of the two
metrics. Indeed, because $n=2m$, the identity
$D_{\lambda^2g}^2=\lambda^{-2}D_g^2$ implies
\[
D_{\lambda^2g}^{-n}
=
(D_{\lambda^2g}^2)^{-m}
=
\lambda^{2m}(D_g^2)^{-m}
=
\lambda^nD_g^{-n}
\]
modulo smoothing operators. Hence
\[
\mathcal K_n(g\mid\lambda^2g)
=
\lambda^n\Wres(D_g^{-n+2}),
\]
whereas Corollary~\ref{cor:scaling} gives
$\mathcal K_n(\lambda^2g\mid g)
=\lambda^{-2}\Wres(D_g^{-n+2})$.
If $n\geq4$, $\lambda\neq1$, and
$\int_M R_g\dd\vol_g\neq0$,
Corollary~\ref{cor:diagonal} shows that these two values
are different. Thus $\mathcal K_n$ need not be symmetric
in its two metric arguments.

\begin{remark}[Pseudo-Riemannian numerator]
\label{rem:pseudo-Riemannian-numerator}
Theorems~\ref{thm:local-closed} and
\ref{thm:global-closed} remain valid when $g_1$ is a smooth
pseudo-Riemannian metric, including a Lorentzian metric,
while $g_2$ remains Riemannian. In this case, define
\[
\delta_{g_1}
=
-g_1^{ij}\iotaop(\partial_i)
\nabla_{\partial_j}^{\Lambda,g_1},
\qquad
D_{g_1}=d+\delta_{g_1}.
\]
The square $D_{g_1}^2$ has scalar principal symbol
$\sigma_2(D_{g_1}^2)(x,\xi)
=g_1^{ij}(x)\xi_i\xi_j\Id_E$.
Proposition~\ref{prop:compression} requires no ellipticity
of the inserted operator. Since only the elliptic operator
$D_{g_2}$ is inverted, $D_{g_1}^2D_{g_2}^{-n}$ remains
a classical pseudodifferential operator.

The numerator identities, including
\eqref{eq:trace-curvature} and \eqref{eq:trH}, also extend
to arbitrary nondegenerate signature. In the coordinate
exterior frame, their coefficients are rational expressions
in the metric components and their derivatives up to
order two, with denominators given by powers of
$\det(g_{1,ij})$. Clearing denominators gives polynomial
identities. Their validity for positive-definite metric
values and arbitrary first and second derivatives therefore
implies their validity for every nondegenerate metric.
\end{remark}

We recall the spectral interpretation of the preceding formula.
A unital spectral triple $(\mathcal A,\mathcal H,D)$ consists
of a unital $*$-algebra $\mathcal A$ faithfully represented
by bounded operators on a Hilbert space $\mathcal H$ and
a densely defined self-adjoint operator $D$ with compact
resolvent, such that each $a\in\mathcal A$ preserves
$\operatorname{Dom}(D)$ and $[D,a]$ extends to a bounded
operator; see \cite{Connes1994}.

In the classical setting considered here, $D$ is a
first-order elliptic differential operator on a vector
bundle over a closed manifold of dimension $n=2m$.
For a classical pseudodifferential operator $O$ on the
same bundle, the noncommutative integral relative to $D$
is the linear functional
\[
O\longmapsto \Wres(OD^{-n}).
\]
Here $D^{-n}=(D^2)^{-m}$ is taken on
$(\ker D)^\perp$ and is set equal to zero on $\ker D$.
Equivalently, one may use a parametrix, since smoothing
operators have zero residue.

For two spectral triples $(\mathcal A,\mathcal H,D_1)$
and $(\mathcal A,\mathcal H,D_2)$ whose operators belong
to the same classical pseudodifferential calculus,
the double spectral Einstein--Hilbert action is
\[
\operatorname{EH}(D_1,D_2)
:=
\Wres(D_1^2D_2^{-n}),
\]
as in \cite[Definition~4.1]{LiuWang2026}.
On the diagonal this gives
$\operatorname{EH}(D,D)=\Wres(D^{-n+2})$.

The corresponding scalar construction uses the
Laplace--Beltrami operators $\Delta^{g_1}$ and
$\Delta^{g_2}$ acting on functions.
For a smooth pseudo-Riemannian metric $g_1$ and a
Riemannian metric $g_2$, the bimetric spectral
Einstein--Hilbert action is defined by
\[
\operatorname{EH}(\Delta^{g_1},\Delta^{g_2})
:=
\Wres\bigl(\Delta^{g_1}(\Delta^{g_2})^{-m}\bigr);
\]
see \cite[Definition~4.2]{LiuWang2026}.
Only the elliptic operator $\Delta^{g_2}$ is inverted.

For the exterior bundle $E$, the reference spectral
triple is
$(C^\infty(M;\mathbb C),L^2(M,E;g_2),D_{g_2})$,
where functions act by multiplication and $D_{g_2}$
denotes its self-adjoint closure.
The noncommutative integral of the differential
insertion $D_{g_1}^2$ is therefore
$\Wres(D_{g_1}^2D_{g_2}^{-n})$.

\begin{definition}[Bimetric Hodge spectral Einstein--Hilbert action]
Let $M^n$ be closed and oriented, with $n=2m$,
and let $g_1$ be a smooth pseudo-Riemannian metric
and $g_2$ a Riemannian metric.
We define the bimetric Hodge spectral Einstein--Hilbert
action by the right-hand side of
\eqref{eq:global-main}.
By Theorem~\ref{thm:global-closed} and
Remark~\ref{rem:pseudo-Riemannian-numerator}, it equals
\[
\mathcal K_n(g_1\mid g_2)
=
\Wres(D_{g_1}^2D_{g_2}^{-n}).
\]
\end{definition}

In this construction, $D_{g_1}^2$ is a differential
insertion; $D_{g_1}$ need not define a second spectral
triple on $L^2(M,E;g_2)$.
The diagonal specialization is given by
Corollary~\ref{cor:diagonal}.

\section{The noncommutative residue for manifolds with boundary}
\label{sec:boundary-framework}

We now consider a compact oriented manifold with smooth boundary,
of even dimension $n=2m\ge4$. We introduce the collar metrics and
the local boundary residue formula, and derive the symbol and
trace identities needed for the two factorizations. All operators
are represented in the same coordinate exterior frame.

\subsection{Boundary geometry and the residue formula}
\label{sec:FGLS-framework}

Let $g^{\partial M}$ be a Riemannian metric on $\partial M$. We assume
that a collar $\partial M\times[0,\varepsilon)$ has coordinates
$(x',x_n)$, where $x_n$ denotes the last coordinate $x^n$ and
increases into $M$, and that
\begin{equation}
\label{eq:collar}
g_r=h_r(x_n)^{-1}g^{\partial M}+dx_n^2,
\qquad
h_r>0,\quad h_r(0)=1,\quad r=1,2.
\end{equation}
Here $g^{\partial M}$ is pulled back from $\partial M$, and the functions
$h_r$ are smooth on $[0,\varepsilon)$. They may be chosen separately
on different boundary components. Both metrics are expressed
in the same collar coordinates and induce the same boundary
metric $g^{\partial M}$. This is the two-metric version of the collar
assumption in \cite[(1.3)]{Wang2007}.

Tangential indices $a,b$ range from $1$ to $n-1$, and
$\partial_n=\partial/\partial x_n$ is the common inward unit
normal. For vectors $X,Y$ tangent to $\partial M$, define
\begin{equation}
\label{eq:mean-curvature}
\mathrm{II}_{g_r}(X,Y)
=g_r(\nabla_X^{g_r}\partial_n,Y),
\qquad
K_{g_r}=\operatorname{tr}_{g^{\partial M}}\mathrm{II}_{g_r}.
\end{equation}
Thus $K_{g_r}$ is the trace, rather than the average, of the
principal curvatures. Since $g_{r,an}=0$ and $g_{r,nn}=1$,
the Koszul formula gives
\[
2g_r(\nabla_{\partial_a}^{g_r}\partial_n,\partial_b)
=\partial_n g_{r,ab}.
\]
Using $\partial_n(h_r^{-1})|_0=-h_r'(0)$, we obtain
\begin{equation}
\label{eq:warped-mean-curvature}
\mathrm{II}_{g_r}=-\frac{h_r'(0)}2g^{\partial M},
\qquad
K_{g_r}=-\frac{n-1}{2}h_r'(0).
\end{equation}
In particular,
$K_{g_1}-K_{g_2}=-(n-1)(h_1'(0)-h_2'(0))/2$.

Extend both metrics smoothly across $\partial M$ to a closed
manifold containing $M$. All negative powers below are
parametrices on this extension and are considered modulo
$\Psi^{-\infty}$. The residues do not depend on the chosen
extensions or parametrices: the relevant interior symbols and
boundary jets are determined by the original metrics, and
smoothing remainders have zero residue.

For an operator $A$ satisfying the transmission condition,
write $\pi^+A=r^+Ae^+$, where $e^+$ extends a section by zero
and $r^+$ restricts it to the interior of $M$. We use the
noncommutative residue of \cite{FGLS1996} in the Boutet de Monvel
calculus. For $s\in\{1,2\}$, set
\[
P_s=D_{g_1}^2D_{g_2}^{-s},
\qquad
Q_s=D_{g_2}^{-n+s},
\qquad
\mathcal R_n^{(s)}(g_1\mid g_2)
=\Wresb[\pi^+P_s\circ\pi^+Q_s].
\]
The operators $P_s$ and $Q_s$ have orders $2-s$ and $-n+s$,
respectively. Differential operators and their integer-order
parametrices satisfy the transmission condition, and this
condition is preserved under composition. Hence it holds
for both factors.

We write
\[
P_s
=
D_{g_2}^{2-s}
+(D_{g_1}^2-D_{g_2}^2)D_{g_2}^{-s}
\quad\bmod\Psi^{-\infty}.
\]
For $s=1,2$, the first term is a differential operator and
produces no singular Green composition term. Thus the
singular Green contribution comes entirely from the second
term; in particular,
\[
\pi^+(D_{g_2}^{2-s})\circ\pi^+Q_s
=\pi^+(D_{g_2}^{2-s}Q_s).
\]
Indeed, a differential operator is local, so its action on an
extension by zero agrees in the interior with its action on
the original section. Consequently, modulo smoothing terms,
\begin{align*}
&\pi^+P_s\circ\pi^+Q_s-\pi^+(P_sQ_s)=
\pi^+\bigl((D_{g_1}^2-D_{g_2}^2)D_{g_2}^{-s}\bigr)
   \circ\pi^+Q_s
-\pi^+\bigl((D_{g_1}^2-D_{g_2}^2)D_{g_2}^{-n}\bigr).
\end{align*}

The collar assumption also gives
\[
\sigma_2(D_{g_1}^2-D_{g_2}^2)
=(h_1-h_2)|\xi'|_{g^{\partial M}}^2\Id_E.
\]
Thus the second-order normal derivatives cancel, and there
are no mixed second-order normal derivatives. The difference
$D_{g_1}^2-D_{g_2}^2$ contains at most one normal derivative.
After composition with $D_{g_2}^{-s}$, its normal symbol order
is therefore at most $1-s\le0$. Here normal order refers to
growth in $\xi_n$ with the tangential covariable fixed, rather
than to the total pseudodifferential order. The remaining
composition is consequently covered by the normal symbol
calculus in \cite{Boutet1971,Grubb1996}.

Write a covector as $\xi=\xi'+\xi_n dx_n$, where
$\xi'=\sum_{a<n}\xi_a dx^a$. We denote the Hardy projection
in the normal covariable by $\pi_{\xi_n}^+$. For the rational
symbols occurring below, this projection retains the principal
parts at poles in the upper half-plane and annihilates the
polynomial part. The partial fraction decompositions
\[
\frac1{1+\xi_n^2}
=\frac1{2\ii}
 \left(\frac1{\xi_n-\ii}-\frac1{\xi_n+\ii}\right),
\qquad
\frac{\xi_n}{1+\xi_n^2}
=\frac12
 \left(\frac1{\xi_n-\ii}+\frac1{\xi_n+\ii}\right)
\]
give
\begin{equation}
\label{eq:Hardy}
\pi_{\xi_n}^+\frac1{1+\xi_n^2}
=\frac1{2\ii(\xi_n-\ii)},
\qquad
\pi_{\xi_n}^+\frac{\xi_n}{1+\xi_n^2}
=\frac1{2(\xi_n-\ii)}.
\end{equation}
This projection on symbols is distinct from the truncation
of operators. In particular, the vanishing of the Hardy
projection of a polynomial symbol does not mean that its
operator truncation vanishes. The differential part separated
above contributes to the interior product, but not to the
singular Green term. We write
$\sigma_r^+(A)=\pi_{\xi_n}^+\sigma_r(A)$.

The composition formula and the noncommutative residue give
the following decomposition, in the convention of
\cite[(2.2.4)--(2.2.5)]{Wang2007}:
\begin{equation}
\label{eq:FGLS-split}
\mathcal R_n^{(s)}
=
\int_M\int_{|\xi|_{g_2}=1}
\trE\sigma_{-n}(P_sQ_s)
\,\dd S_{g_2}(\xi)\,\dd\vol_{g_2}
+
\int_{\partial M}\Phi_s(x')\,\dd\vol_{g^{\partial M}}(x').
\end{equation}
The local boundary coefficient is
\begin{align}
\Phi_s(x')={}&
\int_{|\xi'|_{g^{\partial M}}=1}\int_{\mathbb R}
\sum
\frac{(-\ii)^{|\alpha|+j+k+1}}
     {\alpha!(j+k+1)!}
\trE\Bigl[
\partial_{x_n}^{\,j}
\partial_{\xi'}^\alpha
\partial_{\xi_n}^{\,k}
\sigma_r^+(P_s)
\notag\\
&\hspace{30mm}\cdot
\partial_{x'}^\alpha
\partial_{\xi_n}^{\,j+1}
\partial_{x_n}^{\,k}
\sigma_\ell(Q_s)
\Bigr]_{x_n=0}
\,\dd\xi_n\,\dd S_{g^{\partial M}}(\xi').
\label{eq:FGLS}
\end{align}
Here $\alpha\in\mathbb N_0^{n-1}$, $j,k\in\mathbb N_0$, and
$r,\ell\in\mathbb Z$. The sum is taken over
\begin{equation}
\label{eq:defect}
r\le2-s,
\qquad
\ell\le-n+s,
\qquad
r+\ell-|\alpha|-j-k-1=-n.
\end{equation}
For Hodge operators with different factor orders, the same
local formula appears in
\cite[(3.18)]{LiuWuWang2022}.
The measure $\dd S_{g^{\partial M}}(\xi')$ is the tangential unit-sphere
measure. Thus $\Phi_s$ is the scalar coefficient after the
normal and tangential sphere integrations, while
$\dd\vol_{g^{\partial M}}$ is written separately in
\eqref{eq:FGLS-split}. All derivatives in \eqref{eq:FGLS}
are taken before setting $x_n=0$ and $|\xi'|_{g^{\partial M}}=1$.

The order condition can be rewritten as
\[
\bigl((2-s)-r\bigr)
+\bigl((-n+s)-\ell\bigr)
+|\alpha|+j+k=1.
\]
Each term on the left is a nonnegative integer. Hence exactly
one of them equals one, and the others vanish. This gives the
following five cases:
\[
\begin{array}{c|ccccc|c}
\text{case}
&r&\ell&j&k&|\alpha|
&\displaystyle
\frac{(-\ii)^{|\alpha|+j+k+1}}{\alpha!(j+k+1)!}
\\ \hline
(\mathrm a)(\mathrm I)
&2-s&-n+s&0&0&1&-1\\
(\mathrm a)(\mathrm {II})
&2-s&-n+s&1&0&0&-1/2\\
(\mathrm a)(\mathrm {III})
&2-s&-n+s&0&1&0&-1/2\\
(\mathrm b)
&1-s&-n+s&0&0&0&-\ii\\
(\mathrm c)
&2-s&-n+s-1&0&0&0&-\ii
\end{array}
\]
We denote their contributions, in this order, by
$\Phi_j^{(s)}$, $j=1,\ldots,5$. Each contribution includes
both integrations in \eqref{eq:FGLS}, so that
$\Phi_s=\sum_{j=1}^5\Phi_j^{(s)}$.

Finally, the parametrix identities give
$P_sQ_s=D_{g_1}^2D_{g_2}^{-n}$ modulo smoothing operators.
The local interior computation therefore yields
\begin{equation}
\label{eq:boundary-interior}
\int_M\int_{|\xi|_{g_2}=1}
\trE\sigma_{-n}(P_sQ_s)
\,\dd S_{g_2}(\xi)\,\dd\vol_{g_2}
=
2^{n-2}\nu_{n-1}
\int_M\mathscr L_n(g_1\mid g_2)\,\dd\vol_{g_2}.
\end{equation}
Here $\mathscr L_n$ is the local expression
\eqref{eq:local-density}. Its divergence term is retained,
since $M$ has boundary.

\subsection{Boundary connections, symbols, and exterior traces}

Fix $x_0\in\partial M$ and choose $g^{\partial M}$-normal coordinates
$x'$ centered there. Unless an argument is displayed, the
coefficients in this subsection are evaluated at $(x_0,0)$.
At this point the two covector Clifford actions coincide;
we denote their common value by $c$. Thus
$c(\xi')=\sum_{a<n}\xi_a c(dx^a)$ and
$c(\xi)=c(\xi')+\xi_n c(dx_n)$.
Away from the boundary, the Clifford actions retain their
metric subscripts.

\begin{lemma}[Boundary connection and first-order symbols]
\label{lem:warped-symbols}
For $r=1,2$, the possibly nonzero Christoffel coefficients
at $(x_0,0)$ are
\begin{equation}
\label{eq:boundary-Christoffel}
\Gamma(g_r)^n{}_{ab}
=\frac{h_r'(0)}2\delta_{ab},
\qquad
\Gamma(g_r)^a{}_{bn}
=\Gamma(g_r)^a{}_{nb}
=-\frac{h_r'(0)}2\delta^a_b.
\end{equation}
In the coordinate exterior frame, write
$\sigma_1(D_{g_r}^2)=\ii\sum_iG_r^i\xi_i$. Then
\begin{align}
G_r^a
&=
h_r'(0)
\bigl(
\eps(dx^a)\iotaop(\partial_n)
-\eps(dx_n)\iotaop(\partial_a)
\bigr),
\notag\\
G_r^n
&=
h_r'(0)
\left(
\frac{n-1}{2}\Id_E
-\sum_{a<n}\eps(dx^a)\iotaop(\partial_a)
\right).
\label{eq:warped-G}
\end{align}
Moreover,
\begin{align}
\sigma_0(D_{g_r})
&=
h_r'(0)
\left(
\frac{n-1}{2}\Id_E
-\sum_{a<n}\eps(dx^a)\iotaop(\partial_a)
\right)\iotaop(\partial_n),
\notag\\
\partial_{x_n}c_{g_r}(\xi)
&=
-h_r'(0)\sum_{a<n}\xi_a\iotaop(\partial_a).
\label{eq:boundary-Dirac}
\end{align}
The last derivative is taken with the coordinate covector
components $\xi_i$ held fixed.
\end{lemma}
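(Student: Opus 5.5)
The plan is to compute everything directly from the warped collar form \eqref{eq:collar} at the point $(x_0,0)$, using $g^{\partial M}$-normal coordinates $x'$. In these coordinates $g^{\partial M}_{ab}(x_0)=\delta_{ab}$ and $\partial_c g^{\partial M}_{ab}(x_0)=0$, and $h_r(0)=1$. The symbol formulas then follow by substituting into the general expressions already derived in Section~\ref{sec:numerator}: $G^k=-2g^{ik}\omega_i+g^{ij}\Gamma^k{}_{ij}\Id_E$, and the connection matrix \eqref{eq:omega} of Lemma~\ref{lem:exterior-connection}. Those formulas were derived for an arbitrary coordinate chart and an arbitrary metric, so they apply verbatim to $g_r$ in the collar coordinates.

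The first step is the Christoffel symbols. Since $g_{r,an}=0$ and $g_{r,nn}=1$, and all tangential derivatives of $g_{r,ab}=h_r^{-1}g^{\partial M}_{ab}$ vanish at $(x_0,0)$, the only surviving metric derivative is $\partial_n g_{r,ab}=-h_r'(0)\delta_{ab}$. The Koszul formula then gives $\Gamma(g_r)^n{}_{ab}=-\tfrac12\partial_n g_{r,ab}=\tfrac{h_r'(0)}2\delta_{ab}$ and $\Gamma(g_r)^a{}_{bn}=\tfrac12 g_r^{ac}\partial_n g_{r,cb}=-\tfrac{h_r'(0)}2\delta^a_b$. All remaining components involve either a tangential derivative or a derivative of $g_{r,nn}$ or $g_{r,an}$, and so vanish. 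This proves \eqref{eq:boundary-Christoffel}.

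The second step inserts these symbols into \eqref{eq:omega}. This gives
\[
\omega_a=\tfrac{h_r'(0)}2\bigl(-\eps(dx^a)\iotaop(\partial_n)+\eps(dx_n)\iotaop(\partial_a)\bigr),
\qquad
\omega_n=\tfrac{h_r'(0)}2\sum_{a<n}\eps(dx^a)\iotaop(\partial_a).
\]
Since $g_r^{ij}=\delta^{ij}$ at the point, $G^a=-2\omega_a$: the trace term $g^{ij}\Gamma^a{}_{ij}$ vanishes because $\Gamma^a{}_{bc}=\Gamma^a{}_{nn}=0$. In the normal direction, $G^n=-2\omega_n+\sum_{a<n}\Gamma^n{}_{aa}\Id_E$, which yields the $\tfrac{n-1}2h_r'(0)$ term. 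This gives \eqref{eq:warped-G}.

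For $\sigma_0(D_{g_r})$ I will write
\[
D_{g_r}=\eps(dx^i)\partial_i-g_r^{ij}\iotaop(\partial_i)\nabla^{\Lambda,g_r}_j,
\]
with the codifferential written as a contracted covariant derivative, as in Remark~\ref{rem:pseudo-Riemannian-numerator}. Its zero-order part at the point is $-\sum_i\iotaop(\partial_i)\omega_i$. This is then simplified with \eqref{eq:CAR}.
\begin{itemize}
\item In the tangential sum, $\iotaop(\partial_a)\iotaop(\partial_a)=0$ and $\sum_{a<n}\iotaop(\partial_a)\eps(dx^a)=(n-1)\Id_E-\sum_{a<n}\eps(dx^a)\iotaop(\partial_a)$, so it contributes $\tfrac{h_r'(0)}2\bigl((n-1)-\sum_{a<n}\eps(dx^a)\iotaop(\partial_a)\bigr)\iotaop(\partial_n)$.
\item The normal term $-\iotaop(\partial_n)\omega_n$ becomes $-\tfrac{h_r'(0)}2\sum_{a<n}\eps(dx^a)\iotaop(\partial_a)\iotaop(\partial_n)$ after anticommuting $\iotaop(\partial_n)$ past the pair $\eps(dx^a)\iotaop(\partial_a)$.
\item Adding the two contributions gives the first identity in \eqref{eq:boundary-Dirac}.
\end{itemize}
Finally, $c_{g_r}(\xi)=\eps(\xi)-\iotaop(\xi^{\sharp_{g_r}})$ with $\xi^{\sharp_{g_r}}=g_r^{ij}\xi_i\partial_j$. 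Here $g_r^{ab}=h_r g^{\partial M,ab}$, $g_r^{an}=0$ and $g_r^{nn}=1$. Differentiating in $x_n$ with the $\xi_i$ fixed gives $\partial_{x_n}c_{g_r}(\xi)=-h_r'(0)\sum_{a<n}\xi_a\iotaop(\partial_a)$.

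I expect the main obstacle to be sign and ordering bookkeeping rather than anything conceptual. Two points need care. First, the index placement in \eqref{eq:omega} must be tracked: $\omega_i=-\Gamma^a{}_{ib}\eps(dx^b)\iotaop(\partial_a)$ sums over the upper index $a$, which is contracted with $\iotaop(\partial_a)$. Second, the zero-order term of $\delta_{g_r}$ must be computed in the coordinate frame, where $d$ contributes no zero-order part. I will cross-check these signs by confirming that $\trE$ of $G_r^n$ and of $\sigma_0(D_{g_r})$ are consistent with the trace identity \eqref{eq:lambda2}.
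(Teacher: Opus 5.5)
Your proposal is correct and follows the paper's proof essentially step for step. You obtain the Christoffel symbols from the collar form in $g^{\partial M}$-normal coordinates and the connection matrices $\omega_i^{(r)}$ from Lemma~\ref{lem:exterior-connection}. You then use $G_r^k=-2\omega_k^{(r)}+\sum_i\Gamma(g_r)^k{}_{ii}\Id_E$ from the Weitzenb\"ock expansion, simplify $\sigma_0(D_{g_r})=-\sum_i\iotaop(\partial_i)\omega_i^{(r)}$ with \eqref{eq:CAR}, and differentiate $g_r^{ab}=h_r g^{\partial M,ab}$ inside $c_{g_r}(\xi)$; your intermediate expressions for $\omega_a^{(r)}$, $\omega_n^{(r)}$ and the two pieces of $\sigma_0(D_{g_r})$ agree with the paper's.
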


\begin{proof}
At the chosen point,
\[
g_{r,ab}=\delta_{ab},
\qquad
g_{r,an}=0,
\qquad
g_{r,nn}=1,
\qquad
\partial_n g_{r,ab}=-h_r'(0)\delta_{ab},
\qquad
\partial_n g_r^{ab}=h_r'(0)\delta^{ab}.
\]
All tangential first derivatives of the metric components
vanish, since $h_r$ depends only on $x_n$ and $x'$ is normal
for $g^{\partial M}$. The Christoffel formula consequently gives
\begin{align*}
\Gamma(g_r)^n{}_{ab}
&=
\frac12
\bigl(
\partial_a g_{r,bn}
+\partial_b g_{r,an}
-\partial_n g_{r,ab}
\bigr)
=\frac{h_r'(0)}2\delta_{ab},
\\
\Gamma(g_r)^a{}_{bn}
&=
\frac12\delta^{ac}
\bigl(
\partial_b g_{r,nc}
+\partial_n g_{r,bc}
-\partial_c g_{r,bn}
\bigr)
=-\frac{h_r'(0)}2\delta^a_b.
\end{align*}
Symmetry in the lower indices gives
$\Gamma(g_r)^a{}_{nb}$, and the remaining coefficients vanish.

Write
$\nabla_{\partial_i}^{\Lambda,g_r}
=\partial_i+\omega_i^{(r)}$
in the coordinate exterior frame. By
Lemma~\ref{lem:exterior-connection},
$\omega_i^{(r)}
=-\Gamma(g_r)^a{}_{ib}\eps(dx^b)\iotaop(\partial_a)$.
Substituting \eqref{eq:boundary-Christoffel}, we find
\begin{align*}
\omega_a^{(r)}
&=
-\frac{h_r'(0)}2\eps(dx^a)\iotaop(\partial_n)
+\frac{h_r'(0)}2\eps(dx_n)\iotaop(\partial_a),
\\
\omega_n^{(r)}
&=
\frac{h_r'(0)}2
\sum_{a<n}\eps(dx^a)\iotaop(\partial_a).
\end{align*}
Expansion of the Weitzenb\"ock formula at
$g_r^{ij}=\delta^{ij}$ gives
\[
G_r^k
=-2\omega_k^{(r)}
+\sum_i\Gamma(g_r)^k{}_{ii}\Id_E.
\]
For $k=a<n$, the Christoffel contraction is zero. For $k=n$,
it equals $(n-1)h_r'(0)/2$. These two cases give
\eqref{eq:warped-G}.

For the first-order Hodge operator, the coordinate formulas are
\[
d=\sum_i\eps(dx^i)\partial_i,
\qquad
\delta_{g_r}
=-\sum_{i,j}g_r^{ij}
 \iotaop(\partial_i)
 \bigl(\partial_j+\omega_j^{(r)}\bigr).
\]
Therefore the zero-order coefficient of $D_{g_r}=d+\delta_{g_r}$
at the boundary point is
\begin{align*}
\sigma_0(D_{g_r})
&=
-\sum_i\iotaop(\partial_i)\omega_i^{(r)}
\\
&=
\frac{h_r'(0)}2\sum_{a<n}
\Bigl[
\iotaop(\partial_a)\eps(dx^a)\iotaop(\partial_n)
-\iotaop(\partial_a)\eps(dx_n)\iotaop(\partial_a)
-\iotaop(\partial_n)\eps(dx^a)\iotaop(\partial_a)
\Bigr].
\end{align*}
For each $a<n$, the middle term vanishes:
$\iotaop(\partial_a)\eps(dx_n)\iotaop(\partial_a)
=-\eps(dx_n)\iotaop(\partial_a)^2=0$.
The other two terms satisfy
\begin{align*}
\iotaop(\partial_a)\eps(dx^a)\iotaop(\partial_n)
&=
\bigl(
\Id_E-\eps(dx^a)\iotaop(\partial_a)
\bigr)\iotaop(\partial_n),
\\
\iotaop(\partial_n)\eps(dx^a)\iotaop(\partial_a)
&=
\eps(dx^a)\iotaop(\partial_a)\iotaop(\partial_n).
\end{align*}
Their difference is
$\bigl(\Id_E-2\eps(dx^a)\iotaop(\partial_a)\bigr)
\iotaop(\partial_n)$.
Summing over $a<n$ proves the first identity in
\eqref{eq:boundary-Dirac}.

In the coordinate exterior frame, exterior multiplication by
$dx^i$ and contraction by $\partial_i$ have constant matrices.
The covector Clifford action is
\[
c_{g_r}(\xi)
=\eps(\xi)-\sum_{i,j}g_r^{ij}\xi_i\iotaop(\partial_j).
\]
Thus, with $\xi_i$ fixed,
\[
\partial_n c_{g_r}(\xi)
=-\sum_{i,j}(\partial_n g_r^{ij})
  \xi_i\iotaop(\partial_j)
=-h_r'(0)\sum_{a<n}\xi_a\iotaop(\partial_a).
\]
This proves the second identity in \eqref{eq:boundary-Dirac}.
\end{proof}

\begin{lemma}[Boundary traces]
\label{lem:boundary-traces}
At $(x_0,0)$, for $r=1,2$ and $|\xi'|_{g^{\partial M}}=1$,
\begin{align}
\trE G_r^i&=0,
\notag\\
\trE\bigl[G_r^n c(\xi')c(dx_n)\bigr]&=0,
\notag\\
\sum_{a<n}\xi_a
\trE\bigl[G_r^a c(\xi')c(dx_n)\bigr]
&=2^{n-1}h_r'(0).
\label{eq:boundary-traces}
\end{align}
\end{lemma}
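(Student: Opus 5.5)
The plan is to reduce every trace to the two exterior trace identities \eqref{eq:lambda2} and \eqref{eq:lambda4}, plus a few elementary trace facts, using the explicit forms \eqref{eq:warped-G}. At $(x_0,0)$ the metrics are Euclidean in the chosen frame, so $c(dx^i)=\eps(dx^i)-\iotaop(\partial_i)$.

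\emph{First identity.} For $G_r^a$, apply \eqref{eq:lambda2} with $a\neq n$: the trace of $\eps(dx^a)\iotaop(\partial_n)$ and of $\eps(dx_n)\iotaop(\partial_a)$ is $2^{n-1}\delta^a{}_n=0$. For $G_r^n$, the trace is
\[
h_r'(0)\Bigl(\tfrac{n-1}{2}2^n-(n-1)2^{n-1}\Bigr)=0.
\]
This gives $\trE G_r^i=0$.

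\emph{Second identity.} Expand
\[
c(\xi')c(dx_n)=\sum_{b<n}\xi_b\bigl(\eps(dx^b)-\iotaop(\partial_b)\bigr)\bigl(\eps(dx_n)-\iotaop(\partial_n)\bigr).
\]
Then $G_r^n$ is a combination of $\Id_E$ and the degree-preserving operators $\eps(dx^a)\iotaop(\partial_a)$, so it preserves form degree. The product $c(dx^b)c(dx_n)$ splits into pieces shifting degree by $+2,0,0,-2$. Only the two degree-zero pieces, $-\eps(dx^b)\iotaop(\partial_n)$ and $-\iotaop(\partial_b)\eps(dx_n)=\eps(dx_n)\iotaop(\partial_b)$, can have nonzero trace against $G_r^n$. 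Each is a product of an exterior multiplication and a contraction with distinct indices $b\neq n$.

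A cleaner route is a parity argument. Conjugation by the automorphism $dx_n\mapsto -dx_n$ (the reflection in the normal direction) fixes $G_r^n$ and $c(\xi')$ but flips the sign of $c(dx_n)$. The trace is invariant under conjugation, so it equals its own negative and vanishes. Concretely, this reflection is the induced action on $E$ of $x_n\mapsto -x_n$; conjugation by it sends $\eps(dx_n)\mapsto-\eps(dx_n)$ and $\iotaop(\partial_n)\mapsto-\iotaop(\partial_n)$ while fixing tangential operators.

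\emph{Third identity.} Again expand in the four operator products $\eps\eps$, $\eps\iotaop$, $\iotaop\eps$, $\iotaop\iotaop$ and use the degree argument. In
\[
G_r^a=h_r'(0)\bigl(\eps(dx^a)\iotaop(\partial_n)-\eps(dx_n)\iotaop(\partial_a)\bigr),
\]
both terms preserve degree, so only the degree-zero parts of $c(dx^b)c(dx_n)$ survive, namely $-\eps(dx^b)\iotaop(\partial_n)+\eps(dx_n)\iotaop(\partial_b)$. By \eqref{eq:lambda4}:

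\begin{itemize}
\item $\trE[\eps(dx^a)\iotaop(\partial_n)\,\eps(dx_n)\iotaop(\partial_b)]=2^{n-2}\delta^a{}_b$, since the $\delta^a{}_n$ term vanishes.
\item $\trE[\eps(dx_n)\iotaop(\partial_a)\,\eps(dx^b)\iotaop(\partial_n)]=2^{n-2}\delta^b{}_a$.
\item The remaining two cross terms, such as $\eps(dx^a)\iotaop(\partial_n)\eps(dx^b)\iotaop(\partial_n)$, give zero because $\delta^a{}_n=\delta^b{}_n=0$ and $\delta^n{}_n\delta^b{}_n=0$.
\end{itemize}

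Taking signs into account, $\trE[G_r^a c(dx^b)c(dx_n)]=h_r'(0)\,2^{n-1}\delta^{ab}$. Contracting with $\xi_a\xi_b$ and using $|\xi'|=1$ gives $2^{n-1}h_r'(0)$.

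The main obstacle is sign bookkeeping in this last step, in particular the ordering inside \eqref{eq:lambda4} and the sign of the $\iotaop\eps$ product. I would double-check it on the case $n=2$ by explicit $4\times4$ matrices.
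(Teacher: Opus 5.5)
Your proposal is correct and is essentially the paper's argument: the first identity is the same direct application of \eqref{eq:lambda2}, and your reflection argument for the second is the paper's own observation in conjugation form. The paper notes that $G_r^n$ preserves whether $dx_n$ occurs in a basis form, while every term of $c(\xi')c(dx_n)$ changes it; conjugating by the reflection is exactly this $\mathbb Z_2$-grading. For the third identity, discarding the degree-shifting pieces and applying \eqref{eq:lambda4} to the four surviving products does give $h_r'(0)\,2^{n-1}\delta_{ab}$ with the signs you state, which is the paper's value. The paper instead disposes of $a\neq b$ by an index-set argument and reduces $a=b$ via \eqref{eq:CAR}; the result is the same, so your $n=2$ sign check is unnecessary.
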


\begin{proof}
For $a<n$, \eqref{eq:lambda2} gives
$\trE(\eps(dx^a)\iotaop(\partial_n))
=\trE(\eps(dx_n)\iotaop(\partial_a))=0$.
Hence $\trE G_r^a=0$. For the normal coefficient,
\[
\trE G_r^n
=
h_r'(0)
\left(
\frac{n-1}{2}2^n-(n-1)2^{n-1}
\right)
=0.
\]

The endomorphisms $\Id_E$ and
$\eps(dx^a)\iotaop(\partial_a)$ are diagonal in the coordinate
exterior basis. Every nonzero term of $c(\xi')c(dx_n)$ changes
whether $dx_n$ occurs in a basis form. Multiplication by
$G_r^n$ does not change this property. Thus
$G_r^n c(\xi')c(dx_n)$ has zero diagonal entries, proving
the second identity.

For the last identity, we first compute, for $a,b<n$,
\[
\trE\Bigl[
\bigl(
\eps(dx^a)\iotaop(\partial_n)
-\eps(dx_n)\iotaop(\partial_a)
\bigr)c(dx^b)c(dx_n)
\Bigr]
=2^{n-1}\delta_{ab}.
\]
When $a\ne b$, every nonzero image changes the tangential
index set, so the trace vanishes. When $a=b$, expansion
using \eqref{eq:CAR} reduces the operator inside the trace to
\[
\eps(dx^a)\iotaop(\partial_a)
+\eps(dx_n)\iotaop(\partial_n)
-2\eps(dx^a)\iotaop(\partial_a)
   \eps(dx_n)\iotaop(\partial_n).
\]
Its trace, by \eqref{eq:lambda2} and \eqref{eq:lambda4}, is
$2^{n-1}+2^{n-1}-2\cdot2^{n-2}=2^{n-1}$.
Consequently,
\begin{align*}
\sum_{a<n}\xi_a
\trE\bigl[G_r^a c(\xi')c(dx_n)\bigr]
&=
2^{n-1}h_r'(0)
\sum_{a,b<n}\xi_a\xi_b\delta_{ab}
\\
&=
2^{n-1}h_r'(0)|\xi'|_{g^{\partial M}}^2
=2^{n-1}h_r'(0).
\end{align*}
\end{proof}

For later use in the odd splitting, the preceding identities
also give, on $|\xi'|_{g^{\partial M}}=1$,
\begin{align}
&\trE\Biggl[
\Biggl\{
\sum_{a<n}\xi_a
\bigl(
\eps(dx^a)\iotaop(\partial_n)
-\eps(dx_n)\iotaop(\partial_a)
\bigr)
\notag\\
&\hspace{11mm}
+\ii\left(
\frac{n-1}{2}\Id_E
-\sum_{a<n}\eps(dx^a)\iotaop(\partial_a)
\right)
\Biggr\}
(c(\xi')+\ii c(dx_n))c(\xi)
\Biggr]
\notag\\
&
=2^{n-1}(\xi_n-\ii).
\label{eq:odd-contraction}
\end{align}
Indeed, $c(\xi')^2=c(dx_n)^2=-\Id_E$ and
$c(\xi')c(dx_n)+c(dx_n)c(\xi')=0$ imply
\[
(c(\xi')+\ii c(dx_n))c(\xi)
=-(1+\ii\xi_n)\Id_E
 +(\xi_n-\ii)c(\xi')c(dx_n).
\]
The expression in braces in \eqref{eq:odd-contraction} has
zero trace. Its tangential part, multiplied by
$c(\xi')c(dx_n)$, has trace $2^{n-1}$ by the preceding
calculation, whereas its normal part has trace zero.
Substituting these identities proves
\eqref{eq:odd-contraction}.

\subsection{The normal contour integrals}

After taking the exterior traces, the remaining normal
integrals are scalar rational integrals. We evaluate them by
closing the contour in the upper half-plane. The integrands
below have no real poles and decay sufficiently rapidly for
the semicircle integrals to vanish.

\begin{lemma}
\label{lem:contours}
For integers $p\ge1$ and $m\ge2$,
\begin{align}
J_p
&:=
\int_{\mathbb R}
\frac1{2\ii(\xi_n-\ii)}
\frac{d^2}{d\xi_n^2}(1+\xi_n^2)^{-p}\,\dd\xi_n
=
-\frac{2\pi(2p+1)!}
{2^{2p+2}(p-1)!(p+2)!},
\label{eq:Jp}
\\
L_m
&:=
\int_{\mathbb R}
\frac{(1+\xi_n^2)^{-m}}{(\xi_n-\ii)^2}\,\dd\xi_n
=
-\frac{\pi(2m)!}
{2^{2m}(m-1)!(m+1)!},
\label{eq:Lm}
\\
M_m
&:=
\int_{\mathbb R}
\frac{(1+\xi_n^2)^{-m}}{\xi_n-\ii}\,\dd\xi_n
=
\frac{2\pi\ii(2m-1)!}
{2^{2m}(m-1)!m!}.
\label{eq:Mm}
\end{align}
\end{lemma}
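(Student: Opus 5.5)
All three integrals will be evaluated by the residue theorem. In each case the rational integrand has poles only at $\xi_n=\pm\ii$, so none lie on the real axis. Closing the contour in the upper half-plane therefore leaves only the pole at $\xi_n=\ii$. The key algebraic step is to write $(1+\xi_n^2)^{-q}=(\xi_n-\ii)^{-q}(\xi_n+\ii)^{-q}$. Each integrand then becomes $(\xi_n-\ii)^{-N}(\xi_n+\ii)^{-q}$ for suitable $N,q$, whose residue at $\ii$ is
\[
\frac{1}{(N-1)!}\frac{d^{N-1}}{d\xi_n^{N-1}}(\xi_n+\ii)^{-q}\Big|_{\xi_n=\ii}
=\frac{(-1)^{N-1}}{(N-1)!}\frac{(q+N-2)!}{(q-1)!}(2\ii)^{-q-N+1}.
\]
The integrand always decays at least like $|\xi_n|^{-3}$, so the semicircle contributions vanish; this holds for $p\ge1$ and even for $m\ge1$. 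The assumption $m\ge2$ is only the range in which the lemma is used.

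For $L_m$ we have $N=m+2$ and $q=m$. The residue is $(-1)^{m+1}\frac{(2m)!}{(m-1)!(m+1)!}(2\ii)^{-2m-1}$. Using $(2\ii)^{-2m-1}=(-1)^m/(2^{2m+1}\ii)$ and multiplying by $2\pi\ii$ gives $-\pi(2m)!/(2^{2m}(m-1)!(m+1)!)$. For $M_m$ we have $N=q=m$. The residue is $(-1)^{m-1}\frac{(2m-2)!}{(m-1)!(m-1)!}(2\ii)^{-2m+1}$, which should yield the stated value $2\pi\ii(2m-1)!/(2^{2m}(m-1)!m!)$. I would recheck this against the stated formula by tracking the factorial indices carefully, since an off-by-one in $N$ versus the power of $(\xi_n+\ii)$ is the likeliest slip. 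For $M_m$ the integrand is $(\xi_n-\ii)^{-m-1}(\xi_n+\ii)^{-m}$, so in fact $N=m+1$ and $q=m$. The residue is then $(-1)^m\frac{(2m-1)!}{(m-1)!m!}(2\ii)^{-2m}$, and since $(2\ii)^{-2m}=(-1)^m2^{-2m}$, multiplying by $2\pi\ii$ gives exactly \eqref{eq:Mm}. The same correction applies to $L_m$, where the exponent is $N=m+2$ as used above.

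For $J_p$ I would first integrate by parts twice. The boundary terms vanish because $(1+\xi_n^2)^{-p}$ and its derivative decay while $(\xi_n-\ii)^{-1}$ is bounded on $\mathbb R$. Since $\frac{d^2}{d\xi_n^2}(\xi_n-\ii)^{-1}=2(\xi_n-\ii)^{-3}$, this gives
\[
J_p=\frac1{\ii}\int_{\mathbb R}(\xi_n-\ii)^{-p-3}(\xi_n+\ii)^{-p}\dd\xi_n .
\]
Here $N=p+3$ and $q=p$, so the residue is $(-1)^{p}\frac{(2p+1)!}{(p-1)!(p+2)!}(2\ii)^{-2p-2}$ with $(2\ii)^{-2p-2}=(-1)^{p+1}2^{-2p-2}$. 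The overall sign is $-1$, and multiplying by $2\pi\ii/\ii$ gives \eqref{eq:Jp}.

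There is no conceptual obstacle here. The main risk is bookkeeping: the signs coming from powers of $2\ii$ and the factorial ranges in the repeated derivative. I would verify each formula for the smallest case as a check; for example $M_1=\pi$ and $L_1=-\pi/4$ can be confirmed by direct partial fractions.
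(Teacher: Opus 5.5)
Your argument is correct and coincides with the paper's proof. The paper uses the same residue formula for $(\xi_n-\ii)^{-a}(\xi_n+\ii)^{-b}$, with $(a,b)=(m+2,m)$ for $L_m$ and $(m+1,m)$ for $M_m$, and with $(p+3,p)$ for $J_p$ after the same double integration by parts. In a write-up, delete the abandoned ``$N=q=m$'' line for $M_m$. Also correct your sanity check: writing $\frac{1}{\xi_n-\ii}=\frac{\xi_n+\ii}{1+\xi_n^2}$ gives $M_1=\ii\int_{\mathbb R}(1+\xi_n^2)^{-2}\dd\xi_n=\pi\ii/2$, as the formula predicts, not $\pi$; your value $L_1=-\pi/4$ is right.
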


\begin{proof}
For positive integers $a,b$, the rational function
$(\xi_n-\ii)^{-a}(\xi_n+\ii)^{-b}$ has a pole of order $a$
at $\xi_n=\ii$. Its residue is
\begin{align*}
\operatorname*{Res}_{\xi_n=\ii}
\frac1{(\xi_n-\ii)^a(\xi_n+\ii)^b}
&=
\frac1{(a-1)!}
\left[
\frac{d^{a-1}}{d\xi_n^{a-1}}
(\xi_n+\ii)^{-b}
\right]_{\xi_n=\ii}
\\
&=
\frac{(-1)^{a-1}(a+b-2)!}
{(a-1)!(b-1)!(2\ii)^{a+b-1}}.
\end{align*}
There are no other poles in the upper half-plane. Since
$a+b\ge2$, the semicircle integral tends to zero, and the
real integral equals $2\pi\ii$ times this residue.

For $L_m$, the factorization
$1+\xi_n^2=(\xi_n-\ii)(\xi_n+\ii)$ gives
$(a,b)=(m+2,m)$. Hence
\[
L_m
=
2\pi\ii\,
\frac{(-1)^{m+1}(2m)!}
{(m+1)!(m-1)!(2\ii)^{2m+1}}
=
-\frac{\pi(2m)!}
{2^{2m}(m-1)!(m+1)!}.
\]
For $M_m$, take $(a,b)=(m+1,m)$:
\[
M_m
=
2\pi\ii\,
\frac{(-1)^m(2m-1)!}
{m!(m-1)!(2\ii)^{2m}}
=
\frac{2\pi\ii(2m-1)!}
{2^{2m}m!(m-1)!}.
\]

For $J_p$, integration by parts twice gives
\begin{align*}
J_p
&=
-\frac1{2\ii}\int_{\mathbb R}
\frac{d}{d\xi_n}(\xi_n-\ii)^{-1}
\frac{d}{d\xi_n}(1+\xi_n^2)^{-p}\,\dd\xi_n
\\
&=
\frac1{2\ii}\int_{\mathbb R}
\frac{d^2}{d\xi_n^2}(\xi_n-\ii)^{-1}
(1+\xi_n^2)^{-p}\,\dd\xi_n
\\
&=
\frac1{\ii}\int_{\mathbb R}
\frac{\dd\xi_n}
{(\xi_n-\ii)^{p+3}(\xi_n+\ii)^p}.
\end{align*}
The boundary terms vanish because the rational functions
and their derivatives tend to zero at infinity. Applying
the residue formula with $(a,b)=(p+3,p)$ yields
\[
J_p
=
2\pi\,
\frac{(-1)^{p+2}(2p+1)!}
{(p+2)!(p-1)!(2\ii)^{2p+2}}
=
-\frac{2\pi(2p+1)!}
{2^{2p+2}(p-1)!(p+2)!}.
\]
This proves all three identities.
\end{proof}

\section{Boundary contributions and the bimetric KKW formulas}
\label{sec:boundary-calculation}

Throughout this section, $n=2m\ge4$ and the metrics satisfy
\eqref{eq:collar}. We apply \eqref{eq:FGLS} to the two
factorizations, using the five cases described in
Section~\ref{sec:boundary-framework}. The calculation consists
of composing the symbols, taking the normal Hardy projection,
evaluating the exterior traces, and performing the normal
and tangential sphere integrations.

Fix $x_0\in\partial M$ and choose $g^{\partial M}$-normal coordinates
$x'$ centered there. We use the common coordinate exterior
frame of Section~\ref{sec:boundary-framework}. Unless otherwise
stated, symbol identities below are evaluated at $(x_0,0)$,
after the indicated base derivatives have been taken.
Restriction to $|\xi'|_{g^{\partial M}}=1$ is made only after any
required tangential covariable derivatives.

\subsection{The even exponent splitting}
\label{sec:even-boundary}

Let $P_2=D_{g_1}^2D_{g_2}^{-2}$ and
$Q_2=D_{g_2}^{-n+2}$. In the collar,
\[
|\xi|_{g_r}^2
=h_r(x_n)|\xi'|_{g^{\partial M}}^2+\xi_n^2,
\qquad
|\xi'|_{g^{\partial M}}^2
=(g^{\partial M})^{ab}(x')\xi_a\xi_b,
\]
where $((g^{\partial M})^{ab})$ is the inverse matrix of
$((g^{\partial M})_{ab})$. At $(x_0,0)$ the two quadratic symbols
coincide:
$|\xi|_{g_1}^2=|\xi|_{g_2}^2=|\xi'|_{g^{\partial M}}^2+\xi_n^2$. Their first derivatives
at this point satisfy
\[
\partial_{x_a}|\xi|_{g_r}^2=0\quad(a<n),
\qquad
\partial_{x_n}|\xi|_{g_r}^2
=h_r'(0)|\xi'|_{g^{\partial M}}^2,
\qquad
\partial_{\xi_n}|\xi|_{g_r}^2=2\xi_n.
\]

\paragraph{The symbols of the two factors.}

Write
$q_{-2}=\sigma_{-2}(D_{g_2}^{-2})$ and
$q_{-3}=\sigma_{-3}(D_{g_2}^{-2})$
for the first two homogeneous symbol components.
Using $D_{x_j}=-\ii\partial_{x_j}$, the terms of degrees
zero and minus one in the parametrix identity
$\sigma(D_{g_2}^2)\#\sigma(D_{g_2}^{-2})=\Id_E$ give
\begin{align*}
|\xi|_{g_2}^2q_{-2}&=\Id_E,
\\
|\xi|_{g_2}^2q_{-3}
+\ii\sum_iG_2^i\xi_iq_{-2}
+\sum_j
 \partial_{\xi_j}|\xi|_{g_2}^2D_{x_j}q_{-2}
&=0.
\end{align*}
where $\#$ denotes the symbol product in
\eqref{eq:symbol-product}.
Before evaluation at the boundary,
$q_{-2}=|\xi|_{g_2}^{-2}\Id_E$. Hence
\[
D_{x_n}q_{-2}
=-\ii\partial_{x_n}(|\xi|_{g_2}^{-2})\Id_E
=\ii h_2'(0)|\xi'|_{g^{\partial M}}^2|\xi|_{g_2}^{-4}\Id_E.
\]
Its tangential base derivatives vanish at the chosen point.
It follows that
\begin{align}
q_{-3}
&=
-|\xi|_{g_2}^{-2}
\left[
\ii\sum_{i=1}^n G_2^i\xi_i\,|\xi|_{g_2}^{-2}
+
2\ii h_2'(0)\xi_n
|\xi'|_{g^{\partial M}}^2
|\xi|_{g_2}^{-4}\Id_E
\right]
\notag\\
&=
-\ii\sum_{i=1}^n G_2^i\xi_i\,|\xi|_{g_2}^{-4}
-
2\ii h_2'(0)\xi_n
|\xi'|_{g^{\partial M}}^2
|\xi|_{g_2}^{-6}\Id_E.
\label{eq:inverse-square-subleading}
\end{align}
By \eqref{eq:warped-G}, the matrix coefficient is
\begin{align*}
\sum_iG_r^i\xi_i
=h_r'(0)\Biggl[
&\sum_{a<n}\xi_a
 \bigl(
 \eps(dx^a)\iotaop(\partial_n)
 -\eps(dx_n)\iotaop(\partial_a)
 \bigr)
+\xi_n
 \left(
 \frac{n-1}{2}\Id_E
 -\sum_{a<n}\eps(dx^a)\iotaop(\partial_a)
 \right)
\Biggr].
\end{align*}

The leading symbol of $P_2$ is
$\sigma_0(P_2)=|\xi|_{g_1}^2|\xi|_{g_2}^{-2}\Id_E$.
At the boundary point, the next symbol has three
contributions:
\begin{align*}
\sigma_{-1}(P_2)
&=
|\xi|_{g_1}^2q_{-3}
+\ii\sum_iG_1^i\xi_i|\xi|_{g_2}^{-2}
+2\xi_nD_{x_n}q_{-2}
\\
&=
-\ii\sum_iG_2^i\xi_i|\xi|_{g_2}^{-2}
-2\ii h_2'(0)\xi_n|\xi'|_{g^{\partial M}}^2|\xi|_{g_2}^{-4}\Id_E
\\
&\quad+\ii\sum_iG_1^i\xi_i|\xi|_{g_2}^{-2}
+2\ii h_2'(0)\xi_n|\xi'|_{g^{\partial M}}^2|\xi|_{g_2}^{-4}\Id_E
\\
&=
\ii\sum_i(G_1^i-G_2^i)\xi_i|\xi|_{g_2}^{-2}.
\end{align*}
The scalar derivative terms cancel. Substituting
\eqref{eq:warped-G}, we obtain
\begin{align*}
\sigma_{-1}(P_2)
=
\frac{\ii(h_1'(0)-h_2'(0))}{|\xi|_{g_2}^2}
\Biggl[
&\sum_{a<n}\xi_a
 \bigl(
 \eps(dx^a)\iotaop(\partial_n)
 -\eps(dx_n)\iotaop(\partial_a)
 \bigr)
\\
&+\xi_n
 \left(
 \frac{n-1}{2}\Id_E
 -\sum_{a<n}\eps(dx^a)\iotaop(\partial_a)
 \right)
\Biggr].
\end{align*}

Although $\sigma_0(P_2)=\Id_E$ at the boundary, its normal
derivative must be computed from the collar expression:
\begin{align*}
\partial_{x_n}\sigma_0(P_2)
&=
\left[
\frac{\partial_{x_n}|\xi|_{g_1}^2}{|\xi|_{g_2}^2}
-\frac{|\xi|_{g_1}^2\partial_{x_n}|\xi|_{g_2}^2}
      {|\xi|_{g_2}^4}
\right]\Id_E
\\
&=
\frac{(h_1'(0)-h_2'(0))|\xi'|_{g^{\partial M}}^2}
     {|\xi'|_{g^{\partial M}}^2+\xi_n^2}\Id_E.
\end{align*}
On $|\xi'|_{g^{\partial M}}=1$, the projection formulas
\eqref{eq:Hardy} therefore give
\begin{align}
\pi_{\xi_n}^+\sigma_0(P_2)&=0,
\notag\\
\partial_{x_n}\pi_{\xi_n}^+\sigma_0(P_2)
&=
\frac{h_1'(0)-h_2'(0)}
     {2\ii(\xi_n-\ii)}\Id_E,
\notag\\
\pi_{\xi_n}^+\sigma_{-1}(P_2)
&=
\frac{h_1'(0)-h_2'(0)}{2(\xi_n-\ii)}
\Biggl[
\sum_{a<n}\xi_a
 \bigl(
 \eps(dx^a)\iotaop(\partial_n)
 -\eps(dx_n)\iotaop(\partial_a)
 \bigr)
\notag\\
&\hspace{24mm}
+\ii\left(
\frac{n-1}{2}\Id_E
-\sum_{a<n}\eps(dx^a)\iotaop(\partial_a)
\right)
\Biggr].
\label{eq:even-symbols}
\end{align}
The Hardy projection acts in $\xi_n$ and commutes with
the base derivatives used here. In particular, the second
line is obtained by differentiating the symbol before
restricting it to the boundary.

Since $Q_2=(D_{g_2}^2)^{-(m-1)}$ modulo smoothing operators,
its leading symbol is
$\sigma_{-n+2}(Q_2)=|\xi|_{g_2}^{-n+2}\Id_E$.
On the unit tangential sphere, the relevant normal
covariable derivatives are
\begin{align*}
\partial_{\xi_n}(1+\xi_n^2)^{-m+1}
&=
-2(m-1)\xi_n(1+\xi_n^2)^{-m},
\\
\partial_{\xi_n}^2(1+\xi_n^2)^{-m+1}
&=
-2(m-1)(1+\xi_n^2)^{-m}
+4m(m-1)\xi_n^2(1+\xi_n^2)^{-m-1}.
\end{align*}

\paragraph{Case (a)(I).}

Here $r=0$, $\ell=-n+2$, $j=k=0$, and $|\alpha|=1$.
For each such multi-index, $\alpha!=1$, so the coefficient
in \eqref{eq:FGLS} is $(-\ii)^2=-1$. Thus
\begin{align*}
\Phi_1^{(2)}
=
-\int_{|\xi'|_{g^{\partial M}}=1}\int_{\mathbb R}
\sum_{a<n}
\trE\left[
\partial_{\xi_a}\pi_{\xi_n}^+\sigma_0(P_2)\,
\partial_{x_a}\partial_{\xi_n}
\sigma_{-n+2}(Q_2)
\right]
\,\dd\xi_n\,\dd S_{g^{\partial M}}(\xi').
\end{align*}
Before evaluating at $(x_0,0)$,
\[
\partial_{x_a}\sigma_{-n+2}(Q_2)
=
-(m-1)|\xi|_{g_2}^{-2m}
 \partial_{x_a}|\xi|_{g_2}^2\Id_E.
\]
At the chosen point,
$\partial_{x_a}|\xi|_{g_2}^2
=h_2(\partial_{x_a}(g^{\partial M})^{bc})\xi_b\xi_c=0$.
This holds for every covector, and hence its
$\xi_n$-derivative also vanishes. Substitution into the
integral gives $\Phi_1^{(2)}=0$.

\paragraph{Case (a)(II).}

Now $r=0$, $\ell=-n+2$, $j=1$, and $k=|\alpha|=0$.
The coefficient in \eqref{eq:FGLS} is
$(-\ii)^2/2!=-1/2$. By \eqref{eq:even-symbols},
\begin{align}
\Phi_2^{(2)}
&=
-\frac12
\int_{|\xi'|_{g^{\partial M}}=1}\int_{\mathbb R}
\trE\left[
\partial_{x_n}\pi_{\xi_n}^+\sigma_0(P_2)\,
\partial_{\xi_n}^2\sigma_{-n+2}(Q_2)
\right]
\,\dd\xi_n\,\dd S_{g^{\partial M}}(\xi')
\notag\\
&=
-\frac12
\int_{|\xi'|_{g^{\partial M}}=1}\int_{\mathbb R}
\frac{h_1'(0)-h_2'(0)}{2\ii(\xi_n-\ii)}
\frac{d^2}{d\xi_n^2}(1+\xi_n^2)^{-m+1}
\trE(\Id_E)
\,\dd\xi_n\,\dd S_{g^{\partial M}}(\xi')
\notag\\
&=
-2^{n-1}(h_1'(0)-h_2'(0))\nu_{n-2}J_{m-1}
\notag\\
&=
\pi\nu_{n-2}(h_1'(0)-h_2'(0))
\frac{(2m-1)!}{(m-2)!(m+1)!}.
\label{eq:even-b}
\end{align}
Here $\trE(\Id_E)=2^n$, and the traced integrand is independent
of the direction of $\xi'$. The last equality follows from
\eqref{eq:Jp} with $p=m-1$ and $n=2m$.

\paragraph{Case (a)(III).}

Take $r=0$, $\ell=-n+2$, $k=1$, and $j=|\alpha|=0$.
The coefficient is again $-1/2$, so
\[
\Phi_3^{(2)}
=
-\frac12
\int_{|\xi'|_{g^{\partial M}}=1}\int_{\mathbb R}
\trE\left[
\partial_{\xi_n}\pi_{\xi_n}^+\sigma_0(P_2)\,
\partial_{\xi_n}\partial_{x_n}
\sigma_{-n+2}(Q_2)
\right]
\,\dd\xi_n\,\dd S_{g^{\partial M}}(\xi').
\]
The right factor is determined by
\begin{align*}
\partial_{x_n}\sigma_{-n+2}(Q_2)
&=
-(m-1)h_2'(0)(1+\xi_n^2)^{-m}\Id_E,
\\
\partial_{\xi_n}\partial_{x_n}\sigma_{-n+2}(Q_2)
&=
2m(m-1)h_2'(0)\xi_n
(1+\xi_n^2)^{-m-1}\Id_E.
\end{align*}
The left factor is zero by \eqref{eq:even-symbols}.
Therefore $\Phi_3^{(2)}=0$.

\paragraph{Case (b).}

Here $r=-1$, $\ell=-n+2$, and $j=k=|\alpha|=0$.
The coefficient in \eqref{eq:FGLS} is $-\ii$. Thus
\begin{align*}
\Phi_4^{(2)}
&=
-\ii
\int_{|\xi'|_{g^{\partial M}}=1}\int_{\mathbb R}
\trE\left[
\pi_{\xi_n}^+\sigma_{-1}(P_2)\,
\partial_{\xi_n}\sigma_{-n+2}(Q_2)
\right]
\,\dd\xi_n\,\dd S_{g^{\partial M}}(\xi')
\\
&=
2\ii(m-1)
\int_{|\xi'|_{g^{\partial M}}=1}\int_{\mathbb R}
\xi_n(1+\xi_n^2)^{-m}
\trE\bigl[\pi_{\xi_n}^+\sigma_{-1}(P_2)\bigr]
\,\dd\xi_n\,\dd S_{g^{\partial M}}(\xi').
\end{align*}
By Lemma~\ref{lem:boundary-traces},
\begin{align*}
\trE\bigl[\pi_{\xi_n}^+\sigma_{-1}(P_2)\bigr]
&=
\pi_{\xi_n}^+
\left[
\frac{\ii}{|\xi|_{g_2}^2}
\sum_i\xi_i\bigl(\trE G_1^i-\trE G_2^i\bigr)
\right]
=0.
\end{align*}
Hence $\Phi_4^{(2)}=0$.

\paragraph{Case (c).}

The remaining choice is $r=0$, $\ell=-n+1$, and
$j=k=|\alpha|=0$, with coefficient $-\ii$. Consequently,
\begin{align*}
\Phi_5^{(2)}
&=
-\ii
\int_{|\xi'|_{g^{\partial M}}=1}\int_{\mathbb R}
\trE\left[
\pi_{\xi_n}^+\sigma_0(P_2)\,
\partial_{\xi_n}\sigma_{-n+1}(Q_2)
\right]
\,\dd\xi_n\,\dd S_{g^{\partial M}}(\xi')
\\
&=0
\end{align*}
by $\pi_{\xi_n}^+\sigma_0(P_2)=0$.
The subleading symbol of $Q_2$ therefore does not enter
the calculation.

\begin{proposition}[Even boundary contribution]
\label{prop:even-boundary}
Under \eqref{eq:collar},
\[
\int_{\partial M}\Phi_2\,\dd\vol_{g^{\partial M}}
=
\pi\nu_{n-2}
\frac{(2m-1)!}{(m-2)!(m+1)!}
\int_{\partial M}
(h_1'(0)-h_2'(0))\,\dd\vol_{g^{\partial M}}.
\]
\end{proposition}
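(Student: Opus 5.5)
The plan is to assemble the five case computations already carried out for the even splitting. By the decomposition following \eqref{eq:defect}, the local boundary coefficient satisfies $\Phi_2=\sum_{j=1}^5\Phi_j^{(2)}$ at each $x_0\in\partial M$. Here $\Phi_j^{(2)}$ is the contribution of case $j$ of the order table, with both the $\xi_n$ and tangential sphere integrations already included. So I would first check that the five cases exhaust \eqref{eq:defect} for $s=2$. This holds because the order identity forces exactly one of the five nonnegative defects to equal one.

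Next I would recall the case values computed above. Case (a)(I) gives $\Phi_1^{(2)}=0$, since $\partial_{x_a}|\xi|_{g_2}^2=0$ at the centre of $g^{\partial M}$-normal coordinates. Case (a)(III) gives $\Phi_3^{(2)}=0$ and case (c) gives $\Phi_5^{(2)}=0$. Both vanish because $\pi_{\xi_n}^+\sigma_0(P_2)=0$ on $|\xi'|=1$, so its $\xi_n$-derivative also vanishes. Case (b) gives $\Phi_4^{(2)}=0$: the factor from $Q_2$ is scalar, so only $\trE$ of $\pi^+_{\xi_n}\sigma_{-1}(P_2)$ enters, and this trace is zero by $\trE G_r^i=0$ in Lemma~\ref{lem:boundary-traces}.

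The only surviving term is case (a)(II), given by \eqref{eq:even-b}. There the integrand is scalar, $\trE(\Id_E)=2^n$, and the integrand is independent of the direction of $\xi'$, so the tangential integral contributes $\nu_{n-2}$. The $\xi_n$-integral is $J_{m-1}$ from Lemma~\ref{lem:contours}, which is valid since $p=m-1\ge1$ for $n\ge4$. This gives the pointwise identity
\[
\Phi_2(x_0)=\pi\nu_{n-2}\frac{(2m-1)!}{(m-2)!(m+1)!}\bigl(h_1'(0)-h_2'(0)\bigr).
\]

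The step I would check most carefully is the arithmetic in \eqref{eq:even-b}. One has $-\tfrac12\cdot\tfrac{1}{2\ii}\cdot 2^n$ times the integral $\int(\xi_n-\ii)^{-1}\partial_{\xi_n}^2(1+\xi_n^2)^{-m+1}\,\dd\xi_n$, and this integral equals $2\ii J_{m-1}$. The product is therefore $-2^{n-1}J_{m-1}$. Substituting \eqref{eq:Jp} with $p=m-1$ gives
\[
-2^{2m-1}\cdot\Bigl(-\frac{2\pi(2m-1)!}{2^{2m}(m-2)!(m+1)!}\Bigr)=\frac{\pi(2m-1)!}{(m-2)!(m+1)!},
\]
as required.

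Finally, $x_0\in\partial M$ was arbitrary. The quantities $h_r'(0)$ are constant on each boundary component, or locally constant if the warping functions are chosen per component. So the pointwise identity holds on all of $\partial M$, and integrating against $\dd\vol_{g^{\partial M}}$ gives the proposition.

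The main subtle point is independent of the chosen coordinates and frame. The sum defining $\Phi_2$ in \eqref{eq:FGLS} is the invariant boundary density of \cite{FGLS1996}, so evaluating it in $g^{\partial M}$-normal coordinates at each point is legitimate.
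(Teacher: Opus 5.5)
Your proposal is correct and follows the paper's proof. The paper likewise writes $\Phi_2=\sum_{j=1}^5\Phi_j^{(2)}$, observes that only the case (a)(II) term $\Phi_2^{(2)}$ survives, and substitutes \eqref{eq:even-b} before integrating over $\partial M$; your reduction of that term to $-2^{n-1}\bigl(h_1'(0)-h_2'(0)\bigr)\nu_{n-2}J_{m-1}$, evaluated with $p=m-1$ in \eqref{eq:Jp}, gives the same constant as the paper.
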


\begin{proof}
The five cases give
\[
\Phi_2
=
\Phi_1^{(2)}+\Phi_2^{(2)}+\Phi_3^{(2)}
+\Phi_4^{(2)}+\Phi_5^{(2)}
=
\Phi_2^{(2)}.
\]
Substituting \eqref{eq:even-b} and integrating over
$\partial M$ proves the assertion.
\end{proof}

\subsection{The odd exponent splitting}
\label{sec:odd-boundary}

Let $P_1=D_{g_1}^2D_{g_2}^{-1}$ and
$Q_1=D_{g_2}^{-n+1}$. The interior parametrix identity
$P_1=P_2D_{g_2}$ modulo smoothing operators allows us to
reuse the symbols of $P_2$. The symbol compositions below
are performed before truncating the operators.

\paragraph{The symbols of the two factors.}

The leading symbol is
\[
\sigma_1(P_1)
=
\frac{|\xi|_{g_1}^2}{|\xi|_{g_2}^2}\,
\ii c_{g_2}(\xi).
\]
Differentiating the quotient and the Clifford factor
separately, then evaluating at the boundary, gives
\begin{align*}
\partial_{x_n}\sigma_1(P_1)
&=
\frac{\ii(h_1'(0)-h_2'(0))|\xi'|_{g^{\partial M}}^2}
     {|\xi'|_{g^{\partial M}}^2+\xi_n^2}c(\xi)
-\ii h_2'(0)
 \sum_{a<n}\xi_a\iotaop(\partial_a).
\end{align*}
At the boundary, $\sigma_1(P_1)=\ii c(\xi)$ is polynomial
in $\xi_n$, so its Hardy projection vanishes. The second
term in its normal derivative is also polynomial.
Thus, on $|\xi'|_{g^{\partial M}}=1$,
\begin{align}
\pi_{\xi_n}^+\sigma_1(P_1)&=0,
\notag\\
\partial_{x_n}\pi_{\xi_n}^+\sigma_1(P_1)
&=
\ii(h_1'(0)-h_2'(0))
\left[
\frac{c(\xi')}{2\ii(\xi_n-\ii)}
+\frac{c(dx_n)}{2(\xi_n-\ii)}
\right]
\notag\\
&=
\frac{h_1'(0)-h_2'(0)}{2(\xi_n-\ii)}
\bigl(c(\xi')+\ii c(dx_n)\bigr).
\label{eq:odd-normal}
\end{align}

The next symbol degree in $P_1=P_2D_{g_2}$ is
\begin{align*}
\sigma_0(P_1)
={}&
\sigma_0(P_2)\sigma_0(D_{g_2})
+\sigma_{-1}(P_2)\,\ii c_{g_2}(\xi)
+
\sum_j\partial_{\xi_j}\sigma_0(P_2)\,
D_{x_j}\bigl(\ii c_{g_2}(\xi)\bigr).
\end{align*}
At the boundary, $\sigma_0(P_2)=\Id_E$ for every nonzero
covector, and hence all its covariable derivatives vanish.
It follows that
\begin{align*}
\sigma_0(P_1)
={}&
\sigma_0(D_{g_2})
-\frac{h_1'(0)-h_2'(0)}{|\xi|_{g_2}^2}
\Biggl[
\sum_{a<n}\xi_a
 \bigl(
 \eps(dx^a)\iotaop(\partial_n)
 -\eps(dx_n)\iotaop(\partial_a)
 \bigr)
\\
&\hspace{27mm}
+\xi_n
 \left(
 \frac{n-1}{2}\Id_E
 -\sum_{a<n}\eps(dx^a)\iotaop(\partial_a)
 \right)
\Biggr]c(\xi).
\end{align*}

Restrict now to $|\xi'|_{g^{\partial M}}=1$. The first term is
independent of $\xi_n$. The second is a rational matrix
function with denominator
$(\xi_n-\ii)(\xi_n+\ii)$ and numerator of degree at most two
in $\xi_n$. Its polynomial part is removed by the Hardy
projection, while its principal part at $\xi_n=\ii$ is
obtained by evaluating the numerator there. Therefore
\begin{align}
\pi_{\xi_n}^+\sigma_0(P_1)
={}&
-\frac{h_1'(0)-h_2'(0)}{2\ii(\xi_n-\ii)}
\Biggl[
\sum_{a<n}\xi_a
 \bigl(
 \eps(dx^a)\iotaop(\partial_n)
 -\eps(dx_n)\iotaop(\partial_a)
 \bigr)
\notag\\
&\hspace{18mm}
+\ii\left(
\frac{n-1}{2}\Id_E
-\sum_{a<n}\eps(dx^a)\iotaop(\partial_a)
\right)
\Biggr]
\bigl(c(\xi')+\ii c(dx_n)\bigr).
\label{eq:odd-zero}
\end{align}
The matrix numerator in this expression is independent
of $\xi_n$.

For the right factor, the identity
$Q_1=D_{g_2}(D_{g_2}^2)^{-m}$ modulo smoothing operators
gives the leading symbol
$\ii c_{g_2}(\xi)|\xi|_{g_2}^{-2m}$.
At the boundary, on $|\xi'|_{g^{\partial M}}=1$, this becomes
\begin{equation}
\label{eq:odd-Q}
\sigma_{-n+1}(Q_1)
=
\ii c(\xi)(1+\xi_n^2)^{-m}.
\end{equation}
Its first two normal covariable derivatives are
\begin{align*}
\partial_{\xi_n}\sigma_{-n+1}(Q_1)
&=
\ii c(dx_n)(1+\xi_n^2)^{-m}
-2m\ii\xi_nc(\xi)(1+\xi_n^2)^{-m-1},
\\
\partial_{\xi_n}^2\sigma_{-n+1}(Q_1)
&=
-4m\ii\xi_nc(dx_n)(1+\xi_n^2)^{-m-1}
\\
&\quad
+\ii c(\xi)
\left[
-2m(1+\xi_n^2)^{-m-1}
+4m(m+1)\xi_n^2(1+\xi_n^2)^{-m-2}
\right].
\end{align*}

\paragraph{Case (a)(I).}

Here $r=1$, $\ell=-n+1$, $j=k=0$, and $|\alpha|=1$.
The coefficient in \eqref{eq:FGLS} is $-1$, so
\[
\Phi_1^{(1)}
=
-\int_{|\xi'|_{g^{\partial M}}=1}\int_{\mathbb R}
\sum_{a<n}
\trE\left[
\partial_{\xi_a}\pi_{\xi_n}^+\sigma_1(P_1)\,
\partial_{x_a}\partial_{\xi_n}
\sigma_{-n+1}(Q_1)
\right]
\,\dd\xi_n\,\dd S_{g^{\partial M}}(\xi').
\]
Before evaluation at the boundary, the tangential derivative
of the right leading symbol is
\begin{align*}
\partial_{x_a}
\bigl(\ii c_{g_2}(\xi)|\xi|_{g_2}^{-2m}\bigr)
={}&
\ii\partial_{x_a}c_{g_2}(\xi)|\xi|_{g_2}^{-2m}
\\
&-
m\ii c_{g_2}(\xi)|\xi|_{g_2}^{-2m-2}
\partial_{x_a}|\xi|_{g_2}^2.
\end{align*}
At $(x_0,0)$, both terms vanish. Indeed,
$\partial_{x_a}g_2^{ij}=0$ gives
$\partial_{x_a}c_{g_2}(\xi)
=-(\partial_{x_a}g_2^{ij})\xi_i\iotaop(\partial_j)=0$,
and $\partial_{x_a}|\xi|_{g_2}^2=0$.
These identities hold for every $\xi_n$, and remain zero
after differentiation in $\xi_n$. Hence $\Phi_1^{(1)}=0$.

\paragraph{Case (a)(II).}

Take $r=1$, $\ell=-n+1$, $j=1$, and $k=|\alpha|=0$.
The coefficient in \eqref{eq:FGLS} is $-1/2$. Equations
\eqref{eq:odd-normal} and \eqref{eq:odd-Q} give
\begin{align*}
\Phi_2^{(1)}
&=
-\frac12
\int_{|\xi'|_{g^{\partial M}}=1}\int_{\mathbb R}
\trE\left[
\partial_{x_n}\pi_{\xi_n}^+\sigma_1(P_1)\,
\partial_{\xi_n}^2\sigma_{-n+1}(Q_1)
\right]
\,\dd\xi_n\,\dd S_{g^{\partial M}}(\xi')
\\
&=
-\frac{\ii(h_1'(0)-h_2'(0))}{4}
\int_{|\xi'|_{g^{\partial M}}=1}\int_{\mathbb R}
\frac1{\xi_n-\ii}
\\
&\hspace{12mm}\cdot
\trE\left[
(c(\xi')+\ii c(dx_n))
\frac{d^2}{d\xi_n^2}
\bigl(c(\xi)(1+\xi_n^2)^{-m}\bigr)
\right]
\,\dd\xi_n\,\dd S_{g^{\partial M}}(\xi').
\end{align*}
The two-factor Clifford traces yield
\begin{align*}
\trE\bigl[(c(\xi')+\ii c(dx_n))c(\xi)\bigr]
&=
\trE[c(\xi')^2]
+\ii\xi_n\trE[c(dx_n)^2]
\\
&\quad
+\xi_n\trE[c(\xi')c(dx_n)]
+\ii\trE[c(dx_n)c(\xi')]
\\
&=-2^n(1+\ii\xi_n).
\end{align*}
The mixed traces vanish because $\xi'$ and $dx_n$ are
orthogonal. Since $c(\xi')+\ii c(dx_n)$ is independent
of $\xi_n$, differentiation can be taken after the trace.
We obtain
\begin{align}
\Phi_2^{(1)}
&=
\ii\,2^{n-2}(h_1'(0)-h_2'(0))
\int_{|\xi'|_{g^{\partial M}}=1}\int_{\mathbb R}
\frac{
\frac{d^2}{d\xi_n^2}
\bigl((1+\ii\xi_n)(1+\xi_n^2)^{-m}\bigr)}
{\xi_n-\ii}
\,\dd\xi_n\,\dd S_{g^{\partial M}}(\xi')
\notag\\
&=
\ii\,2^{n-1}(h_1'(0)-h_2'(0))
\int_{|\xi'|_{g^{\partial M}}=1}\int_{\mathbb R}
\frac{(1+\ii\xi_n)(1+\xi_n^2)^{-m}}
     {(\xi_n-\ii)^3}
\,\dd\xi_n\,\dd S_{g^{\partial M}}(\xi')
\notag\\
&=
-2^{n-1}(h_1'(0)-h_2'(0))\nu_{n-2}L_m
\notag\\
&=
\frac{\pi\nu_{n-2}(h_1'(0)-h_2'(0))}{2}
\frac{(2m)!}{(m-1)!(m+1)!}.
\label{eq:odd-b}
\end{align}
The second equality follows by integrating twice by parts,
using
$\frac{d^2}{d\xi_n^2}(\xi_n-\ii)^{-1}
=2(\xi_n-\ii)^{-3}$.
All boundary terms vanish because
$(1+\ii\xi_n)(1+\xi_n^2)^{-m}=O(|\xi_n|^{1-2m})$
and its derivatives also decay.
The third equality uses
$1+\ii\xi_n=\ii(\xi_n-\ii)$, and the last follows
from \eqref{eq:Lm}.

\paragraph{Case (a)(III).}

Here $r=1$, $\ell=-n+1$, $k=1$, and $j=|\alpha|=0$,
with coefficient $-1/2$. Thus
\[
\Phi_3^{(1)}
=
-\frac12
\int_{|\xi'|_{g^{\partial M}}=1}\int_{\mathbb R}
\trE\left[
\partial_{\xi_n}\pi_{\xi_n}^+\sigma_1(P_1)\,
\partial_{\xi_n}\partial_{x_n}
\sigma_{-n+1}(Q_1)
\right]
\,\dd\xi_n\,\dd S_{g^{\partial M}}(\xi').
\]
Differentiating the right symbol before restricting to
the boundary and the unit tangential sphere gives
\begin{align*}
\partial_{x_n}\sigma_{-n+1}(Q_1)
={}&
-\ii h_2'(0)\sum_{a<n}\xi_a\iotaop(\partial_a)
(1+\xi_n^2)^{-m}
-
m\ii h_2'(0)c(\xi)(1+\xi_n^2)^{-m-1}.
\end{align*}
The left factor, however, vanishes:
$\partial_{\xi_n}\pi_{\xi_n}^+\sigma_1(P_1)=0$
by \eqref{eq:odd-normal}. Therefore $\Phi_3^{(1)}=0$.

\paragraph{Case (b).}

Now $r=0$, $\ell=-n+1$, and $j=k=|\alpha|=0$.
The coefficient in \eqref{eq:FGLS} is $-\ii$, so
\begin{align*}
\Phi_4^{(1)}
&=
-\ii
\int_{|\xi'|_{g^{\partial M}}=1}\int_{\mathbb R}
\trE\left[
\pi_{\xi_n}^+\sigma_0(P_1)\,
\partial_{\xi_n}\sigma_{-n+1}(Q_1)
\right]
\,\dd\xi_n\,\dd S_{g^{\partial M}}(\xi')
\\
&=
\ii
\int_{|\xi'|_{g^{\partial M}}=1}\int_{\mathbb R}
\trE\left[
\partial_{\xi_n}
 \bigl(\pi_{\xi_n}^+\sigma_0(P_1)\bigr)\,
\sigma_{-n+1}(Q_1)
\right]
\,\dd\xi_n\,\dd S_{g^{\partial M}}(\xi').
\end{align*}
The second equality is integration by parts. Its boundary
term vanishes, since the projected left symbol is
$O(|\xi_n|^{-1})$ and the right leading symbol is
$O(|\xi_n|^{1-2m})$.

The numerator in \eqref{eq:odd-zero} does not depend on
$\xi_n$. Consequently,
\[
\partial_{\xi_n}
 \bigl(\pi_{\xi_n}^+\sigma_0(P_1)\bigr)
=
-\frac1{\xi_n-\ii}\,
 \pi_{\xi_n}^+\sigma_0(P_1).
\]
Moreover, \eqref{eq:odd-zero} and
\eqref{eq:odd-contraction} give
\begin{align*}
\trE\bigl[
\pi_{\xi_n}^+\sigma_0(P_1)c(\xi)
\bigr]
&=
-\frac{h_1'(0)-h_2'(0)}{2\ii(\xi_n-\ii)}
\,2^{n-1}(\xi_n-\ii)
\\
&=
\ii\,2^{n-2}(h_1'(0)-h_2'(0)).
\end{align*}
Multiplication by the scalar factor in
\eqref{eq:odd-Q} then yields
\[
\trE\bigl[
\pi_{\xi_n}^+\sigma_0(P_1)\sigma_{-n+1}(Q_1)
\bigr]
=
-2^{n-2}(h_1'(0)-h_2'(0))(1+\xi_n^2)^{-m}.
\]
Substituting into the integrated expression for
$\Phi_4^{(1)}$, we obtain
\begin{align}
\Phi_4^{(1)}
&=
-\ii
\int_{|\xi'|_{g^{\partial M}}=1}\int_{\mathbb R}
\frac{
\trE\bigl[
\pi_{\xi_n}^+\sigma_0(P_1)\sigma_{-n+1}(Q_1)
\bigr]}
{\xi_n-\ii}
\,\dd\xi_n\,\dd S_{g^{\partial M}}(\xi')
\notag\\
&=
\ii\,2^{n-2}(h_1'(0)-h_2'(0))\nu_{n-2}
\int_{\mathbb R}
\frac{(1+\xi_n^2)^{-m}}{\xi_n-\ii}\,\dd\xi_n
\notag\\
&=
\ii\,2^{n-2}(h_1'(0)-h_2'(0))\nu_{n-2}M_m
\notag\\
&=
-\frac{\pi\nu_{n-2}(h_1'(0)-h_2'(0))}{2}
\frac{(2m-1)!}{(m-1)!m!}.
\label{eq:odd-d}
\end{align}
The last equality follows from \eqref{eq:Mm},
$n=2m$, and $\ii^2=-1$.

\paragraph{Case (c).}

Finally, $r=1$, $\ell=-n$, and $j=k=|\alpha|=0$,
with coefficient $-\ii$. We have
\begin{align*}
\Phi_5^{(1)}
&=
-\ii
\int_{|\xi'|_{g^{\partial M}}=1}\int_{\mathbb R}
\trE\left[
\pi_{\xi_n}^+\sigma_1(P_1)\,
\partial_{\xi_n}\sigma_{-n}(Q_1)
\right]
\,\dd\xi_n\,\dd S_{g^{\partial M}}(\xi')
\\
&=0
\end{align*}
by \eqref{eq:odd-normal}.
Thus the subleading right symbol does not contribute.

\begin{proposition}[Odd boundary contribution]
\label{prop:odd-boundary}
Under \eqref{eq:collar},
\[
\int_{\partial M}\Phi_1\,\dd\vol_{g^{\partial M}}
=
\frac{\pi\nu_{n-2}}2
\frac{(2m-1)!}{(m-2)!(m+1)!}
\int_{\partial M}
(h_1'(0)-h_2'(0))\,\dd\vol_{g^{\partial M}}.
\]
\end{proposition}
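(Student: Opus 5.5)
The proof will mirror that of Proposition~\ref{prop:even-boundary}. The five case computations of Section~\ref{sec:odd-boundary} give
\[
\Phi_1=\Phi_1^{(1)}+\Phi_2^{(1)}+\Phi_3^{(1)}+\Phi_4^{(1)}+\Phi_5^{(1)}
=\Phi_2^{(1)}+\Phi_4^{(1)}.
\]
Here $\Phi_1^{(1)}$ vanishes because the tangential base derivatives of the right symbol are zero at the center of $g^{\partial M}$-normal coordinates. The terms $\Phi_3^{(1)}$ and $\Phi_5^{(1)}$ vanish because the left factor is built from $\pi^+_{\xi_n}\sigma_1(P_1)=0$ or from $\partial_{\xi_n}$ applied to the $\xi_n$-independent numerator in \eqref{eq:odd-normal}. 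It therefore remains to add the closed forms \eqref{eq:odd-b} and \eqref{eq:odd-d} and integrate over $\partial M$.

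The one step needing care is the algebraic simplification of the factorial expressions. I would put both terms over the common denominator $(m-2)!(m+1)!$. For the first term,
\[
\frac{(2m)!}{(m-1)!(m+1)!}=\frac{2m\,(2m-1)!}{(m-1)(m-2)!(m+1)!}.
\]
For the second term,
\[
\frac{(2m-1)!}{(m-1)!\,m!}=\frac{(m+1)(2m-1)!}{(m-1)(m-2)!(m+1)!}.
\]
These are legitimate because $m\ge2$, since $n\ge4$. Subtracting,
\[
\frac{\pi\nu_{n-2}(h_1'(0)-h_2'(0))}{2}\cdot\frac{(2m-1)!}{(m-2)!(m+1)!}\cdot\frac{2m-(m+1)}{m-1},
\]
and the last factor equals $1$. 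This is exactly the pointwise coefficient claimed, namely half of the even one in \eqref{eq:even-b}.

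Finally, $\Phi_1(x')$ was computed at an arbitrary boundary point $x_0$ using $g^{\partial M}$-normal coordinates. The resulting expression is coordinate-free, since it involves only $h_1'(0)-h_2'(0)$, which is constant along each boundary component. Integrating against $\dd\vol_{g^{\partial M}}$ then gives the stated formula.

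I expect no real obstacle here, since the substantive work (the Hardy projections, the trace identity \eqref{eq:odd-contraction}, and the contour integrals of Lemma~\ref{lem:contours}) is already done. The main points to double-check are the signs in \eqref{eq:odd-b} and \eqref{eq:odd-d}, in particular the factor $\ii^2=-1$ in $\Phi_4^{(1)}$, because the cancellation to the coefficient $\tfrac12$ depends on the relative sign of the two contributions.
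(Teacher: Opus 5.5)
Your proposal is correct and is essentially the paper's proof: only $\Phi_2^{(1)}$ and $\Phi_4^{(1)}$ survive. Your common-denominator simplification is equivalent to the paper's step of factoring out $(2m-1)!/((m-1)!(m+1)!)$ and then using $(m-1)/(m-1)!=1/(m-2)!$.

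One minor slip in the justification concerns $\Phi_3^{(1)}$ and $\Phi_5^{(1)}$. Both vanish simply because $\pi^+_{\xi_n}\sigma_1(P_1)=0$ identically in $\xi_n$, since case (a)(III) puts no $x_n$-derivative on the left factor. The $\xi_n$-independent numerator you cite actually belongs to \eqref{eq:odd-zero} and is used for $\Phi_4^{(1)}$. It would not by itself make a $\xi_n$-derivative of an expression $N/(\xi_n-\ii)$ vanish.
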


\begin{proof}
Only $\Phi_2^{(1)}$ and $\Phi_4^{(1)}$ are nonzero.
Using \eqref{eq:odd-b} and \eqref{eq:odd-d},
\begin{align*}
\Phi_1
&=
\Phi_2^{(1)}+\Phi_4^{(1)}
\\
&=
\frac{\pi\nu_{n-2}(h_1'(0)-h_2'(0))}{2}
\left[
\frac{(2m)!}{(m-1)!(m+1)!}
-\frac{(2m-1)!}{(m-1)!m!}
\right]
\\
&=
\frac{\pi\nu_{n-2}(h_1'(0)-h_2'(0))}{2}
\frac{(2m-1)!}{(m-1)!(m+1)!}
\bigl(2m-(m+1)\bigr)
\\
&=
\frac{\pi\nu_{n-2}(h_1'(0)-h_2'(0))}{2}
\frac{(2m-1)!}{(m-2)!(m+1)!}.
\end{align*}
Here $(m-1)/(m-1)!=1/(m-2)!$ since $m\ge2$.
Integrating over $\partial M$ proves the formula.
\end{proof}

\subsection{The bimetric KKW formulas}
\label{sec:boundary-main}

For $n=2m\ge4$, set
$\kappa_n
=\pi\nu_{n-2}(n-2)!/((m-2)!(m+1)!)$.
Combining the boundary computations with
\eqref{eq:boundary-interior} gives the following result.

\begin{theorem}
\label{thm:boundary-KKW}
Let $M^n$ be a compact oriented manifold with smooth boundary,
where $n=2m\ge4$, and let $g_1,g_2$ be smooth Riemannian
metrics satisfying \eqref{eq:collar}.
With the inward normal convention
\eqref{eq:mean-curvature},
\begin{align}
\mathcal R_n^{(2)}(g_1\mid g_2)
&=
2^{n-2}\nu_{n-1}
\int_M\mathscr L_n(g_1\mid g_2)\,\dd\vol_{g_2}
-2\kappa_n
\int_{\partial M}
(K_{g_1}-K_{g_2})\,\dd\vol_{g^{\partial M}},
\label{eq:main-even}
\\
\mathcal R_n^{(1)}(g_1\mid g_2)
&=
2^{n-2}\nu_{n-1}
\int_M\mathscr L_n(g_1\mid g_2)\,\dd\vol_{g_2}
-\kappa_n
\int_{\partial M}
(K_{g_1}-K_{g_2})\,\dd\vol_{g^{\partial M}}.
\label{eq:main-odd}
\end{align}
Here $\mathscr L_n$ is the local scalar
\eqref{eq:local-density}.
\end{theorem}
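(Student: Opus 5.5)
The theorem follows by assembling results already established, so the plan is to combine the decomposition \eqref{eq:FGLS-split} with the interior identity \eqref{eq:boundary-interior} and the two boundary propositions. Then the factor $h_1'(0)-h_2'(0)$ is converted into mean curvatures using \eqref{eq:warped-mean-curvature}.

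First, for $s\in\{1,2\}$, I will write $\mathcal R_n^{(s)}$ as in \eqref{eq:FGLS-split}. Its interior term is the same for both splittings: $P_sQ_s=D_{g_1}^2D_{g_2}^{-n}$ modulo $\Psi^{-\infty}$, so \eqref{eq:boundary-interior} gives $2^{n-2}\nu_{n-1}\int_M\mathscr L_n(g_1\mid g_2)\dd\vol_{g_2}$. Here the pointwise form of Theorem~\ref{thm:local-closed} is used, which is local and therefore valid up to the boundary. No integration by parts is performed, so the divergence term stays inside $\mathscr L_n$.

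Second, I will insert the boundary coefficients. Proposition~\ref{prop:even-boundary} gives the contribution $\pi\nu_{n-2}\frac{(2m-1)!}{(m-2)!(m+1)!}\int_{\partial M}(h_1'(0)-h_2'(0))$ for $s=2$, and Proposition~\ref{prop:odd-boundary} gives exactly half of this for $s=1$. By \eqref{eq:warped-mean-curvature},
\[
h_1'(0)-h_2'(0)=-\frac{2}{n-1}(K_{g_1}-K_{g_2})
\]
pointwise on $\partial M$, where $n-1=2m-1$. Hence
\[
\pi\nu_{n-2}\frac{(2m-1)!}{(m-2)!(m+1)!}\bigl(h_1'(0)-h_2'(0)\bigr)
=-2\pi\nu_{n-2}\frac{(2m-2)!}{(m-2)!(m+1)!}(K_{g_1}-K_{g_2}).
\]
Since $(2m-2)!=(n-2)!$, this equals $-2\kappa_n(K_{g_1}-K_{g_2})$, which is \eqref{eq:main-even}. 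Halving the coefficient gives \eqref{eq:main-odd}.

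The argument itself carries no real difficulty. The one point to check is that the collar functions $h_r$ may differ between boundary components, and that the local coefficient $\Phi_s(x')$ was computed at an arbitrary $x_0\in\partial M$ in $g^{\partial M}$-normal coordinates. This makes the pointwise identity $\Phi_s=c_s\,(h_1'(0)-h_2'(0))$ globally valid, so integrating against $\dd\vol_{g^{\partial M}}$ is legitimate. I would also confirm that the constant $\kappa_n$ absorbs $(2m-1)$ correctly, and that the sign follows from the inward normal convention.
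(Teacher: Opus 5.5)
Your proposal is correct and follows essentially the same route as the paper. The interior term is identified via \eqref{eq:boundary-interior} for both splittings. The boundary terms from Propositions~\ref{prop:even-boundary} and \ref{prop:odd-boundary} are then converted using $h_1'(0)-h_2'(0)=-\frac{2}{n-1}(K_{g_1}-K_{g_2})$ and $(2m-1)!/(n-1)=(n-2)!$, giving $-2\kappa_n$ and $-\kappa_n$. Your remark that $\Phi_s$ holds pointwise at every boundary point, even when $h_r$ differs between boundary components, makes explicit a point the paper leaves implicit.
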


\begin{proof}
For $s=1,2$, the interior product satisfies
$P_sQ_s=D_{g_1}^2D_{g_2}^{-n}$ modulo smoothing operators.
Thus the interior contribution in
\eqref{eq:FGLS-split} is given by
\eqref{eq:boundary-interior}.

For the boundary contribution,
\eqref{eq:warped-mean-curvature} gives
\[
h_1'(0)-h_2'(0)
=
-\frac{2}{n-1}(K_{g_1}-K_{g_2}).
\]
Since $n=2m$,
\[
\frac{\pi\nu_{n-2}}{n-1}
\frac{(2m-1)!}{(m-2)!(m+1)!}
=
\pi\nu_{n-2}
\frac{(n-2)!}{(m-2)!(m+1)!}
=\kappa_n.
\]
Proposition~\ref{prop:even-boundary} therefore yields
\[
\int_{\partial M}\Phi_2\,\dd\vol_{g^{\partial M}}
=
-2\kappa_n
\int_{\partial M}
(K_{g_1}-K_{g_2})\,\dd\vol_{g^{\partial M}}.
\]
Proposition~\ref{prop:odd-boundary} gives
\[
\int_{\partial M}\Phi_1\,\dd\vol_{g^{\partial M}}
=
-\kappa_n
\int_{\partial M}
(K_{g_1}-K_{g_2})\,\dd\vol_{g^{\partial M}}.
\]
Substituting these expressions into
\eqref{eq:FGLS-split} proves
\eqref{eq:main-even} and \eqref{eq:main-odd}.
\end{proof}

\begin{corollary}
\label{cor:separation}
Under the hypotheses of
Theorem~\ref{thm:boundary-KKW},
\begin{align*}
2\mathcal R_n^{(1)}(g_1\mid g_2)
-\mathcal R_n^{(2)}(g_1\mid g_2)
&=
2^{n-2}\nu_{n-1}
\int_M\mathscr L_n(g_1\mid g_2)\,\dd\vol_{g_2},
\\
\mathcal R_n^{(2)}(g_1\mid g_2)
-\mathcal R_n^{(1)}(g_1\mid g_2)
&=
-\kappa_n
\int_{\partial M}
(K_{g_1}-K_{g_2})\,\dd\vol_{g^{\partial M}}.
\end{align*}
\end{corollary}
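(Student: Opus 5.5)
This corollary follows by direct linear combination of the two identities in Theorem~\ref{thm:boundary-KKW}. No new symbol computation is required. For brevity, write
\[
I=2^{n-2}\nu_{n-1}\int_M\mathscr L_n(g_1\mid g_2)\,\dd\vol_{g_2},
\qquad
B=\kappa_n\int_{\partial M}(K_{g_1}-K_{g_2})\,\dd\vol_{g^{\partial M}}.
\]
With this notation, \eqref{eq:main-even} reads $\mathcal R_n^{(2)}=I-2B$ and \eqref{eq:main-odd} reads $\mathcal R_n^{(1)}=I-B$. Both quantities are finite real numbers under the standing hypotheses: $n=2m\ge4$, the collar form \eqref{eq:collar}, and Riemannian $g_1,g_2$. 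Linear manipulations are therefore legitimate.

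For the first identity, we form $2\mathcal R_n^{(1)}-\mathcal R_n^{(2)}=2(I-B)-(I-2B)=I$. The boundary terms cancel precisely because the even coefficient is twice the odd one, as established in Propositions~\ref{prop:even-boundary} and \ref{prop:odd-boundary}.

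For the second identity, we subtract: $\mathcal R_n^{(2)}-\mathcal R_n^{(1)}=(I-2B)-(I-B)=-B$. The interior contributions cancel because both factorizations have the same interior product $P_sQ_s=D_{g_1}^2D_{g_2}^{-n}$ modulo $\Psi^{-\infty}$. This is \eqref{eq:boundary-interior}, including the retained divergence term of $\mathscr L_n$.

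There is no real obstacle; the only point needing care is bookkeeping. The same interior expression, with its divergence term kept, must appear in both formulas of Theorem~\ref{thm:boundary-KKW}. Likewise, the same constant $\kappa_n$ and the same inward-normal sign convention for $K_{g_r}$ must be used in both. Each of these is fixed once in the statement of that theorem.
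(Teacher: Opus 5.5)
Your proposal is correct and matches the paper's own proof. You form twice \eqref{eq:main-odd} minus \eqref{eq:main-even}, which cancels the boundary integrals, and \eqref{eq:main-even} minus \eqref{eq:main-odd}, which cancels the interior integrals; your bookkeeping with $I$ and $B$ just makes this explicit.
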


\begin{proof}
Subtracting \eqref{eq:main-even} from twice
\eqref{eq:main-odd} cancels the boundary integrals
and gives the first identity. Subtracting
\eqref{eq:main-odd} from \eqref{eq:main-even}
cancels the interior integrals and gives the second.
\end{proof}

\begin{corollary}
\label{cor:first-order-boundary}
Under the hypotheses of
Theorem~\ref{thm:boundary-KKW}, if
$h_1'(0)=h_2'(0)$ on every boundary component, then
the singular Green contribution vanishes for both
splittings. In particular, if $g_1=g_2=g$ on $M$, then
\[
\mathcal R_n^{(2)}(g\mid g)
=
\mathcal R_n^{(1)}(g\mid g)
=
-\frac{(n-2)2^n}{24}\nu_{n-1}
\int_M R_g\,\dd\vol_g.
\]
\end{corollary}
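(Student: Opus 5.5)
The plan is to specialize Theorem~\ref{thm:boundary-KKW} and then evaluate the interior term through the diagonal computation already done for closed manifolds. The proof has three steps.

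First, the hypothesis $h_1'(0)=h_2'(0)$ on every boundary component. By \eqref{eq:warped-mean-curvature}, this is equivalent to $K_{g_1}=K_{g_2}$ pointwise on $\partial M$. Hence both integrals $\int_{\partial M}(K_{g_1}-K_{g_2})\dd\vol_{g^{\partial M}}$ in \eqref{eq:main-even} and \eqref{eq:main-odd} vanish. One can see this more directly in Propositions~\ref{prop:even-boundary} and \ref{prop:odd-boundary}: every nonzero case contribution $\Phi_2^{(2)}$, $\Phi_2^{(1)}$, $\Phi_4^{(1)}$ carries the factor $h_1'(0)-h_2'(0)$, so $\Phi_1=\Phi_2=0$ pointwise. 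This gives the first assertion. Both $\mathcal R_n^{(s)}$ then reduce to the interior term $2^{n-2}\nu_{n-1}\int_M\mathscr L_n(g_1\mid g_2)\dd\vol_{g_2}$.

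Second, the diagonal case $g_1=g_2=g$. Here we may take $h_1=h_2$ on the collar, so the first step applies. The connection difference vanishes identically, $C=0$ and $c=0$. The divergence term $2g_1^{ij}\nabla_i^{g_2}c_j$ in \eqref{eq:local-density}, which was deliberately retained in \eqref{eq:boundary-interior} because $M$ has boundary, is therefore zero pointwise. No boundary term arises from integration by parts. Pointwise we get
\[
\mathscr L_n(g\mid g)=R_g-\tfrac23R_g-\tfrac n6R_g=-\tfrac{n-2}{6}R_g,
\]
exactly as in the proof of Corollary~\ref{cor:diagonal}.

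Third, the constant. We have $2^{n-2}(n-2)/6=(n-2)2^n/24$, which gives the displayed value for both $\mathcal R_n^{(2)}(g\mid g)$ and $\mathcal R_n^{(1)}(g\mid g)$.

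The only point needing care is that the interior density is used pointwise rather than after integration. The local form of Proposition~\ref{prop:universal}, justified after its statement by the cutoff argument, gives this pointwise identity, so the boundary of $M$ causes no loss. There is no genuine obstacle beyond this.
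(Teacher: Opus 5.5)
Your proposal is correct and matches the paper's proof: \eqref{eq:warped-mean-curvature} turns $h_1'(0)=h_2'(0)$ into $K_{g_1}=K_{g_2}$, which kills both boundary terms in Theorem~\ref{thm:boundary-KKW}. For $g_1=g_2$, the vanishing of $C$ and $c$ reduces $\mathscr L_n$ to $-\frac{n-2}{6}R_g$, and $2^{n-2}(n-2)/6=(n-2)2^n/24$ gives the constant. One small wording point: in the common collar coordinates, $g_1=g_2$ forces $h_1=h_2$, so this is automatic rather than something you ``may take''.
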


\begin{proof}
The derivative condition implies
$K_{g_1}=K_{g_2}$ by
\eqref{eq:warped-mean-curvature}, so both boundary
terms in Theorem~\ref{thm:boundary-KKW} vanish.
If $g_1=g_2=g$, then $C=0$ and $c_i=0$.
Since $g^{ij}\Ric(g)_{ij}=R_g$ and
$g_{ij}g^{ij}=n$, the local scalar becomes
\[
\mathscr L_n(g\mid g)
=
\left(1-\frac23-\frac n6\right)R_g
=
-\frac{n-2}{6}R_g.
\]
Substitution into either formula of
Theorem~\ref{thm:boundary-KKW} gives the result.
\end{proof}

For an explicit example, suppose that $\partial M$ is
connected. Let $a_1,a_2$ be real constants and take
$g_r=e^{2a_rx_n}g^{\partial M}+dx_n^2$ in the collar.
Then $h_r=e^{-2a_rx_n}$, so
$h_r'(0)=-2a_r$ and $K_{g_r}=(n-1)a_r$.
The two boundary contributions are
\begin{align*}
\int_{\partial M}\Phi_2\,\dd\vol_{g^{\partial M}}
&=
-2\kappa_n(n-1)(a_1-a_2)
\operatorname{vol}_{g^{\partial M}}(\partial M),
\\
\int_{\partial M}\Phi_1\,\dd\vol_{g^{\partial M}}
&=
-\kappa_n(n-1)(a_1-a_2)
\operatorname{vol}_{g^{\partial M}}(\partial M).
\end{align*}
They are nonzero when $a_1\ne a_2$, although the two
metrics induce the same boundary metric $g^{\partial M}$.
The prescribed collar metrics extend to smooth
Riemannian metrics on $M$ without changing these
boundary contributions.

\appendix
\section{Mixed Clifford traces}
\label{app:mixed-Clifford-traces}

In this appendix, we give the mixed two-factor, four-factor, and six-factor traces
and the recursion from which they follow.
Fix $x\in M$, let $r_a\in\{1,2\}$, and let $X_a\in T_xM$.
All traces are ordinary traces on
$E_x=\Lambda^*T_x^*M\otimes\mathbb C$, whose dimension is $2^n$.
The canonical anticommutation relations give
\begin{equation}
\label{eq:mixed-anticommutator}
\begin{aligned}
&\cG{r_a}(X_a)\cG{r_b}(X_b)
+\cG{r_b}(X_b)\cG{r_a}(X_a)
=-\bigl[
g_{r_a}(X_a,X_b)+g_{r_b}(X_a,X_b)
\bigr]\Id_E.
\end{aligned}
\end{equation}

\begin{proposition}[Direct even-factor recursion]
\label{prop:mixed-recursion}
For every integer $q\geq1$,
\begin{align}
&\trE\!\left[
\cG{r_1}(X_1)\cdots\cG{r_{2q}}(X_{2q})
\right]
\notag\\
&\quad=
\sum_{s=2}^{2q}\frac{(-1)^{s+1}}2
\bigl[
g_{r_1}(X_1,X_s)+g_{r_s}(X_1,X_s)
\bigr]
\trE\!\left[
\cG{r_2}(X_2)\cdots
\widehat{\cG{r_s}(X_s)}
\cdots\cG{r_{2q}}(X_{2q})
\right].
\label{eq:mixed-recursion}
\end{align}
The hat denotes omission. An empty product is understood as
$\Id_E$, with $\trE(\Id_E)=2^n$.
Every product of an odd number of Clifford factors has zero trace.
\end{proposition}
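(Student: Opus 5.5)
The plan is the standard anticommute-and-cycle argument, carried out with the mixed anticommutator \eqref{eq:mixed-anticommutator} in place of the single-metric Clifford relation. Write $A_a=\cG{r_a}(X_a)$ for brevity, and $\beta_{ab}=g_{r_a}(X_a,X_b)+g_{r_b}(X_a,X_b)$. By \eqref{eq:mixed-anticommutator}, $A_aA_b=-A_bA_a-\beta_{ab}\Id_E$. Starting from $\trE[A_1A_2\cdots A_{2q}]$, I will move $A_1$ successively past $A_2,\dots,A_{2q}$. Each transposition with $A_s$ produces a sign and a term in which both $A_1$ and $A_s$ are deleted. By induction on the number of moves,
\[
A_1A_2\cdots A_{2q}
=
\sum_{s=2}^{2q}(-1)^{s-1}\bigl(-\beta_{1s}\bigr)
A_2\cdots\widehat{A_s}\cdots A_{2q}
+(-1)^{2q-1}A_2\cdots A_{2q}A_1 .
\]
To fix the signs: before the swap with $A_s$, the factor $A_1$ has already passed $s-2$ factors, so it carries the sign $(-1)^{s-2}$. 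The swap then contributes $-\beta_{1s}$ times the product with $A_1$ and $A_s$ omitted, giving the coefficient $(-1)^{s-2}(-\beta_{1s})=(-1)^{s+1}\beta_{1s}$ up to one overall sign. I will check this carefully against the claimed $\frac{(-1)^{s+1}}{2}\beta_{1s}$.

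Next I take $\trE$ and use cyclicity: $\trE[A_2\cdots A_{2q}A_1]=\trE[A_1A_2\cdots A_{2q}]$. Since $(-1)^{2q-1}=-1$, the last term moves to the left-hand side, giving $2\trE[A_1\cdots A_{2q}]$. Dividing by $2$ yields \eqref{eq:mixed-recursion}, after the sign bookkeeping.

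The main point to verify is that overall sign. As a check I will use the case $q=1$, where the first identity of Lemma~\ref{lem:single-metric-traces} must be recovered for $r_1=r_2$. With $r_1=r_2=r$, the displayed identity gives $A_1A_2=-A_2A_1-\beta_{12}$. The trace is then $2\trE[A_1A_2]=-\beta_{12}2^n$, so $\trE[A_1A_2]=-2^n g_r(X_1,X_2)$. In the recursion, with $s=2$, the term is $\frac{(-1)^{3}}{2}\beta_{12}\cdot2^n=-2^ng_r(X_1,X_2)$, which agrees. I therefore expect the correct general coefficient to be $(-1)^{s+1}\cdot\frac12\beta_{1s}$ with $(-1)^{s+1}=-(-1)^{s}$, i.e.\ $-\frac12(-1)^{s}\beta_{1s}$. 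I will recheck the general sign by tracking that passing $A_1$ over $s-2$ factors gives $(-1)^{s-2}=(-1)^s$, multiplied by $-\beta_{1s}$.

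For the odd-factor claim I will use the grading operator $\Gamma=(-1)^p$ on $\Lambda^p$. Each $\cG{r}(X)=\eps(X^\flat)-\iotaop(X)$ changes the form degree by $\pm1$, so $\Gamma A\Gamma^{-1}=-A$ for every factor. For an odd product $B$ this gives $\Gamma B\Gamma^{-1}=-B$, hence $\trE B=-\trE B$ by cyclicity, so $\trE B=0$. Equivalently, $B$ maps $\Lambda^{\mathrm{even}}$ to $\Lambda^{\mathrm{odd}}$ and vice versa, so all its diagonal entries vanish. The empty-product convention follows from $\trE(\Id_E)=2^n$.
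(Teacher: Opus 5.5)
Your proposal is correct and follows the paper's proof essentially step for step: you anticommute $\cG{r_1}(X_1)$ through the other factors using \eqref{eq:mixed-anticommutator}, use cyclicity to move the leftover term $-\trE[\cG{r_2}(X_2)\cdots\cG{r_{2q}}(X_{2q})\cG{r_1}(X_1)]$ to the left-hand side and divide by two, and dispose of odd products via the even/odd degree splitting of $E_x$. The only slip is in your first display, whose coefficient should be $(-1)^{s-2}(-\beta_{1s})=(-1)^{s+1}\beta_{1s}$ rather than $(-1)^{s-1}(-\beta_{1s})$; your subsequent sign tracking and your $q=1$ check already give the correct sign.
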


\begin{proof}
For this proof, write $\mathfrak c_a=\cG{r_a}(X_a)$.
Equation~\eqref{eq:mixed-anticommutator} gives
\[
\mathfrak c_1\mathfrak c_s
=
-\mathfrak c_s\mathfrak c_1
-\bigl[
g_{r_1}(X_1,X_s)+g_{r_s}(X_1,X_s)
\bigr]\Id_E.
\]
Moving $\mathfrak c_1$ successively through
$\mathfrak c_2,\ldots,\mathfrak c_{2q}$ yields
\begin{align*}
\mathfrak c_1\cdots\mathfrak c_{2q}
={}&
\sum_{s=2}^{2q}(-1)^{s-1}
\bigl[
g_{r_1}(X_1,X_s)+g_{r_s}(X_1,X_s)
\bigr]
\mathfrak c_2\cdots
\widehat{\mathfrak c_s}\cdots
\mathfrak c_{2q}-\mathfrak c_2\cdots\mathfrak c_{2q}\mathfrak c_1.
\end{align*}
Indeed, the contraction with $\mathfrak c_s$ occurs after
$s-2$ interchanges and therefore has sign
$-(-1)^{s-2}=(-1)^{s-1}$.
The last term has sign $(-1)^{2q-1}=-1$.

By cyclicity,
\[
\trE\!\left(
\mathfrak c_2\cdots\mathfrak c_{2q}\mathfrak c_1
\right)
=
\trE\!\left(
\mathfrak c_1\cdots\mathfrak c_{2q}
\right).
\]
Taking the trace of the preceding operator identity, moving the
last trace to the left, and dividing by two proves
\eqref{eq:mixed-recursion}, since
$(-1)^{s-1}=(-1)^{s+1}$.

Finally, let $E_x^{\mathrm{ev}}$ and $E_x^{\mathrm{odd}}$ be the
sums of the even- and odd-degree exterior powers in $E_x$.
Each Clifford factor interchanges these two subspaces.
An odd product is therefore off diagonal with respect to
$E_x=E_x^{\mathrm{ev}}\oplus E_x^{\mathrm{odd}}$
and has zero trace.
\end{proof}

\begin{proposition}[Mixed traces with two, four, and six factors]
\label{prop:mixed-246}
The two-factor trace is
\begin{equation}
\label{eq:mixed-two}
\begin{aligned}
&\trE\!\left[
\cG{r_1}(X_1)\cG{r_2}(X_2)
\right]
=-\frac{\trE(\Id_E)}2
\bigl[
g_{r_1}(X_1,X_2)+g_{r_2}(X_1,X_2)
\bigr].
\end{aligned}
\end{equation}
The four-factor trace is
\begin{align}
&\trE\!\left[
\cG{r_1}(X_1)\cG{r_2}(X_2)
\cG{r_3}(X_3)\cG{r_4}(X_4)
\right]
\notag\\
&=\frac{\trE(\Id_E)}4\Bigl\{
\notag\\
&\quad
\bigl[g_{r_1}(X_1,X_2)+g_{r_2}(X_1,X_2)\bigr]
\bigl[g_{r_3}(X_3,X_4)+g_{r_4}(X_3,X_4)\bigr]
\notag\\
&\quad-
\bigl[g_{r_1}(X_1,X_3)+g_{r_3}(X_1,X_3)\bigr]
\bigl[g_{r_2}(X_2,X_4)+g_{r_4}(X_2,X_4)\bigr]
\notag\\
&\quad+
\bigl[g_{r_1}(X_1,X_4)+g_{r_4}(X_1,X_4)\bigr]
\bigl[g_{r_2}(X_2,X_3)+g_{r_3}(X_2,X_3)\bigr]
\Bigr\}.
\label{eq:mixed-four}
\end{align}
The six-factor trace is
\begingroup
\allowdisplaybreaks[1]
\begin{align}
&\trE\!\left[
\cG{r_1}(X_1)\cG{r_2}(X_2)\cG{r_3}(X_3)
\cG{r_4}(X_4)\cG{r_5}(X_5)\cG{r_6}(X_6)
\right]
\notag\\*
&=\frac{\trE(\Id_E)}8\Bigl\{
\notag\\*
&\quad-
\bigl[g_{r_1}(X_1,X_2)+g_{r_2}(X_1,X_2)\bigr]
\bigl[g_{r_3}(X_3,X_4)+g_{r_4}(X_3,X_4)\bigr]
\bigl[g_{r_5}(X_5,X_6)+g_{r_6}(X_5,X_6)\bigr]
\notag\\
&\quad+
\bigl[g_{r_1}(X_1,X_2)+g_{r_2}(X_1,X_2)\bigr]
\bigl[g_{r_3}(X_3,X_5)+g_{r_5}(X_3,X_5)\bigr]
\bigl[g_{r_4}(X_4,X_6)+g_{r_6}(X_4,X_6)\bigr]
\notag\\
&\quad-
\bigl[g_{r_1}(X_1,X_2)+g_{r_2}(X_1,X_2)\bigr]
\bigl[g_{r_3}(X_3,X_6)+g_{r_6}(X_3,X_6)\bigr]
\bigl[g_{r_4}(X_4,X_5)+g_{r_5}(X_4,X_5)\bigr]
\notag\\
&\quad+
\bigl[g_{r_1}(X_1,X_3)+g_{r_3}(X_1,X_3)\bigr]
\bigl[g_{r_2}(X_2,X_4)+g_{r_4}(X_2,X_4)\bigr]
\bigl[g_{r_5}(X_5,X_6)+g_{r_6}(X_5,X_6)\bigr]
\notag\\
&\quad-
\bigl[g_{r_1}(X_1,X_3)+g_{r_3}(X_1,X_3)\bigr]
\bigl[g_{r_2}(X_2,X_5)+g_{r_5}(X_2,X_5)\bigr]
\bigl[g_{r_4}(X_4,X_6)+g_{r_6}(X_4,X_6)\bigr]
\notag\\
&\quad+
\bigl[g_{r_1}(X_1,X_3)+g_{r_3}(X_1,X_3)\bigr]
\bigl[g_{r_2}(X_2,X_6)+g_{r_6}(X_2,X_6)\bigr]
\bigl[g_{r_4}(X_4,X_5)+g_{r_5}(X_4,X_5)\bigr]
\notag\\
&\quad-
\bigl[g_{r_1}(X_1,X_4)+g_{r_4}(X_1,X_4)\bigr]
\bigl[g_{r_2}(X_2,X_3)+g_{r_3}(X_2,X_3)\bigr]
\bigl[g_{r_5}(X_5,X_6)+g_{r_6}(X_5,X_6)\bigr]
\notag\\
&\quad+
\bigl[g_{r_1}(X_1,X_4)+g_{r_4}(X_1,X_4)\bigr]
\bigl[g_{r_2}(X_2,X_5)+g_{r_5}(X_2,X_5)\bigr]
\bigl[g_{r_3}(X_3,X_6)+g_{r_6}(X_3,X_6)\bigr]
\notag\\
&\quad-
\bigl[g_{r_1}(X_1,X_4)+g_{r_4}(X_1,X_4)\bigr]
\bigl[g_{r_2}(X_2,X_6)+g_{r_6}(X_2,X_6)\bigr]
\bigl[g_{r_3}(X_3,X_5)+g_{r_5}(X_3,X_5)\bigr]
\notag\\
&\quad+
\bigl[g_{r_1}(X_1,X_5)+g_{r_5}(X_1,X_5)\bigr]
\bigl[g_{r_2}(X_2,X_3)+g_{r_3}(X_2,X_3)\bigr]
\bigl[g_{r_4}(X_4,X_6)+g_{r_6}(X_4,X_6)\bigr]
\notag\\
&\quad-
\bigl[g_{r_1}(X_1,X_5)+g_{r_5}(X_1,X_5)\bigr]
\bigl[g_{r_2}(X_2,X_4)+g_{r_4}(X_2,X_4)\bigr]
\bigl[g_{r_3}(X_3,X_6)+g_{r_6}(X_3,X_6)\bigr]
\notag\\
&\quad+
\bigl[g_{r_1}(X_1,X_5)+g_{r_5}(X_1,X_5)\bigr]
\bigl[g_{r_2}(X_2,X_6)+g_{r_6}(X_2,X_6)\bigr]
\bigl[g_{r_3}(X_3,X_4)+g_{r_4}(X_3,X_4)\bigr]
\notag\\
&\quad-
\bigl[g_{r_1}(X_1,X_6)+g_{r_6}(X_1,X_6)\bigr]
\bigl[g_{r_2}(X_2,X_3)+g_{r_3}(X_2,X_3)\bigr]
\bigl[g_{r_4}(X_4,X_5)+g_{r_5}(X_4,X_5)\bigr]
\notag\\
&\quad+
\bigl[g_{r_1}(X_1,X_6)+g_{r_6}(X_1,X_6)\bigr]
\bigl[g_{r_2}(X_2,X_4)+g_{r_4}(X_2,X_4)\bigr]
\bigl[g_{r_3}(X_3,X_5)+g_{r_5}(X_3,X_5)\bigr]
\notag\\
&\quad-
\bigl[g_{r_1}(X_1,X_6)+g_{r_6}(X_1,X_6)\bigr]
\bigl[g_{r_2}(X_2,X_5)+g_{r_5}(X_2,X_5)\bigr]
\bigl[g_{r_3}(X_3,X_4)+g_{r_4}(X_3,X_4)\bigr]
\Bigr\}.
\label{eq:mixed-six}
\end{align}
\endgroup
\end{proposition}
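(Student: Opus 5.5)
The plan is to obtain all three formulas by iterating the recursion of Proposition~\ref{prop:mixed-recursion}. To keep the bookkeeping readable, set
\[
\gamma_{ab}=g_{r_a}(X_a,X_b)+g_{r_b}(X_a,X_b),
\]
which is symmetric in $a,b$. With this notation the recursion reads
\[
\trE[\mathfrak c_1\cdots\mathfrak c_{2q}]=\sum_{s\ge2}\tfrac{(-1)^{s+1}}{2}\gamma_{1s}\,\trE[\cdots\widehat{\mathfrak c_s}\cdots].
\]

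\textbf{Two factors.} Take $q=1$. Only $s=2$ occurs, with sign $(-1)^3=-1$, and the remaining trace is $\trE(\Id_E)$. This gives \eqref{eq:mixed-two} immediately, namely $-\tfrac12\gamma_{12}\trE(\Id_E)$.

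\textbf{Four factors.} Take $q=2$. The recursion gives $s=2,3,4$ with signs $-,+,-$, each multiplied by a two-factor trace of the remaining pair, in increasing index order. Substituting \eqref{eq:mixed-two} for each of these produces an extra factor $-\tfrac12\gamma$. The result is
\[
\tfrac14\trE(\Id_E)\bigl(\gamma_{12}\gamma_{34}-\gamma_{13}\gamma_{24}+\gamma_{14}\gamma_{23}\bigr),
\]
which is \eqref{eq:mixed-four}. The only subtlety is that the remaining factors keep their original order. This matters because the $\gamma$'s are not order-sensitive but the signs in the recursion are.

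\textbf{Six factors.} Take $q=3$. Apply the recursion once, giving $s=2,\dots,6$ with sign $\tfrac{(-1)^{s+1}}{2}$. Each remaining four-factor trace, with indices relabeled in increasing order, is then replaced by \eqref{eq:mixed-four}. A general sign rule for this step: if the omitted index is $s$ and the four survivors are $i<j<k<l$, the contribution is
\[
\tfrac{(-1)^{s+1}}{2}\cdot\tfrac14\gamma_{1s}\bigl(\gamma_{ij}\gamma_{kl}-\gamma_{ik}\gamma_{jl}+\gamma_{il}\gamma_{jk}\bigr)\trE(\Id_E).
\]
Expanding over $s=2,\dots,6$ yields exactly fifteen terms, one for each perfect matching of $\{1,\dots,6\}$, with overall factor $\tfrac18\trE(\Id_E)$.

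\textbf{Sign check.} The main obstacle is checking that these fifteen signs agree with the listed ones. I would verify them using the observation that the resulting sign equals $-1$ times the sign of the matching, i.e.\ $(-1)$ times $(-1)^{\text{number of crossings}}$. For example:
\begin{itemize}
\item $s=2$, survivors $3456$: signs $-,+,-$, matching the first three lines.
\item $s=3$, survivors $2456$: signs $+,-,+$.
\item $s=4$, survivors $2356$: signs $-,+,-$.
\item $s=5$, survivors $2346$: signs $+,-,+$.
\item $s=6$, survivors $2345$: signs $-,+,-$.
\end{itemize}
These reproduce \eqref{eq:mixed-six} line by line.

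\textbf{Supporting identities.} Two inputs are needed. First, the anticommutator \eqref{eq:mixed-anticommutator} should be rechecked from \eqref{eq:CAR}. Since $\eps$ and $\iotaop$ anticommute to the pairing,
\[
\cG{r_a}(X_a)\cG{r_b}(X_b)+\cG{r_b}(X_b)\cG{r_a}(X_a)=-\bigl[g_{r_b}(X_b,X_a)+g_{r_a}(X_a,X_b)\bigr]\Id_E,
\]
and symmetry of each metric gives the stated form. Second, the vanishing of odd traces is already part of Proposition~\ref{prop:mixed-recursion}, so no separate argument is needed for it here.
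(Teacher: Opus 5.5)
Your proposal is correct and matches the paper's proof: both iterate Proposition~\ref{prop:mixed-recursion}, substituting \eqref{eq:mixed-two} into the four-factor step and \eqref{eq:mixed-four} into the six-factor step, which yields the same sign groups $(-,+,-),(+,-,+),(-,+,-),(+,-,+),(-,+,-)$. Your $q=1$ case of the recursion is the paper's direct anticommutator-plus-cyclicity argument, and your rule (overall sign $-1$ times $(-1)$ to the number of crossings of the matching) is a useful extra check, consistent with all fifteen listed terms.
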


\begin{proof}
For two factors, taking the trace of
\eqref{eq:mixed-anticommutator} and using cyclicity gives
\[
2\trE\!\left[
\cG{r_1}(X_1)\cG{r_2}(X_2)
\right]
=
-\bigl[
g_{r_1}(X_1,X_2)+g_{r_2}(X_1,X_2)
\bigr]\trE(\Id_E),
\]
which proves \eqref{eq:mixed-two}.

For four factors, apply
Proposition~\ref{prop:mixed-recursion} with $q=2$.
Pairing the first factor with the second, third, and fourth
gives coefficients $-\frac12$, $\frac12$, and $-\frac12$,
respectively. Each remaining two-factor trace contributes the
factor $-\frac12\trE(\Id_E)$ by \eqref{eq:mixed-two}.
Thus the three coefficients are
$\frac14\trE(\Id_E)$,
$-\frac14\trE(\Id_E)$, and
$\frac14\trE(\Id_E)$.
Their respective pairings are
$\{(1,2),(3,4)\}$,
$\{(1,3),(2,4)\}$, and
$\{(1,4),(2,3)\}$.
This gives \eqref{eq:mixed-four}.

For six factors, the recursion with $q=3$ pairs the first factor
successively with the factors $2,3,4,5,6$.
The corresponding coefficients are
$-\frac12$, $\frac12$, $-\frac12$, $\frac12$, and $-\frac12$.
For each choice, \eqref{eq:mixed-four} supplies three terms with
signs $+,-,+$ and common factor $\trE(\Id_E)/4$.
Consequently, the five groups have sign patterns
\[
(-,+,-),\qquad
(+,-,+),\qquad
(-,+,-),\qquad
(+,-,+),\qquad
(-,+,-),
\]
with common factor $\trE(\Id_E)/8$.
Each pairing has a unique partner for the first index, and the
remaining four indices admit precisely three pairings.
These five groups therefore contain all fifteen terms, without
repetition, and give \eqref{eq:mixed-six}.
\end{proof}

\section*{Acknowledgements}

This work was supported by Science and Technology Development Plan Project of Jilin Province,
China: No.20260102245JC, NSFC (Grant Nos. 12301063 and 11771070) and 2024 Liaoning Provincial Natural Science Foundation Program (Ph.D. Research Start-up Project) (Grant No. 2024-BS-205).
\begingroup
\sloppy

\endgroup

\end{document}